\newcommand{\Hom}{{\rm Hom}}
\newcommand{\kk}{\textbf{k}}
\renewcommand{\mod}{{\rm mod}\text{-}}
\newcommand{\sstop}{{\rm s\text{-}top}}
\newcommand{\ssrad}{{\rm s\text{-}rad}}
\newcommand{\rmod}{{\rm mod} \text{-}}
\newcommand{\smod}{{\rm\underline{mod}} \text{-}}
\newcommand{\DTr}{{\rm DTr}}
\newcommand{\soc}{{\rm soc}}
\newcommand{\rad}{{\rm rad}}
\newcommand{\Imm}{{\rm Im}}
\newcommand{\Ker}{{\rm Ker}}
\newcommand{\Coker}{{\rm Coker}}
\newcommand{\Cone}{{\rm Cone}}
\newcommand{\topp}{{\rm top}}
\newcommand{\Ends}{{\rm\underline{End}}}
\newcommand{\Homs}{{\rm\underline{Hom}}}
\newcommand{\Ext}{{\rm Ext}}
\newcommand{\rn}{\romannumeral }
\newcommand{\scl}{{\rm sc}}
\newtheorem{cor}{Corollary}
\newtheorem{lem}{Lemma}
\newtheorem{rem}{Remark}
\newtheorem{opr}{Definition}
\newtheorem{thm}{Theorem}
\newtheorem{prop}{Proposition}
\begin{document}

\title{On stably biserial algebras and the Auslander-Reiten conjecture for special biserial algebras}
\author{Mikhail Antipov\thanks{Mikhail Antipov was partially supported by grant NSh-9721.2016.1 of the President of the Russian Federation.} \hspace{1pt} and Alexandra Zvonareva\thanks{Alexandra Zvonareva was supported by the RFFI Grants 16-31-60089 and 16-31-00004.}}
\date{}
\maketitle

\begin{abstract}
By a result claimed by Pogorza\l{}y selfinjective special biserial
algebras can be stably equivalent only to stably biserial algebras
and these two classes coincide. By an example of Ariki, Iijima and
Park the classes of stably biserial and selfinjective special
biserial algebras do not coincide. In these notes we provide a
detailed proof of the fact that a selfinjective special biserial
algebra can be stably equivalent only to a stably biserial algebra
following some ideas from the paper by Pogorza\l{}y. We will analyse
the structure of symmetric stably biserial algebras and show that in
characteristic $\neq 2$ the classes of symmetric special biserial
(Brauer graph) algebras and symmetric stably biserial algebras
indeed coincide. Also, we provide a proof of the Auslander-Reiten conjecture for special biserial algebras. \end{abstract}

\section{Introduction}
Derived equivalences of symmetric special biserial or equivalently
Brauer graph algebras \cite{Sch} have been extensively studied over
the past few years \cite{AntD, Kau, RZ, RS, Zvo2, VZ, IMA, Zim, MS,
AZ, Zvo1, SZI, Aih, AAC}. These studies concern mainly attempts to
classify symmetric special biserial algebras up to derived
equivalence, classification of special tilting complexes over such
algebras or computation of the derived Picard groups. It is
well know that the class of symmetric special biserial algebras of
finite representation type is closed under derived equivalence. The
fact that the class of symmetric special biserial algebras is closed
under derived equivalence followed from the results of Pogorza\l{}y
\cite{Pog}. Unfortunately, in \cite{AIP} counterexamples for some of
the statements of \cite{Pog} were given.

In this paper we reprove the fact that if a selfinjective
algebra (not isomorphic to the Nakayama algebra with $\rad^2=0$) is stably equivalent to a selfinjective special biserial
algebra, then it is stably biserial. We do not use the original
approach of Pogorza\l{}y via Galois coverings, instead we perform all
combinatorial computations directly. We give a proof of the Auslander-Reiten conjecture for special biserial algebras using the reduction to the selfinjective case obtained by Mart\'inez-Villa. The conjecture states that the number of non-isomorphic non-projective simple modules is invariant under stable equivalence. The proof for selfinjective special biserial algebras in more involved, since we have to consider systems of orthogonal stable bricks over stably biserial algebras. After that we describe all
symmetric stably biserial algebras, showing that in characteristic
$\neq 2$ this class indeed coincides with the class of symmetric
special biserial algebras. This is the first step towards the proof
of the fact that the class of symmetric special biserial algebras is
closed under derived equivalence.

\section{Preliminaries}

Throughout this paper $A$ is a basic, connected, finite dimensional
algebra over an algebraically closed field $\kk$, $\rmod A$ is the
category of finite-dimensional right $A$-modules, $\smod A$ is the stable category of
$\rmod A$, i.e. the category of modules modulo the maps factoring through projective modules. In the case where $A$ is selfinjective the category $\smod A$ is triangulated. The Auslander-Reiten translation $\DTr$ will be denoted by $\tau$, the $\Hom$-spaces in $\smod A$ will be denoted by
$\Homs$, for $f \in \rmod
A$ its class in $\smod A$ will be denoted by $\underline{f}$, the syzygy or the Heller's
loop functor will be denoted by $\Omega:
\smod A \rightarrow \smod A$. A module will be called local, if it is an epimorphic
image of an indecomposable projective module.

\begin{opr}\label{stb} Let $Q$ be a quiver, $I$ an admissible ideal of $\kk Q$. A
selfinjective algebra $A'$ is called stably biserial if it is isomorphic to $A = \kk Q/I$, where $Q$ and $I$ satisfy the following conditions:

(a) For each vertex $i \in Q$, the number of outgoing arrows and the
number of incoming arrows are less than or equal to 2;

(b) For each arrow $\alpha \in Q$, there is at most one arrow $\beta
\in Q$ that satisfies $\alpha \beta \not\in \alpha \rad (A) \beta +
\soc (A)$;

(c) For each arrow $\alpha \in Q$, there is at most one arrow $\beta
\in Q$ that satisfies $\beta\alpha  \not\in  \beta\rad (A) \alpha +
\soc (A)$.
\end{opr}

The following description of stably biserial algebras was provided
in \cite{AIP}:

\begin{prop}[Proposition 7.5 \cite{AIP}] If $A$ is stably biserial then there exists a presentation of $A  \simeq \kk Q/I$ such that the following
conditions hold:

(1) If $\alpha\beta \neq 0$, $\alpha\gamma \neq 0$, $\beta \neq
\gamma$, for arrows $\alpha, \beta, \gamma$ then either $\alpha\beta
\in \soc (A)$ or $\alpha\gamma \in \soc (A)$;

(2) If $\beta\alpha \neq 0$, $\gamma\alpha \neq 0$, $\beta \neq
\gamma$, for arrows $\alpha, \beta, \gamma$ then either $\beta\alpha
\in \soc (A)$ or $\gamma\alpha \in \soc (A)$.
\end{prop}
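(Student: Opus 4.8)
The plan is to start from a presentation $A \simeq \kk Q/I$ satisfying conditions (a)--(c) of Definition \ref{stb} and to correct the arrows, one radical layer at a time, so as to force the ``inessential'' length-two paths into the socle. The basic tool is the standard fact that replacing an arrow $\gamma$ by any element $\gamma' = \gamma + x$ with $x \in \rad^2(A)$, while leaving the remaining arrows and the vertices unchanged, yields another admissible presentation $A \simeq \kk Q/I'$: the new generators still span a lift of a basis of $\rad(A)/\rad^2(A)$, so the induced surjection $\kk Q \to A$ has admissible kernel $I'$. In this new presentation the image of a path is computed as the product of the modified elements in $A$, and the socle is intrinsic, so ``$\alpha\beta \in \soc(A)$'' is a meaningful condition on the presentation. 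The point is that the correction $x$ lies in a strictly higher radical layer than $\gamma$, which is exactly what will drive the procedure to terminate.

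For the local step, fix an arrow $\alpha\colon u \to v$ together with the two arrows $\beta,\gamma$ starting at $v$ that may follow it, and suppose $\alpha\beta \neq 0 \neq \alpha\gamma$. Since $\alpha\,\rad(A)\,\beta \subseteq \rad^3(A)$, condition (b) asserts that at most one of $\beta,\gamma$ yields a product lying outside $\alpha\,\rad(A)\,(-) + \soc(A)$; relabelling, we may assume $\alpha\gamma = \alpha r\gamma + s$ with $r \in e_v\,\rad(A)\,e_v$ (absorbing $e_v$ on both sides) and $s \in \soc(A)$. Replacing $\gamma$ by $\gamma' := (e_v - r)\gamma$, which differs from $\gamma$ by $-r\gamma \in \rad^2(A)$ and so is a legitimate new arrow, gives $\alpha\gamma' = \alpha\gamma - \alpha r\gamma = s \in \soc(A)$, and the pair $(\alpha,\gamma')$ now satisfies the conclusion of (1). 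The argument for (2) is strictly dual: for an arrow $\alpha\colon v \to w$ preceded by two arrows $\beta,\gamma$ with $\beta\alpha \neq 0 \neq \gamma\alpha$, condition (c) lets one write $\gamma\alpha = \gamma r'\alpha + s'$ with $r' \in e_v\,\rad(A)\,e_v$ and $s' \in \soc(A)$, and replacing $\gamma$ by $\gamma(e_v - r')$ pushes $\gamma\alpha$ into $\soc(A)$. Thus (1) is corrected by modifying a successor at its source, and (2) by modifying a predecessor at its target.

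The remaining, and principal, difficulty is to carry out these local corrections globally and simultaneously for (1) and (2) without the repair of one relation undoing another. A fix for (1) rescales an out-arrow of $v$ by a unit on the left, while a fix for (2) rescales an in-arrow of $v$ by a unit on the right; since every arrow is an out-arrow at its source and an in-arrow at its target, a single arrow may be altered from both ends, and changing $\gamma$ perturbs every other product in which $\gamma$ occurs. I would organize the whole procedure as an induction on the radical (Loewy) layer: by the local step each discrepancy between a product and its socle-target is pushed into a strictly higher layer of $\rad(A)$, and since $A$ is finite dimensional $\rad(A)$ is nilpotent, so after finitely many rounds all length-two products required to vanish modulo $\soc(A)$ have been driven into $\soc(A)$. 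Conditions (a)--(c) are preserved throughout because every arrow change is the identity modulo $\rad^2(A)$ and hence does not alter the images of arrows in $\rad(A)/\rad^2(A)$. Verifying that the accumulated changes are mutually compatible and that both (1) and (2) hold in the final presentation is the technical heart of the argument, and the degree-filtration bookkeeping is precisely what guarantees its termination and consistency.
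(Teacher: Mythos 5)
First, a point of order: this proposition is not proved in the paper at all --- it is imported verbatim from \cite{AIP} (Proposition 7.5 there) --- so your attempt has to be judged against the statement itself rather than against a proof in the text. Your local step is correct and is the right starting point: condition (b) of Definition \ref{stb} lets you write $\alpha\gamma=\alpha r\gamma+s$ with $r\in e_v\rad(A)e_v$, $s\in\soc(A)$, and replacing $\gamma$ by $(e_v-r)\gamma$ is a legitimate change of presentation that puts this one product into $\soc(A)$. The genuine gap is the globalization, which you explicitly defer (``the technical heart''), and the termination mechanism you propose in its place does not work.

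The local fix does not push discrepancies strictly deeper into the radical filtration: the damage it inflicts on a neighbouring product, say $\delta\gamma\mapsto\delta\gamma-\delta r\gamma$, lies in $\delta\rad(A)\gamma\subseteq\rad^3(A)$, which is exactly the layer where all discrepancies already lived by condition (b), so nilpotence of $\rad(A)$ yields no convergence. Worse, the natural iteration cycles: if both predecessors $\alpha,\delta$ of $\gamma$ need their products with $\gamma$ normalized, with $\alpha\gamma=\alpha r_1\gamma+s_1$ and $\delta\gamma=\delta r_2\gamma+s_2$, then your fix for $(\alpha,\gamma)$ gives $\gamma_1=(e_v-r_1)\gamma$, the subsequent fix for $(\delta,\gamma_1)$ collapses to $\gamma_2=(e_v-r_2)\gamma$ (the two unit factors multiply out), which reinstates the original defect at $(\alpha,\gamma_2)$, and the process oscillates between these two states forever. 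What a correct proof needs, and what is absent from your sketch, is at minimum the following. (i) A consistent choice of \emph{which} product to normalize for each arrow: conditions (b) and (c), applied simultaneously at first factors and at second factors, show that at each vertex one can match incoming to outgoing arrows so that no arrow is ever asked to absorb two corrections on the same side --- for instance, if both $\alpha$ and $\delta$ were forced onto $\gamma$, then $\alpha\gamma_2\notin\alpha\rad(A)\gamma_2+\soc(A)$ and $\delta\gamma_2\notin\delta\rad(A)\gamma_2+\soc(A)$ would both hold, contradicting (c) at $\gamma_2$. (ii) Once such a choice is made, all corrections can be carried out simultaneously and do not undo one another, because $A$ is selfinjective: $\soc(A)$ is a two-sided ideal with $\rad(A)\soc(A)=\soc(A)\rad(A)=0$, so a product already equal to a socle element is unchanged by further multiplication by units congruent to the identity modulo $\rad(A)$. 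It is telling that selfinjectivity, which is part of the definition of stably biserial and is essential here, is never invoked in your argument; beyond (i) and (ii), loops, small quivers, and products that are zero in the old presentation but nonzero in the new one still require the kind of case analysis carried out in \cite{AIP}.
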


\begin{opr}\label{ssb} Let $Q$ be a quiver, $I$ an admissible ideal of $\kk Q$. An
 algebra $A'$ is called special biserial if it is isomorphic to $A = \kk Q/I$, where $Q$ and $I$ satisfy the following conditions:

(a) For each vertex $i \in Q$, the number of outgoing arrows and the
number of incoming arrows are less than or equal to 2;

(b) For each arrow $\alpha \in Q$, there is at most one arrow $\beta
\in Q$ that satisfies $\alpha \beta \neq 0$;

(c) For each arrow $\alpha \in Q$, there is at most one arrow $\beta
\in Q$ that satisfies $\beta\alpha  \neq 0$.

If additionally $A$ is selfinjective, then it is called selfinjective special biserial.
\end{opr}

\section{Stable equivalences}

In this section we are going to prove that if an
algebra is stably equivalent to a selfinjective special biserial
algebra (not isomorphic to the Nakayama algebra with $\rad^2=0$), then it is stably biserial following the ideas from
\cite{Pog}.

\begin{prop}[Proposition 7.11 \cite{AIP}, see also Lemmas 5.3, 5.4 \cite{Pog}]\label{tau-period}
Let $B$ be an indecomposable selfinjective algebra which is not a
local Nakayama algebra. Then, we have the following:

(1) If $P$ is indecomposable projective, then $\tau(P/\soc (P))
\not\simeq P/\soc(P)$;

(2) If $S$ is simple, then $S$ is non-projective and $\tau(S)
\not\simeq S$.
\end{prop}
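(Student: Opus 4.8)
The plan is to settle part (1) with the standard almost split sequence running through a projective-injective module, and then to deduce part (2) from it. First I would record that, since $B$ is indecomposable (hence connected), selfinjective, and not $\kk$, no indecomposable projective $P$ can be simple: a simple projective would also be simple injective, producing a vertex of $Q$ with neither incoming nor outgoing arrows, contradicting connectedness unless $B \simeq \kk$, a local Nakayama algebra. Thus $\soc(P) \subsetneq P$ and $\rad(P) \neq 0$ for every indecomposable projective $P$. The quotient $P/\soc(P)$ has simple top $\topp(P)$, so it is local, hence indecomposable, and it is non-projective (an indecomposable projective with top $\topp(P)$ would be $P$ itself, forcing $\soc(P)=0$).

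For such $P$ there is the almost split sequence
$$0 \to \rad(P) \to \bigl(\rad(P)/\soc(P)\bigr) \oplus P \to P/\soc(P) \to 0,$$
whence $\tau(P/\soc(P)) \simeq \rad(P)$. Since $\rad(P)$ and $P/\soc(P)$ are both non-projective indecomposables, they are isomorphic in $\smod B$ if and only if they are isomorphic in $\rmod B$, so it suffices to prove $\rad(P) \not\simeq P/\soc(P)$. Suppose they were isomorphic. Comparing tops gives $\topp(\rad(P)) \simeq \topp(P/\soc(P)) = \topp(P)$; writing $\topp(P) = S_c$, this makes $\rad(P)$ local with top $S_c$, which forces vertex $c$ to have a unique outgoing arrow and that arrow to be a loop $\alpha \colon c \to c$. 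Iterating this, $P$ must be uniserial with every composition factor equal to $S_c$.

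The main obstacle is to rule this out, and here selfinjectivity is essential (the conclusion is false for general algebras): $P$ is also injective with $\soc(P) = S_c$, so $P$ is the injective envelope of $S_c$, and since $P$ is uniserial with all factors $S_c$, the second socle layer of this injective envelope is again $S_c$, forcing the only arrow into $c$ to be the same loop $\alpha$. Hence vertex $c$ is joined in $Q$ only to itself, and connectedness forces $Q$ to be a single loop, i.e. $B \simeq \kk[x]/(x^n)$ is local Nakayama, a contradiction; this establishes (1). For (2), I would first note that a simple $S$ is non-projective by the same isolated-vertex argument. Writing $S = \soc(P)$ for the indecomposable projective $P$ with that socle, the sequence $0 \to \soc(P) \to P \to P/\soc(P) \to 0$ with $P$ injective exhibits $P/\soc(P) \simeq \Omega^{-1}(S)$. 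Since $\tau$ commutes with $\Omega$ (as $\tau \simeq \nu\Omega^2$ and $\nu$ commutes with $\Omega$), part (1) yields $\Omega^{-1}\tau(S) \not\simeq \Omega^{-1}(S)$, and applying the autoequivalence $\Omega$ gives $\tau(S) \not\simeq S$. As every simple module arises as the socle of some indecomposable projective, this proves (2) for all simples.
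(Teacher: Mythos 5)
Your argument is correct. The main point of comparison is that the paper offers no proof of this proposition at all --- it is imported wholesale from \cite{AIP} (Proposition 7.11; see also Lemmas 5.3, 5.4 of \cite{Pog}) --- so there is no in-paper argument to measure yours against; what you have written is essentially the standard proof from those cited sources. Your key steps are all sound: $P/\soc(P)$ is indecomposable non-projective; the almost split sequence $0 \to \rad(P) \to \rad(P)/\soc(P) \oplus P \to P/\soc(P) \to 0$ (which the paper itself invokes later for special biserial algebras, and which holds for any non-simple projective-injective $P$) identifies $\tau(P/\soc(P))$ with $\rad(P)$; an isomorphism $\rad(P) \simeq P/\soc(P)$ propagates down the radical filtration, making every radical layer of $P$ isomorphic to $\topp(P)$, so $P$ is uniserial with a unique outgoing loop at its vertex; injectivity of $P$ then controls the incoming arrows, and connectedness forces $B \simeq \kk[x]/(x^n)$, a contradiction. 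The reduction of (2) to (1) via $\Omega^{-1}(S) \simeq P/\soc(P)$ for $P$ the injective envelope of $S$, together with the fact that $\tau \simeq \nu\Omega^2$ commutes with $\Omega$ on $\smod B$, is also valid. Two facts you use without comment are easily supplied and worth recording: you tacitly take $B$ basic (harmless, since the statement is Morita invariant once ``local Nakayama'' is read up to Morita equivalence, as the paper implicitly does), and you assert that $\rad(P)$ is a non-projective indecomposable; both follow because $P$ has simple socle, hence is uniform, so $\rad(P)$ is indecomposable, and if $\rad(P)$ were projective it would be injective (selfinjectivity) and would split off $P$, which is impossible since $0 \neq \rad(P) \subsetneq P$.
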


From now on we are not going to consider local Nakayama algebras.
Thus, we can assume that $A$ does not have any simple
modules of $\tau$-period 1.

\begin{opr}
Let $A$ be a selfinjective $\kk$-algebra. An indecomposable $A$-module
$M$ is said to be a stable brick if $\Ends(M) \simeq \kk$. A family
$\{M_i\}_{i \in I}$ of mutually non-isomorphic stable bricks is a system of
orthogonal stable bricks if the following conditions hold:

(1) $M_i$ is not of $\tau$-period $1$ for every $i \in I$;

(2) $\Homs(M_i, M_j) = 0$ for any $i, j \in I$ with $i \neq j$.

A system of orthogonal stable bricks $\{M_i\}_{i \in I}$ is called
maximal if for every indecomposable $A$-module N that is neither
projective nor of $\tau$-period $1$ there exist $i, j \in I$ such
that $\Homs(M_i,N) \neq 0$ and $\Homs(N,M_j) \neq 0$.
\end{opr}

\begin{rem}
If there is an equivalence of categories $\smod B
\rightarrow \smod A$, where $A$ and $B$ are selfinjective, then the
image of the set of representatives of the iso-classes of simple modules is a maximal system of orthogonal stable
bricks.

Since we are interested in maximal systems of orthogonal stable bricks
which are images of the sets of simple modules, for now we can
assume, that the cardinality of $\mathcal{M}$ is finite.
\end{rem}

\begin{opr}
Let $\mathcal{M} = \{M_1,\cdots, M_n\}$ be a maximal system of
orthogonal stable bricks. An indecomposable $A$-module $N$ is called
s-projective with respect to $\mathcal{M}$ if the following
conditions are satisfied:

(1) $N$ is not of $\tau$-period $1$;

(2) $\Homs(N,\oplus_{i=1}^n M_i) \simeq \kk$;

(3) If $\Homs(N,M_i)\neq 0$, then for every non-zero $\underline{f}
: X \rightarrow M_i$ and  $\underline{g} : N
\rightarrow M_i$ there exists $\underline{h} : N \rightarrow X$ such
that $\underline{fh} = \underline{g}$.

An $A$-module $N$ is called s-projective with respect to
$\mathcal{M}$ if it is a sum of indecomposable s-projective modules;
s-injective modules are defined dually.
\end{opr}

It is clear that for an indecomposable s-projective $A$-module $N$
there exists only one $i\in I$ such that $\Homs(N,M_i)\neq 0$. In
\cite{Pog2} it is proved that an indecomposable $A$-module $N$ is
s-projective with respect to $\mathcal{M}$ if and only if $N \simeq
\tau^{-1}\Omega(M)$ for some $M \in \mathcal{M}$. Let $N$ be an
 indecomposable s-projective $A$-module with respect to $\mathcal{M}$. We say that
$\sstop(N) \simeq M$ if $M \in \mathcal{M}$  and
$\Homs(N,M) \neq 0$. In this case $\sstop(\tau^{-1}\Omega(M)) \simeq
M$ for $M \in \mathcal{M}$. See also  \cite[Proposition 7.13]{AIP}.

\begin{rem}
If there is an equivalence of categories $\smod B \rightarrow \smod
A$, where $A$ and $B$ are selfinjective and $\mathcal{M} =
\{M_1,\cdots, M_n\}$ is the image of the set of simple $B$-modules, then
the image of the module of the form $P/\soc(P)$, where $P$ is an
indecomposable projective $B$-module, is indecomposable s-projective with respect to
$\mathcal{M}$.
\end{rem}

We will denote by $Q_0$ the set of vertices of $Q$, by $Q_1$ the set of arrows of $Q$ and by  $s(\alpha)$, $e(\alpha)$ the maps from $Q_1$ to $Q_0$, which map an arrow to its beginning and end respectively. 

From now on, when considering a selfinjective special biserial algebra $A=\kk Q/I$ we will fix a presentation satisfying the conditions from Definition \ref{ssb}. Note that the generating set of relations in $I$ can be chosen to consist of relations of three kinds: zero relations $\alpha\beta=0$ for some $\alpha, \beta \in Q_1$; relations of the form $\alpha_1\cdots\alpha_m=c\beta_1\cdots\beta_n$ ($c\in \kk^*$) for $\alpha_1\neq \beta_1$ and $s(\alpha_1)=s(\beta_1)$; relations of the form $\alpha_1\cdots\alpha_m=0$ in the case when there is only one arrow leaving $s(\alpha_1)$ ($\alpha_i, \beta_j \in Q_1$).

Recall that an indecomposable non-projective module over a
selfinjective special biserial algebra $A=\kk Q/I$ is either a string
or a band module. Since all the band modules are of $\tau$-period $1$ we
are not going to use them. 

Given an arrow $\alpha \in Q_1$, we will denote by $\alpha^{-1}$ its
formal inverse; thus $s(\alpha^{-1})=e(\alpha)$,
$e(\alpha^{-1})=s(\alpha)$, $(\alpha^{-1})^{-1}=\alpha$. The set of
formal inverse arrows $\{\alpha^{-1}\}_{\alpha \in Q_1}$ will be
denoted by $Q_1^{-1}$. A string of length $n$ is a sequence of the form
$c=c_1\cdots c_n$, where $c_i \in Q_1 \cup Q_1^{-1}$,
$s(c_{i+1})=e(c_i)$, $c_i \neq c_{i+1}^{-1}$ and neither $c_i\cdots
c_{i+t}$ nor $c_{i+t}^{-1}\cdots c_{i}^{-1}$ belong to $\soc (A)$ for any $i$ and $t$.
Additionally, for every vertex $x \in Q_0$, there is a string of length
zero denoted by $1_x$ with $s(1_x)=e(1_x)=x$. For a string
$c=c_1\cdots c_n$ of positive length, let $s(c):=s(c_1)$, $e(c):=e(c_n)$.

Let $c=c_1\cdots c_n$ be a string of length $n \geq 1$. A string
module $M_c$ is defined as follows: fix a basis $\{z_0, \cdots,
z_n\}$, given an idempotent $e_x$, corresponding to the vertex $x$,
$z_ie_x=z_i$ if 
 $x=e(c_{i})$ or  $x=s(c_{i+1})$ and zero otherwise.
Given an arrow $\alpha \in Q_1$, $z_i\alpha=z_{i-1}$ if
$c_{i}=\alpha^{-1}$, $z_i\alpha=z_{i+1}$ if $c_{i+1}=\alpha$ and
zero otherwise. To the string of length zero $1_x$ we associate the
simple module corresponding to the vertex $x$. Two string modules
corresponding to different strings $c$ and $c'$ are isomorphic if
and only if $c=c_1\cdots c_n$ and $c'=c_n^{-1} \cdots c_1^{-1}$.
Usually we will depict the string and the corresponding module by
the diagram of that module, e.g., the string
$\alpha^{-1}\beta\gamma\delta^{-1}$ will be depicted as
\begin{center}
\begin{tikzpicture}[xscale=0.5]
\node (v1) at (-1,2) {$z_1$}; \node (v2) at (-2,1) {$z_0$}; \node
(v3) at (0,1) {$z_2$}; \node (v4) at (1,0) {$z_3$}; \node (v5) at
(2,1) {$z_4$}; \draw [->] (v1) edge [left] node{$\alpha$} (v2) ;
\draw [->] (v1) edge [right] node{$\beta$} (v3); \draw [->] (v3)
edge [right] node{$\gamma$} (v4); \draw [->] (v5) edge [right]
node{$\delta$} (v4);
\end{tikzpicture}
 \end{center}
We will call $z_i$ a peak if there is no $\alpha\in Q_1$ such that $z_{i-1}\alpha=z_i$ or $z_{i+1}\alpha=z_i$. We will call $z_i$ a deep if for all $\alpha\in Q_1$ we have $z_{i}\alpha=0$. In the example above $z_1, z_4$ are peaks and $z_0,z_3$ are deeps. Note that this is not the standard use of the terms peak and deep. In cases when it does not lead to confusion, we will omit the names of the arrows in the diagrams and we will use diagrammatic notation for the elements of the algebra $A$.

We shall now describe the Auslander-Reiten sequences in $\rmod A$,
containing string modules. The Auslander-Reiten sequences,
containing an indecomposable projective module $P$ in the middle term are of the form
$$0 \rightarrow \rad(P) \rightarrow \rad(P)/\soc(P) \oplus P\rightarrow P/\soc(P) \rightarrow 0. $$
Assume now that $M_c$ is a non-projective indecomposable module
not isomorphic to $P/\soc(P)$ for any projective module $P$. The
module $M_c$ is of the form

\begin{center}
\begin{tikzpicture}[xscale=0.5, yscale=0.7]
\node (v1) at (-2,1) {$e_{i_1}$}; \node (v2) at (-3,0) {}; \node
(v3) at (-4,-1) {}; \node (v4) at (-5,-2) {$e_{j_0}$}; \node (v5) at
(-1,0) {}; \node (v6) at (0,-1) {}; \node (v7) at (1,-2)
{$e_{j_1}$}; \node (v8) at (2,-1) {}; \node (v17) at (3,0) {}; \node
(v18) at (5,0) {}; \node (v9) at (6,-1) {}; \node (v10) at (7,-2)
{$e_{j_{t-1}}$}; \node (v11) at (8,-1) {}; \node (v13) at (9,0)
{$e_{i_t \lefthalfcup}$}; \node (v12) at (10,1) {$e_{i_t}$}; \node
(v14) at (11,0) {}; \node (v15) at (12,-1) {}; \node (v16) at
(13,-2) {$e_{j_t}$}; \draw [->] (v1) edge (v2); \draw [->] (v3) edge
(v4); \draw [->] (v1) edge (v5); \draw [->] (v6) edge (v7); \draw
[->] (v8) edge (v7); \draw [->] (v9) edge (v10); \draw [->] (v11)
edge (v10); \draw [->] (v12) edge (v13); \draw [->] (v12) edge
(v14); \draw [->] (v15) edge (v16); \draw [dashed] (v2) edge (v3);
\draw [dashed] (v5) edge (v6); \draw [dashed] (v17) edge (v18);
\draw [dashed] (v13) edge (v11); \draw [dashed] (v14) edge (v15);
\node at (-5,0) {$M_c$:};
\end{tikzpicture}
 \end{center}
where the first or the last directed substring may be trivial. Let
$c^r$ be the maximal string extending $c$ on the right by $e_{j_t}
\rightarrow e_{j_{t}\righthalfcup} \leftarrow \cdots \leftarrow
e_{i_{t+1}}$ if such a string exists (adding a co-hook).

\begin{center}
\begin{tikzpicture}[xscale=0.5, yscale=0.7]
\node (v1) at (-2,1) {$e_{i_1}$}; \node (v2) at (-3,0) {}; \node
(v3) at (-4,-1) {}; \node (v4) at (-5,-2) {$e_{j_0}$}; \node (v5) at
(-1,0) {}; \node (v6) at (0,-1) {}; \node (v7) at (1,-2)
{$e_{j_1}$}; \node (v8) at (2,-1) {}; \node (v17) at (3,0) {}; \node
(v18) at (5,0) {}; \node (v9) at (6,-1) {}; \node (v10) at (7,-2)
{$e_{j_{t-1}}$}; \node (v11) at (8,-1) {}; \node (v13) at (9,0)
{$e_{i_t \lefthalfcup}$}; \node (v12) at (10,1) {$e_{i_t}$}; \node
(v14) at (11,0) {}; \node (v15) at (12,-1) {}; \node (v16) at
(13,-2) {$e_{j_t}$}; \draw [->] (v1) edge (v2); \draw [->] (v3) edge
(v4); \draw [->] (v1) edge (v5); \draw [->] (v6) edge (v7); \draw
[->] (v8) edge (v7); \draw [->] (v9) edge (v10); \draw [->] (v11)
edge (v10); \draw [->] (v12) edge (v13); \draw [->] (v12) edge
(v14); \draw [->] (v15) edge (v16); \draw [dashed] (v2) edge (v3);
\draw [dashed] (v5) edge (v6); \draw [dashed] (v17) edge (v18);
\draw [dashed] (v13) edge (v11); \draw [dashed] (v14) edge (v15);
\node at (-5,0) {$M_{c^r}$:}; \node (v19) at (14,-3) {$e_{j_t
\righthalfcup}$}; \node (v20) at (15,-2) {}; \node (v22) at (16,-1)
{}; \node (v21) at (17,0) {$e_{i_{t+1}}$}; \draw [->] (v16) edge
(v19); \draw [->] (v20) edge (v19); \draw [->] (v21) edge (v22);
\draw [dashed] (v22) edge (v20);
\end{tikzpicture}
 \end{center}

If not, let $c^r$ be the string obtained from $c$ by cancellation of
the last directed substring including the vertex $e_{i_t}$ ($c^r$ may be
empty),
\begin{center}
\begin{tikzpicture}[xscale=0.5, yscale=0.7]
\node (v1) at (-2,1) {$e_{i_1}$}; \node (v2) at (-3,0) {}; \node
(v3) at (-4,-1) {}; \node (v4) at (-5,-2) {$e_{j_0}$}; \node (v5) at
(-1,0) {}; \node (v6) at (0,-1) {}; \node (v7) at (1,-2)
{$e_{j_1}$}; \node (v8) at (2,-1) {}; \node (v17) at (3,0) {}; \node
(v18) at (5,0) {}; \node (v9) at (6,-1) {}; \node (v10) at (7,-2)
{$e_{j_{t-1}}$}; \node (v11) at (8,-1) {}; \node (v13) at (9,0)
{$e_{i_t \lefthalfcup}$}; \draw [->] (v1) edge (v2); \draw [->] (v3)
edge (v4); \draw [->] (v1) edge (v5); \draw [->] (v6) edge (v7);
\draw [->] (v8) edge (v7); \draw [->] (v9) edge (v10); \draw [->]
(v11) edge (v10); \draw [dashed] (v2) edge (v3); \draw [dashed] (v5)
edge (v6); \draw [dashed] (v17) edge (v18); \draw [dashed] (v13)
edge (v11); \node at (-5,0) {$M_{c^r}$:};
\end{tikzpicture}
\end{center}
(deleting a hook). Similarly let $^{l}c$ be obtained from $c$ by the
corresponding operations on the left-hand side of $c$. Since $M_c$
is not isomorphic to $P/\soc(P)$ for any projective module $P$, at
least one of the strings $^l(c^r)$ or $(^lc)^r$ is non-empty, and if
both are defined, then $^l(c^r)=(^lc)^r$, let $^lc^r$ be the
non-trivial string $^l(c^r)$ or $(^lc)^r$. Then the Auslander-Reiten
sequence terminating at $M_c$ is of the form $$0 \rightarrow
\tau(M_c) \simeq M_{^lc^r} \rightarrow M_{c^r} \oplus M_{^lc}
\rightarrow M_c \rightarrow 0.$$ Similarly, $\tau^{-1}$ can be
computed by adding hooks if possible and deleting co-hooks if not
\cite{SW}, \cite{WW}, \cite{BR}.

The following lemma follows immediately from the
description of the Auslander-Reiten sequences.

\begin{lem}[see Lemma 6.4 \cite{Pog}]\label{s-proj}
Let $A$ be a selfinjective special biserial algebra and
let $\mathcal{M}$ be a maximal system of orthogonal stable bricks in
$\smod A$. Consider $M \in \mathcal{M}$ and let $N$ be an indecomposable
s-projective module with respect to $\mathcal{M}$ with $\sstop(N)
\simeq M$.

case (1): If $M$ is of the form
\begin{center}
\begin{tikzpicture}[xscale=0.5, yscale=0.5]
\node (v1) at (-2,1) {$e_{i_1}$}; \node (v2) at (-3,0) {}; \node
(v3) at (-4,-1) {}; \node (v4) at (-5,-2) {$e_{j_0}$}; \node (v5) at
(-1,0) {}; \node (v6) at (0,-1) {}; \node (v7) at (1,-2)
{$e_{j_1}$}; \node (v8) at (2,-1) {}; \node (v17) at (3,0) {}; \node
(v18) at (5,0) {}; \node (v9) at (6,-1) {}; \node (v10) at (7,-2)
{$e_{j_{t-1}}$}; \node (v11) at (8,-1) {}; \node (v13) at (9,0) {};
\node (v12) at (10,1) {$e_{i_t}$}; \node (v14) at (11,0) {}; \node
(v15) at (12,-1) {}; \node (v16) at (13,-2) {$e_{j_t}$}; \draw [->]
(v1) edge (v2); \draw [->] (v3) edge (v4); \draw [->] (v1) edge
(v5); \draw [->] (v6) edge (v7); \draw [->] (v8) edge (v7); \draw
[->] (v9) edge (v10); \draw [->] (v11) edge (v10); \draw [->] (v12)
edge (v13); \draw [->] (v12) edge (v14); \draw [->] (v15) edge
(v16); \draw [dashed] (v2) edge (v3); \draw [dashed] (v5) edge (v6);
\draw [dashed] (v17) edge (v18); \draw [dashed] (v13) edge (v11);
\draw [dashed] (v14) edge (v15);
\end{tikzpicture}
\end{center}
$t=0,1,\cdots$ then $N$ is of the form

\begin{center}
\begin{tikzpicture}[xscale=0.5, yscale=0.5]
\node (v1) at (-3,2) {$e_{j_0}$}; \node (v2) at (-4,1) {}; \node
(v3) at (-5,0) {}; \node (v4) at (-6,-1) {$e_{i'_0}$}; \node (v5) at
(-2,1) {}; \node (v6) at (-1,0) {}; \node (v7) at (0,-1)
{$e_{i'_1}$}; \node (v8) at (1,0) {}; \node (v10) at (2,1) {}; \node
(v9) at (3,2) {$e_{j_1}$}; \node (v11) at (4,1) {}; \node (v12) at
(5,0) {}; \node (v13) at (6,-1) {$e_{i'_2}$}; \node (v14) at (7,0)
{}; \node (v23) at (8,1) {}; \node (v24) at (9,1) {}; \node (v15) at
(10,0) {}; \node (v16) at (11,-1) {$e_{i'_t}$}; \node (v17) at
(12,0) {}; \node (v19) at (13,1) {}; \node (v18) at (14,2)
{$e_{j_t}$}; \node (v20) at (15,1) {}; \node (v21) at (16,0) {};
\node (v22) at (17,-1) {$e_{i'_{t+1}}$}; \draw [->] (v1) edge (v2);
\draw [->] (v3) edge (v4); \draw [->] (v1) edge (v5); \draw [->]
(v6) edge (v7); \draw [->] (v8) edge (v7); \draw [->] (v9) edge
(v10); \draw [->] (v9) edge (v11); \draw [->] (v12) edge (v13);
\draw [->] (v14) edge (v13); \draw [->] (v15) edge (v16); \draw [->]
(v17) edge (v16); \draw [->] (v18) edge (v19); \draw [->] (v18) edge
(v20); \draw [->] (v21) edge (v22); \draw [dashed] (v20) edge (v21);
\draw [dashed] (v19) edge (v17); \draw [dashed] (v23) edge (v24);
\draw [dashed] (v2) edge (v3); \draw [dashed] (v5) edge (v6); \draw
[dashed] (v10) edge (v8); \draw [dashed] (v11) edge (v12);
\end{tikzpicture}
\end{center}
where $e_{j_{0}} \rightarrow \cdots \rightarrow e_{i'_{0}}$ and
$e_{j_{t}} \rightarrow \cdots \rightarrow e_{i'_{t+1}}$ are maximal directed
strings (may be trivial), $e_{i'_k} \leftarrow \cdots \leftarrow
e_{j_{k-1}}  \leftarrow \cdots \leftarrow e_{i_k}=c_ke_{i'_k}
\leftarrow \cdots \leftarrow e_{j_{k}}  \leftarrow \cdots \leftarrow
e_{i_k}$ in $A$ for $k=1,2,\cdots,t$ and some $c_k \in \kk^*$.

case (2): If $M$ is of the form
\begin{center}
\begin{tikzpicture}[xscale=0.5, yscale=0.5]
\node (v1) at (-2,1) {$e_{i_1}$}; \node (v5) at (-1,0) {}; \node
(v6) at (0,-1) {}; \node (v7) at (1,-2) {$e_{j_1}$}; \node (v8) at
(2,-1) {}; \node (v17) at (3,0) {}; \node (v18) at (5,0) {}; \node
(v9) at (6,-1) {}; \node (v10) at (7,-2) {$e_{j_{t-1}}$}; \node
(v11) at (8,-1) {}; \node (v13) at (9,0) {}; \node (v12) at (10,1)
{$e_{i_t}$}; \draw [->] (v1) edge (v5); \draw [->] (v6) edge (v7);
\draw [->] (v8) edge (v7); \draw [->] (v9) edge (v10); \draw [->]
(v11) edge (v10); \draw [->] (v12) edge (v13); \draw [dashed] (v5)
edge (v6); \draw [dashed] (v17) edge (v18); \draw [dashed] (v13)
edge (v11);
\end{tikzpicture}
\end{center}
$t=2,3,\cdots$ then $N$ is of the form

\begin{center}
\begin{tikzpicture}[xscale=0.5, yscale=0.5]
\node (v7) at (0,-1) {$e_{i'_1\righthalfcap} $}; \node (v8) at (1,0)
{}; \node (v10) at (2,1) {}; \node (v9) at (3,2) {$e_{j_1}$}; \node
(v11) at (4,1) {}; \node (v12) at (5,0) {}; \node (v13) at (6,-1)
{$e_{i'_2}$}; \node (v14) at (7,0) {}; \node (v23) at (8,1) {};
\node (v24) at (9,1) {}; \node (v15) at (10,0) {}; \node (v16) at
(11,-1) {$e_{i'_{t-1}}$}; \node (v17) at (12,0) {}; \node (v19) at
(13,1) {}; \node (v18) at (14,2) {$e_{j_{t-1}}$}; \node (v20) at
(15,1) {}; \node (v21) at (16,0) {}; \node (v22) at (17,-1)
{$e_{i'_{t}\lefthalfcap}$}; \draw [->] (v8) edge (v7); \draw [->]
(v9) edge (v10); \draw [->] (v9) edge (v11); \draw [->] (v12) edge
(v13); \draw [->] (v14) edge (v13); \draw [->] (v15) edge (v16);
\draw [->] (v17) edge (v16); \draw [->] (v18) edge (v19); \draw [->]
(v18) edge (v20); \draw [->] (v21) edge (v22); \draw [dashed] (v20)
edge (v21); \draw [dashed] (v19) edge (v17); \draw [dashed] (v23)
edge (v24); \draw [dashed] (v10) edge (v8); \draw [dashed] (v11)
edge (v12);
\end{tikzpicture}
\end{center}
where $e_{i_1} \rightarrow \cdots \rightarrow e_{j_1}\rightarrow \cdots \rightarrow
e_{i'_1\righthalfcap}$ and $e_{i_{t}} \rightarrow \cdots \rightarrow e_{j_{t-1}} \rightarrow
\cdots \rightarrow e_{i'_{t}\lefthalfcap}$ ($e_{j_1}$ may coincide
with $e_{i'_1\righthalfcap}$, $e_{j_{t-1}}$ may coincide with
$e_{i'_{t}\lefthalfcap}$) are maximal directed strings, $e_{i'_k} \leftarrow
\cdots \leftarrow e_{j_{k-1}}  \leftarrow \cdots \leftarrow
e_{i_k}=c_ke_{i'_k} \leftarrow \cdots \leftarrow e_{j_{k}}  \leftarrow
\cdots \leftarrow e_{i_k}$ in $A$ for $k=2,3,\cdots,t-1$ and $c_k \in \kk^*$.

case (3): If $M$ is of the form
\begin{center}
\begin{tikzpicture}[xscale=0.5, yscale=0.5]
\node (v1) at (-2,1) {$e_{i_1}$}; \node (v2) at (-3,0) {}; \node
(v3) at (-4,-1) {}; \node (v4) at (-5,-2) {$e_{j_0}$}; \node (v5) at
(-1,0) {}; \node (v6) at (0,-1) {}; \node (v7) at (1,-2)
{$e_{j_1}$}; \node (v8) at (2,-1) {}; \node (v17) at (3,0) {}; \node
(v18) at (5,0) {}; \node (v9) at (6,-1) {}; \node (v10) at (7,-2)
{$e_{j_{t-1}}$}; \node (v11) at (8,-1) {}; \node (v13) at (9,0) {};
\node (v12) at (10,1) {$e_{i_t}$}; \draw [->] (v1) edge (v2); \draw
[->] (v3) edge (v4); \draw [->] (v1) edge (v5); \draw [->] (v6) edge
(v7); \draw [->] (v8) edge (v7); \draw [->] (v9) edge (v10); \draw
[->] (v11) edge (v10); \draw [->] (v12) edge (v13); \draw [dashed]
(v2) edge (v3); \draw [dashed] (v5) edge (v6); \draw [dashed] (v17)
edge (v18); \draw [dashed] (v13) edge (v11);
\end{tikzpicture}
\end{center}
$t=1,2,\cdots$ then $N$ is of the form
\begin{center}
\begin{tikzpicture}[xscale=0.5, yscale=0.5]
\node (v1) at (-3,2) {$e_{j_0}$}; \node (v2) at (-4,1) {}; \node
(v3) at (-5,0) {}; \node (v4) at (-6,-1) {$e_{i'_0}$}; \node (v5) at
(-2,1) {}; \node (v6) at (-1,0) {}; \node (v7) at (0,-1)
{$e_{i'_1}$}; \node (v8) at (1,0) {}; \node (v23) at (2,1) {}; \node
(v24) at (3,1) {}; \node (v15) at (4,0) {}; \node (v16) at (5,-1)
{$e_{i'_{t-1}}$}; \node (v17) at (6,0) {}; \node (v19) at (7,1) {};
\node (v18) at (8,2) {$e_{j_{t-1}}$}; \node (v20) at (9,1) {}; \node
(v21) at (10,0) {}; \node (v22) at (11,-1)
{$e_{i'_{t}\lefthalfcap}$}; \draw [->] (v1) edge (v2); \draw [->]
(v3) edge (v4); \draw [->] (v1) edge (v5); \draw [->] (v6) edge
(v7); \draw [->] (v8) edge (v7); \draw [->] (v15) edge (v16); \draw
[->] (v17) edge (v16); \draw [->] (v18) edge (v19); \draw [->] (v18)
edge (v20); \draw [->] (v21) edge (v22); \draw [dashed] (v20) edge
(v21); \draw [dashed] (v19) edge (v17); \draw [dashed] (v23) edge
(v24); \draw [dashed] (v2) edge (v3); \draw [dashed] (v5) edge (v6);
\end{tikzpicture}
\end{center}
$e_{j_{0}} \rightarrow \cdots  \rightarrow e_{i'_{0}}$ and
$e_{i_{t}} \rightarrow \cdots \rightarrow e_{j_{t-1}} \rightarrow \cdots \rightarrow
e_{i'_{t}\lefthalfcap}$ are maximal directed strings, $e_{i'_k} \leftarrow
\cdots \leftarrow e_{j_{k-1}}  \leftarrow \cdots \leftarrow
e_{i_k}=c_ke_{i'_k} \leftarrow \cdots \leftarrow e_{j_{k}}  \leftarrow
\cdots \leftarrow e_{i_k}$ in $A$ for $k=1,2,\cdots, t-1 $ and $c_k \in \kk^*$.
\end{lem}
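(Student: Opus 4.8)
The plan is to use the identification $N \simeq \tau^{-1}\Omega(M)$ recalled just before the statement: since $N$ is indecomposable $s$-projective with $\sstop(N)\simeq M$, and since $M$ is non-projective and not of $\tau$-period $1$, it suffices to compute $\Omega(M)$ as a string module and then apply $\tau^{-1}$. Because $\tau$ is an autoequivalence of $\smod A$, this is equivalent to writing down the module $N'$ displayed in each case and checking that $\tau(N')\simeq\Omega(M)$; I will follow this slightly cleaner route, computing $\Omega(M)$ from the projective cover and $\tau(N')$ from the Butler--Ringel hook/co-hook rules recalled above.

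First I would treat case (1). The peaks of $M$ are exactly $e_{i_1},\dots,e_{i_t}$, so $\topp(M)\simeq\bigoplus_{k=1}^{t}S_{i_k}$ and the minimal projective cover is $\bigoplus_{k=1}^{t}P_{i_k}$. Each $P_{i_k}$ is biserial with simple socle: its two maximal directed arms out of $e_{i_k}$ meet at $\soc(P_{i_k})$, and the defining relation is the binomial socle relation stating that the left arm equals $c_k$ times the right arm in $A$. The cover $P_{i_k}\to M$ identifies the two arms of $P_{i_k}$ with the two arms of the peak $e_{i_k}$ down to the deeps $e_{j_{k-1}}$ and $e_{j_k}$, and kills everything strictly below them; thus $\Ker$ is spanned by the tails of the arms lying below the deeps, glued at the socles. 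The only subtlety is that at each interior deep $e_{j_k}$ the two preimages (one in $P_{i_k}$, one in $P_{i_{k+1}}$) differ by a kernel element, which becomes a new peak. Reading this off, $\Omega(M)$ is the string module with peaks at $e_{j_1},\dots,e_{j_{t-1}}$ together with the two vertices one step below $e_{j_0}$ and below $e_{j_t}$, and with deeps the socles $e_{i'_k}=\soc(P_{i_k})$ for $k=1,\dots,t$; the identification of $e_{i'_k}$ as this common socle is precisely the displayed equality "$\cdots = c_k\,\cdots$ in $A$".

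Next I would recover $N$ by applying $\tau$ to the displayed $N'$. Both outer ends of $N'$ terminate at $e_{i'_0}$ and $e_{i'_{t+1}}$, which are ends of maximal directed strings and hence cannot be prolonged downwards; therefore the rule for $\tau$ forces the deletion of a hook at each end rather than the addition of a co-hook. Deleting the hook on the left strips the outer peak $e_{j_0}$ together with its outer arm $e_{j_0}\to\cdots\to e_{i'_0}$, leaving the string that starts one step below $e_{j_0}$ and runs down to $e_{i'_1}$, and symmetrically on the right, while the interior peaks $e_{j_1},\dots,e_{j_{t-1}}$ and deeps $e_{i'_1},\dots,e_{i'_t}$ are untouched. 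This is exactly the module $\Omega(M)$ computed above, so $N'\simeq\tau^{-1}\Omega(M)\simeq N$. The degenerate values ($t=0$, where $M$ is simple, and trivial end arms) are checked by the same procedure.

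Finally, cases (2) and (3) follow by the identical two-step computation; the only change is the behaviour at the two ends of $M$, which is dictated by whether each end is a peak or a deep. The three combinations (deep--deep, peak--peak, deep--peak) yield exactly cases (1), (2), (3): at a peak end of $M$ one extends its unique descending arm all the way to the socle, producing the ends marked $e_{i'_1\righthalfcap}$ and $e_{i'_t\lefthalfcap}$. I expect the main obstacle to be the boundary bookkeeping, namely matching, at each end, the hook/co-hook operation of $\tau$ with the biserial socle relations of the $P_{i_k}$, and handling without error the several degenerate subcases (small $t$, trivial end arms, and the coincidences $e_{j_1}=e_{i'_1\righthalfcap}$ and $e_{j_{t-1}}=e_{i'_t\lefthalfcap}$ noted in the statement); the interior of every case is forced uniformly by the socle relations and causes no difficulty.
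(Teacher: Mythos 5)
Your proposal is correct and is essentially the paper's own (implicit) argument: the paper gives no written proof beyond the remark that the lemma follows immediately from the description of the Auslander--Reiten sequences, the intended computation being exactly yours --- combine the identification $N\simeq\tau^{-1}\Omega(M)$ recalled from \cite{Pog2} with a computation of $\Omega(M)$ from the biserial projective covers and the hook/co-hook rules for $\tau$. Your execution of case (1) is accurate, including the key point that the maximal directed end arms of the displayed module force deletion of hooks rather than addition of co-hooks, and the end behaviour in cases (2) and (3) indeed differs only in the way you indicate (at a peak end of $M$ the co-hook/socle-relation bookkeeping produces the ends $e_{i'_1\righthalfcap}$, $e_{i'_t\lefthalfcap}$).
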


The canonical map from $N$ to $M$ sends $e_{j_k}$ from the top of
$N$ to $d_ke_{j_k}$ in the socle of $M$ ($d_k \in \kk$) with all $d_k$ but one
equal to $0$. In the stable category all these maps belong to the same one-dimensional subspace of $\Homs(N,M)$.

\begin{lem}\label{one in, one out}
Let $Q$ be a quiver of a selfinjective special biserial algebra, and
let $x$ be a vertex of $Q$. There is only one arrow entering $x$ if
and only if there is only one arrow leaving $x$.
\end{lem}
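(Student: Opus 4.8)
The plan is to prove the two implications separately, after observing that it suffices to establish just one of them. Indeed, $A^{\mathrm{op}}$ is again selfinjective special biserial: the conditions (a)--(c) of Definition~\ref{ssb} are self-dual and selfinjectivity passes to the opposite algebra, while the quiver of $A^{\mathrm{op}}$ is $Q^{\mathrm{op}}$ and the arrows entering $x$ in $A$ are exactly the arrows leaving $x$ in $A^{\mathrm{op}}$. Hence the implication ``$x$ has a unique outgoing arrow $\Rightarrow$ $x$ has a unique incoming arrow'', applied to $A^{\mathrm{op}}$, yields the reverse implication for $A$. I would also first record that, since $A$ is connected selfinjective and not the excluded trivial case, every vertex has at least one incoming and one outgoing arrow (a source or sink would make some simple module projective-injective, forcing $A=\kk$), and by condition (a) each degree is at most $2$; so all degrees lie in $\{1,2\}$ and the claim amounts to $\#\{\text{arrows into }x\}=\#\{\text{arrows out of }x\}$.

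So I would prove: if $x$ has a unique outgoing arrow $\alpha$, then it has a unique incoming arrow. Suppose not, so there are two incoming arrows $\gamma,\delta$. Since $\alpha$ is the only arrow starting at $x=e(\gamma)=e(\delta)$, the only products with a further arrow that can be nonzero are $\gamma\alpha$ and $\delta\alpha$; by condition (c) of Definition~\ref{ssb} at most one of these is nonzero, so after relabelling $\delta\alpha=0$. Then $\delta$ has no nonzero right continuation, i.e. $\delta\mu=0$ for every arrow $\mu$, so $\delta\cdot\rad(A)=0$. Consequently, inside the projective $P:=e_{s(\delta)}A$ the element $\delta$ spans a copy of $S_x$ lying in $\soc(P)$. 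The key step is then to identify $P$ with the injective envelope of $S_x$: because $A$ is selfinjective, $P$ is injective with simple socle, and as $\delta\in\soc(P)$ we get $\soc(P)=\kk\delta\cong S_x$, whence $P\cong I_x$.

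It remains to show that $P$ is uniserial, which is the step I expect to be the main obstacle. Note $\delta$ is a length-one element (it lies in $\rad(P)\setminus\rad^2(P)$) that already spans the socle. I would rule out $s(\delta)$ having a second outgoing arrow $\varepsilon$: if it did, $\varepsilon$ would begin a second top leg of the biserial module $P$ which, to reach the \emph{simple} socle $\kk\delta$, would force some path $\varepsilon\cdots$ of length $\geq 2$ to equal a scalar multiple of $\delta$ in $A$; but such a relation has a length-one term and so cannot lie in the admissible ideal $I$, a contradiction. Hence $s(\delta)$ has a unique outgoing arrow, $P$ is uniserial of length two with top $S_{s(\delta)}$ and socle $S_x$, and therefore $I_x\cong P$ is uniserial. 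Reading off the second socle layer, $\soc^2(I_x)/\soc(I_x)\cong S_{s(\delta)}$ (the dual of the fact that $\rad(P_x)/\rad^2(P_x)$ records the arrows leaving $x$), shows that exactly one arrow enters $x$, contradicting the existence of both $\gamma$ and $\delta$. This proves the implication, and duality completes the proof. I expect everything except the uniseriality step to be routine bookkeeping about socles of projective-injective modules; it is precisely admissibility of $I$ (the absence of relations with a linear term) that excludes the biserial configuration for $P$.
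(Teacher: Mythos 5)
Your proof is correct, and its kernel is the same as the paper's: the special biserial condition kills one of the two products involving the extra arrows, the vanishing product pushes an arrow into the socle, selfinjectivity makes the relevant socle simple, and admissibility of $I$ (no relation can have a linear term) yields the contradiction. The difference is in which case is treated and what that costs. The paper handles one arrow $\alpha$ entering $x$ and two arrows $\beta,\gamma$ leaving $x$: after concluding $\beta\in\soc(A)$, both $\beta$ and a maximal path starting with $\gamma$ lie in the simple socle of the \emph{single} projective $e_xA$, so $\beta$ must equal a scalar multiple of a longer path and admissibility finishes the proof in one line. You handle the dual case (one arrow leaving, two entering), where the two offending arrows $\gamma,\delta$ may start at \emph{different} vertices and cannot be compared inside one projective; this is what forces your detour through the identification $e_{s(\delta)}A\cong I_x$, the uniseriality step (which is exactly where the paper's admissibility argument reappears), and the count of arrows entering $x$ via $\soc^2(I_x)/\soc(I_x)$. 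Your organization has the small virtue of only ever using the right socle ($\delta\,\rad(A)=0$ directly gives $\delta\in\soc(e_{s(\delta)}A)$), whereas the paper's chosen case implicitly invokes the coincidence of left and right socles for selfinjective algebras; the paper's proof is shorter precisely because its choice of case keeps the whole comparison inside one indecomposable projective.
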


\begin{proof}
If there are no arrows entering vertex $x$, then the simple module corresponding to $x$ is injective, and hence, it is projective and there are no arrows leaving $x$, the case with no arrows leaving $x$ is similar. Assume there is one arrow $\alpha$ entering some vertex and two
arrows $\beta, \gamma$ leaving it. Then either $\alpha\beta=0$ or
$\alpha\gamma=0$, say $\alpha\beta=0$. Then $\beta \in \soc(A)$,
hence $\beta$ is equal to some path starting from $\gamma$, which
can not happen, since the ideal of relations is admissible. The case of one arrow leaving the vertex and two arrow entering is similar.
\end{proof}

\begin{lem}\label{Ext}
Let $A$ be a selfinjective special biserial algebra and
let $\mathcal{M}$ be a maximal system of orthogonal stable bricks in
$\smod A$. For $M \in \mathcal{M}$, $\dim\Homs(\tau^{-1}M, \oplus_{M_i \in
\mathcal{M}}M_i) \leq 2$ and $\dim\Homs(\oplus_{M_i \in
\mathcal{M}}\tau^{-1}M_i,M) \leq 2$.
\end{lem}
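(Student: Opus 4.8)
The plan is to prove both inequalities by analyzing morphisms out of (resp. into) the modules constructed in Lemma~\ref{s-proj}, using the fact that $\tau^{-1}M$ is, up to the action of $\Omega$, exactly the s-projective module with $\sstop$ equal to $M$. More precisely, by the discussion following the definition of s-projective modules, an indecomposable s-projective module with respect to $\mathcal{M}$ is of the form $\tau^{-1}\Omega(M')$ for some $M' \in \mathcal{M}$, and since $\Omega$ is an equivalence on $\smod A$ that permutes the system $\mathcal{M}$, computing $\Homs(\tau^{-1}M, \oplus M_i)$ reduces to computing morphisms out of the explicit string modules $N$ displayed in cases (1)--(3) of Lemma~\ref{s-proj}. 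I would treat the two inequalities symmetrically, since the second follows from the first by the standard duality $\Homs(\tau^{-1}X, Y) \simeq \Homs(X, \tau Y)$ (Auslander--Reiten duality) together with the left-right symmetry of the string combinatorics, so I would focus the argument on the first inequality.

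\textbf{Key steps.}
First I would fix $M \in \mathcal{M}$ and write $N$ for the indecomposable s-projective module with $\sstop(N) \simeq M$, using the explicit form of $N$ from Lemma~\ref{s-proj}. The crucial structural observation, visible in all three pictures, is that $N$ has \emph{at most two peaks} (the vertices $e_{j_k}$ sitting at the tops), and in fact the string $N$ is built so that its ``top'' consists of the vertices $e_{j_0}, \dots, e_{j_t}$, but only the two extreme ones can contribute nonzero stable maps to elements of $\mathcal{M}$. Second, I would show that any nonzero $\underline{g} : N \to M_i$ in $\smod A$, for $M_i \in \mathcal{M}$, must send a peak of $N$ to the socle of $M_i$; this uses the description of homomorphisms between string modules (graph maps in the sense of \cite{BR}, \cite{WW}), where a nonzero stable map is represented by a common quotient-string/sub-string and, because $M_i$ is a stable brick and $N$ is s-projective, the image is forced to be one-dimensional and supported on a peak. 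Third, since $N$ has at most two peaks, there are at most two independent such maps, and I would argue that maps factoring through projectives are quotiented out so that each peak contributes at most one dimension to $\bigoplus_i \Homs(N, M_i)$, giving the bound $\leq 2$.

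\textbf{Main obstacle.}
The hard part will be the second step: controlling precisely which string morphisms $N \to M_i$ survive in the stable category and verifying that each surviving map is pinned to a peak of $N$ and lands in a single brick $M_i$. The subtlety is twofold. One must rule out morphisms factoring through projective modules (so that the count is genuinely stable), which requires using selfinjectivity and the special biserial relations to understand the projective covers and the socle structure around the peaks. One must also handle the boundary/degenerate cases in Lemma~\ref{s-proj} --- the trivial directed strings at the ends, and the coincidences such as $e_{j_1} = e_{i'_1\righthalfcap}$ in case~(2) --- where the two peaks may collapse or a peak may sit at the very end of the string; in those configurations I would check directly that the dimension count does not exceed $2$, possibly dropping to $\leq 1$. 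I expect the orthogonality and brick hypotheses on $\mathcal{M}$, namely $\Homs(M_i, M_j) = 0$ for $i \neq j$ and $\Ends(M_i) \simeq \kk$, to be exactly what forces distinct peaks to map into distinct bricks and to prevent the count from blowing up, so the bookkeeping of how the two peaks of $N$ interact with the socles of the $M_i$ is where the real work lies.
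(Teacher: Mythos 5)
Your proposal has a genuine gap at its very first step: the reduction of $\Homs(\tau^{-1}M,\oplus M_i)$ to morphisms out of the s-projective modules $N$ of Lemma \ref{s-proj} rests on the claim that $\Omega$ ``permutes the system $\mathcal{M}$'', and this is false. $\mathcal{M}$ is the image of the simple modules under some stable equivalence; $\Omega(\mathcal{M})$ is a \emph{different} maximal system of orthogonal stable bricks in general (already for $\mathcal{M}$ the set of simples over a symmetric special biserial algebra, $\Omega^{-1}S$ is almost never simple). The correct relation is $\tau^{-1}M=\Omega^{-1}N$, where $N=\tau^{-1}\Omega(M)$ is the s-projective module with $\sstop(N)\simeq M$, so that $\Homs(\tau^{-1}M,M_i)\simeq\Homs(N,\Omega M_i)$ --- morphisms into $\Omega M_i$, which does not lie in $\mathcal{M}$. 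A quick sanity check shows the reduction cannot be repaired: if computing $\Homs(N,\oplus_i M_i)$ sufficed, then condition (2) in the definition of s-projectivity would give the bound $1$, not $2$, whereas the bound $2$ is attained whenever the corresponding vertex of $B$ has two outgoing arrows. Relatedly, your claim that ``only the two extreme peaks contribute'' is precisely the hard content of the lemma, and your steps 2--3 assert it without supplying a mechanism; the module $\tau^{-1}M$ (not $N$) can have arbitrarily many peaks, and raw graph-map combinatorics plus orthogonality does not obviously cut the count down to two.

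The idea you are missing, which is how the paper proves the lemma, is triangulated: work directly with $\tau^{-1}M$, computed from $M$ by adding hooks and/or deleting co-hooks, and consider the canonical diagram morphism $f:M\rightarrow\tau^{-1}M$. Every $g:\tau^{-1}M\rightarrow M_i$ satisfies $\underline{gf}=0$ --- by orthogonality when $M_i\not\simeq M$, and by the brick property together with the fact that $M$ is not of $\tau$-period $1$ when $M_i\simeq M$ --- hence $g$ factors through $\Cone(f)$ in the triangulated category $\smod A$. The paper then computes $\Cone(f)$ in each case (hooks added, co-hooks deleted, the mixed case, and the case $M\simeq\rad P$) and finds it is always a direct sum of two maximal directed strings; finally, each maximal directed string admits a stably nonzero morphism to at most one $M_i$, since otherwise one could produce a nonzero stable morphism between two distinct bricks of $\mathcal{M}$. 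This yields the bound $2$. A smaller but real inaccuracy: for the second inequality, your identity $\Homs(\tau^{-1}X,Y)\simeq\Homs(X,\tau Y)$ converts $\Homs(\oplus_i\tau^{-1}M_i,M)$ into $\Homs(\oplus_i M_i,\tau M)$, which is not the first inequality; the paper instead uses the duality $D:\smod A\rightarrow\smod A^{op}$, under which $\{DM_i\}$ is again a maximal system of orthogonal stable bricks over the selfinjective special biserial algebra $A^{op}$, and applies the first inequality there.
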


\begin{proof}
We will prove only $\dim\Homs(\tau^{-1}M, \oplus_{M_i \in
\mathcal{M}}M_i) \leq 2$, the other statement follows from the
duality. Indeed, $A$ is selfinjective special biserial if and only
if $A^{op}$ is selfinjective special biserial. There is a duality
$D: \smod A \rightarrow \smod A^{op}$, which sends $\tau_A$ to
$\tau^{-1}_{A^{op}}$ and maximal systems of orthogonal stable bricks
in $\smod A$ to maximal systems of orthogonal stable bricks in
$\smod A^{op}$. Hence, if we prove $\dim\Homs(\tau^{-1}M, \oplus_{M_i
\in \mathcal{M}}M_i) \leq 2$ for any maximal system of orthogonal
stable bricks in $\smod A^{op}$, then $\dim\Homs(\oplus_{M_i \in
\mathcal{M}}\tau^{-1}M_i,M) \leq 2$ holds for any maximal system of
orthogonal stable bricks in $\smod A$.

Let $M \in \mathcal{M}$ be a module of the form
\begin{center}
\begin{tikzpicture}[xscale=0.5, yscale=0.7]
\node (v11) at (-8,3) {$z_{l_0}$}; \node (v1) at (-7,2) {}; \node
(v2) at (-6,1) {}; \node (v12) at (-5,0) {$z_{m_0}$}; \node (v4) at
(-4,1) {$z_{m_0\righthalfcap}$}; \node (v3) at (-3,2) {}; \node
(v13) at (-2,3) {$z_{l_1}$}; \node (v14) at (-1,2) {}; \node (v5) at
(0,1) {}; \node (v6) at (2,1) {}; \node (v16) at (3,2) {}; \node
(v15) at (4,3) {$z_{l_s}$}; \node (v7) at (5,2) {}; \node (v8) at
(6,1) {$z_{m_s\lefthalfcap}$}; \node (v17) at (7,0) {$z_{m_s}$};
\node (v10) at (8,1) {}; \node (v9) at (9,2) {}; \node (v18) at
(10,3) {$z_{l_{s+1}}$}; \draw [dashed] (v1) edge (v2); \draw
[dashed] (v3) edge (v4); \draw [dashed] (v5) edge (v6); \draw
[dashed] (v7) edge (v8); \draw [dashed] (v9) edge (v10); \draw [->]
(v11) edge (v1); \draw [->] (v2) edge (v12); \draw [->] (v4) edge
(v12); \draw [->] (v13) edge (v3); \draw [->] (v13) edge (v14);
\draw [->] (v15) edge (v16); \draw [->] (v15) edge (v7); \draw [->]
(v8) edge (v17); \draw [->] (v10) edge (v17); \draw [->] (v18) edge
(v9);
\end{tikzpicture}
\end{center}
where the first or the last directed substring may be trivial. The diagram of $\tau^{-1}M$ is
formed by adding hooks $z_{m_{-1}} \leftarrow \cdots \leftarrow
z_{l_0 \lefthalfcap} \rightarrow z_{l_0}$ and $z_{l_{s+1}}
\leftarrow z_{l_{s+1}\righthalfcap} \rightarrow \cdots \rightarrow
z_{m_{s+1}}$ (case $\rn 1$) or by deleting co-hooks $z_{l_0}
\rightarrow \cdots  \rightarrow z_{m_0} \leftarrow z_{m_0
\righthalfcap}$ and $z_{m_{s}\lefthalfcap} \rightarrow z_{m_{s}}
\leftarrow \cdots \leftarrow z_{l_{s+1}}$ (case $\rn 2$) or by
adding a hook $z_{m_{-1}} \leftarrow \cdots \leftarrow z_{l_0
\lefthalfcap} \rightarrow z_{l_0}$ and deleting a co-hook
$z_{m_{s}\lefthalfcap} \rightarrow z_{m_{s}} \leftarrow \cdots
\leftarrow z_{l_{s+1}}$ (case $\rn 3$). Note that after deleting a co-hook of the form $z_{l_0}
\rightarrow \cdots  \rightarrow z_{m_0} \leftarrow z_{m_0
\righthalfcap}$ the vertex $z_{m_0
\righthalfcap}$ stays intact. If $M \simeq \rad P$, then
$\tau^{-1}M \simeq P/\soc P$ (case $\rn 4$).

We are going to use the same notation for morphisms in $\mod A$ and
the corresponding morphisms in $\smod A$. There are canonical
diagram morphisms $M \rightarrow \tau^{-1} M$ induced by the intersection of diagrams. In case ($\rn 1$)
there is a monomorphism $f : M \rightarrow \tau^{-1} M$, in case
($\rn 2$) there is an epimorphism $f : M \rightarrow \tau^{-1} M$,
in case ($\rn 3$) there is a composition of a monomorphism and
an epimorphism $f : M \rightarrow \tau^{-1} M$. The map $f$ is 
equal to zero in the stable category iff in case ($\rn 3$) module $M$ is a maximal directed string $ z_{l_0} \rightarrow \cdots
\rightarrow z_{m_0}$  (case $\rn 3$'). Note that in this last case $M$ can be a simple module
corresponding to a vertex with one incoming and one outgoing arrow. In case ($\rn 4$) there are two morphisms $f$ and
$f'$, with images equal to two indecomposable summands of $\rad P/\soc P$ (if $P$ is not uniserial), $f=cf'\neq 0$ ($c\in\kk^*$)
in $\smod A$. If $P$ is uniserial, then $f=0$.

If there is a morphism $g: \tau^{-1} M \rightarrow M_i$, then it
factors through $\Cone(f)$, since $gf=0$ in $\smod A$ by the
definition of the maximal system of orthogonal stable bricks, even
if $M_i \simeq M$. Here $\Cone(f)$ denotes the cone of a morphism $f$ in the triangulated structure on $\smod A$. Let us compute $\Cone(f)$. 

case ($\rn 1$): Since $f$ is a monomorphism, $\Cone(f) \simeq \Coker(f)
\simeq z_{m_{-1}} \leftarrow \cdots \leftarrow z_{l_0 \lefthalfcap}
\oplus z_{l_{s+1}\righthalfcap} \rightarrow \cdots \rightarrow
z_{m_{s+1}}$ is a sum of two maximal directed strings. (If the hook was trivial, then this is just a simple
module.)

case ($\rn 2$): Since $f$ is an epimorphism, $\Cone(f) \simeq
\Omega^{-1}\Ker(f) \simeq \Omega^{-1}(z_{l_0} \rightarrow \cdots
\rightarrow z_{m_0} \oplus z_{m_{s}} \leftarrow \cdots \leftarrow
z_{l_{s+1}}) \simeq z_{m_0 \righthalfcap} \leftarrow \cdots
\leftarrow z_{l_1} \leftarrow \cdots \leftarrow z_{l_0 \lefthalfcap}
\oplus z_{l_{s+1} \righthalfcap} \rightarrow \cdots \rightarrow
z_{l_s} \rightarrow \cdots \rightarrow z_{m_{s} \lefthalfcap}$ is a
sum of two maximal directed strings (in the case, where $z_{m_0}$ corresponds
to a vertex with one incoming and one outgoing arrow and the co-hook
is trivial there still is a maximal directed string ending at $z_{m_0
\righthalfcap}$ and we are going to use the notation $z_{m_0
\righthalfcap} \leftarrow \cdots \leftarrow z_{l_1} \leftarrow
\cdots \leftarrow z_{l_0 \lefthalfcap}$ for it).

case ($\rn 3$): The morphism $f$ is a composition of a monomorphism and an
epimorphism, $\Cone(f)$ can be easily computed by the octahedron axiom
or by the definition of triangles in $\smod A$. As before, $\Cone(f)  \simeq
z_{m_{-1}} \leftarrow \cdots \leftarrow z_{l_0 \lefthalfcap} \oplus
z_{l_{s+1} \righthalfcap} \rightarrow \cdots \rightarrow z_{l_s}
\rightarrow \cdots \rightarrow z_{m_{s} \lefthalfcap}$ is a sum of two maximal directed strings. In case ($\rn 3$') let $M$ be of the form $z_{m_{s}}=z_{l_0} \leftarrow \cdots
\leftarrow z_{l_{s+1}}$, then $\Cone(f)= \tau^{-1} M \oplus \Omega^{-1}(M)\simeq z_{m_{-1}} \leftarrow \cdots \leftarrow z_{l_0 \lefthalfcap} \oplus z_{l_{s+1} \righthalfcap} \rightarrow \cdots \rightarrow z_{m_{s} \lefthalfcap}=z_{l_0 \lefthalfcap}$.

case ($\rn 4$): In this case $s=0$. Assume that the projective
module $P$ is given by the relation $z_{l_0 \lefthalfcap}
\rightarrow z_{l_0} \rightarrow \cdots \rightarrow z_{m_0
\lefthalfcap} \rightarrow z_{m_0} = c z_{l_{s+1} \righthalfcap}
\rightarrow z_{l_{s+1}} \rightarrow \cdots \rightarrow z_{m_{s}
\righthalfcap} \rightarrow z_{m_s}$ ($c\in \kk^*$), where $z_{l_0
\lefthalfcap}=z_{l_{s+1} \righthalfcap}$ and $z_{m_0} = z_{m_s}$.
By the definition of triangles in
$\smod A$ we get $\Cone(f) \simeq z_{l_0 \lefthalfcap} \rightarrow z_{l_0}
\rightarrow \cdots \rightarrow z_{m_0 \lefthalfcap} \oplus
z_{l_{s+1} \righthalfcap} \rightarrow z_{l_{s+1}} \rightarrow \cdots
\rightarrow z_{m_{s}\righthalfcap}$ is again a sum of two maximal directed
strings. If $P$ is uniserial, $f=0$, then $\Cone(f)=P/\soc P\oplus \Omega^{-1}(\rad P)=P/\soc P \oplus \topp P$ is a sum of two maximal directed
strings, one of which is trivial.

Let $M_i$ be a module of the form (1), (2) or (3) from Lemma
\ref{s-proj}, assume there is a non-zero morphism $\tilde{g} :
\Cone(f) \rightarrow M_i$ in $\smod A$. Without loss of generality assume
there is a morphism $\tilde{g} :(z_{m_{-1}} \leftarrow \cdots
\leftarrow z_{l_0 \lefthalfcap}) \rightarrow M_i$. This morphism is
non-zero only in the following cases:
\begin{itemize}
\item case (1) $e_{j_{0}}=z_{l_0 \lefthalfcap}$ and the
composition of the last arrow in $e_{j_0} \leftarrow \cdots
\leftarrow e_{i_1}$ and the first arrow in $z_{m_{-1}} \leftarrow
\cdots \leftarrow z_{l_0 \lefthalfcap}$ is zero;
\item case (1) $e_{j_{t}}=z_{l_0
\lefthalfcap}$ and the composition of the last arrow in $e_{j_t}
\leftarrow \cdots \leftarrow e_{i_t}$ and the first arrow in
$z_{m_{-1}} \leftarrow \cdots \leftarrow z_{l_0 \lefthalfcap}$ is
zero;
\item case (2) $e_{i_{1}}=z_{l_0
\lefthalfcap}$ and $e_{i_1} \rightarrow \cdots \rightarrow e_{j_1}$
is a substring of $z_{m_{-1}} \leftarrow \cdots \leftarrow z_{l_0
\lefthalfcap}$;
\item case (2) $e_{i_{t}}=z_{l_0 \lefthalfcap}$ and $e_{i_t}
\rightarrow \cdots \rightarrow e_{j_{t-1}}$ is a substring of
$z_{m_{-1}} \leftarrow \cdots \leftarrow z_{l_0 \lefthalfcap}$;
\item case (3) $e_{j_{0}}=z_{l_0 \lefthalfcap}$
and the composition of the last arrow in $e_{j_0} \leftarrow \cdots
\leftarrow e_{i_1}$ and the first arrow in $z_{m_{-1}} \leftarrow
\cdots \leftarrow z_{l_0 \lefthalfcap}$ is zero;
\item case (3) $e_{i_{t}}=z_{l_0
\lefthalfcap}$ and $e_{i_t} \rightarrow \cdots \rightarrow
e_{j_{t-1}}$ is a substring of $z_{m_{-1}} \leftarrow \cdots
\leftarrow z_{l_0 \lefthalfcap}$.
\end{itemize}
Only one of all these cases can
occur, and for only one $M_i \in \mathcal{M}$, otherwise,
there would be a non-zero morphism between two objects from
$\mathcal{M}$, which is not identity in the case they coincide. With the same cases for the other maximal directed
string we get $\dim\Homs(\tau^{-1}M,\oplus_{M_i \in
\mathcal{M}}M_i) \leq 2$.
\end{proof}

\begin{rem}\label{nonzero}
We have seen that $\dim\Homs(\tau^{-1}M, \oplus_{M_i \in
\mathcal{M}}M_i) \leq 2$. Now we are going to list all the cases, where $g: \tau^{-1}M \rightarrow M_i\neq 0$ in $\smod A$, i.e. $\tilde{g}h\neq 0$, where $h: \tau^{-1}M\rightarrow \Cone(f)$. In the above notation:
\end{rem}

\begin{itemize}
\item For $M_i$ of the form (1) from Lemma \ref{s-proj} the map
 $g \neq 0$, if and only if one of the following holds (we will write out the condition only 
for one end of the diagram):

$\circ$ $M$ is of the form ($\rn 1$), $e_{j_{0}}=z_{l_0
\lefthalfcap}$ and the composition of the last arrow in $e_{j_0}
\leftarrow \cdots \leftarrow e_{i_1}$ and the first arrow in
$z_{m_{-1}} \leftarrow \cdots \leftarrow z_{l_0 \lefthalfcap}$ is
zero, additionally, the subdiagram of $\tau^{-1}M$ starting from $z_{l_0}$ and
coinciding with the subdiagram of $\Omega(M_i)$ starting from
$e_{j_0 \lefthalfcup}$ ends in a deep of $\tau^{-1}M$ which is not
a deep of $\Omega(M_i)$ or it ends on a peak of $\Omega(M_i)$ which
is not a peak of $\tau^{-1}M$. 

$\circ$ $M$ is of the form ($\rn 3$), the condition is the same as
in the previous case.

$\circ$ $M$ is of the form ($\rn 3$'), $e_{j_{0}}=z_{l_0
\lefthalfcap}$ and the composition of the last arrow in $e_{j_0}
\leftarrow \cdots \leftarrow e_{i_1}$ and the first arrow in
$z_{m_{-1}} \leftarrow \cdots \leftarrow z_{l_0 \lefthalfcap}$ is
zero.

\item For $M_i$ of the form (2) from Lemma \ref{s-proj} the map
 $g \neq 0$, if and only if one of the following holds (we will write out the condition only 
for one end of the diagram):

$\circ$ $M$ is of the form ($\rn 1$), $e_{i_{1}}=z_{l_0
\lefthalfcap}$ and $e_{i_1} \rightarrow \cdots \rightarrow e_{j_1}$
is a subdiagram of $z_{m_{-1}} \leftarrow \cdots \leftarrow z_{l_0
\lefthalfcap}$.

$\circ$ $M$ is of the form ($\rn 2$) $e_{i_{1}}=z_{l_0 \lefthalfcap}$ and $e_{i_1} \rightarrow
\cdots \rightarrow e_{j_1}$ is a subdiagram of $z_{m_{0}\righthalfcap}=z_{m_{-1}}
\leftarrow \cdots  z_{l_1}\cdots  \leftarrow z_{l_0 \lefthalfcap}$,
$z_{l_1}$ belongs to $e_{i_1} \rightarrow \cdots \rightarrow
e_{j_1}$, if $z_{l_1}=e_{j_1}$ the subdiagram of $\tau^{-1}M$
starting from $z_{l_1}$ and coinciding with the subdiagram of
$\Omega(M_i)$ starting from $e_{j_1}$ (going in the direction of $e_{j_2}$) ends in a deep of
$\tau^{-1}M$ which is not a deep of $\Omega(M_i)$ or it ends on a
peak of $\Omega(M_i)$ which is not a peak of $\tau^{-1}M$.

$\circ$ $M$ is of the form ($\rn 3$) Condition here coincides with the previous case.

$\circ$ $M$ is of the form ($\rn 3$') $e_{i_{1}}=z_{l_0 \lefthalfcap}$ and $e_{i_1} \rightarrow
\cdots \rightarrow e_{j_1}$ is a subdiagram of $z_{m_{-1}}
\leftarrow \cdots z_{l_1} \cdots  \leftarrow z_{l_0 \lefthalfcap}$.

$\circ$ $M$ is of the form ($\rn 4$) $e_{i_{1}}=z_{l_0 \lefthalfcap}$.

\item For $M_i$ of the form (3) from Lemma \ref{s-proj} condition for $g$ to be non-zero can
be easily obtained as a combination of previous cases.
\end{itemize}
In all other cases the composition is either zero or factors through
a projective module.

\begin{cor}\label{number of arrows}
Let $A$ be a selfinjective special biserial algebra, let $B$ be a  selfinjective algebra and let $F: \smod B \rightarrow \smod
A$ be an equivalence of categories. Then in the quiver of $B$ there are at most tow incoming and at most two
outgoing arrows at each vertex.
\end{cor}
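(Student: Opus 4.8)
The plan is to reduce the combinatorial bound on the quiver $Q_B$ of $B$ to the homological inequalities of Lemma~\ref{Ext}, transported along $F$. I may assume $B$ is basic, since replacing $B$ by its basic version leaves $\smod B$ unchanged up to equivalence. Write $S_1,\dots,S_n$ for the simple $B$-modules, one for each vertex of $Q_B$. The first step is the standard identification of the Gabriel quiver with the Ext-quiver: the number of arrows from $i$ to $j$ equals $\dim_{\kk}\Ext^1_B(S_i,S_j)$, so the number of arrows entering $j$ is $\sum_i\dim\Ext^1_B(S_i,S_j)$ and the number leaving $i$ is $\sum_j\dim\Ext^1_B(S_i,S_j)$.

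Next I would rewrite these Ext-spaces stably. As $B$ is selfinjective its injectives and projectives coincide, so the Auslander--Reiten formula takes the form $\Ext^1_B(S_i,S_j)\simeq D\Homs_B(\tau^{-1}S_j,S_i)$, with $D$ the $\kk$-duality. Summing, the number of arrows entering $j$ becomes $\dim\Homs_B(\tau^{-1}S_j,\oplus_i S_i)$ and the number leaving $j$ becomes $\dim\Homs_B(\oplus_i\tau^{-1}S_i,S_j)$. These are exactly the shapes appearing in Lemma~\ref{Ext}, but over $B$ rather than $A$.

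To move them to $\smod A$ I would use that $F$ carries $\{S_1,\dots,S_n\}$ bijectively onto $\mathcal{M}=\{M_1,\dots,M_n\}$ (the remark that the images of the simples form a maximal system of orthogonal stable bricks), say $FS_k\simeq M_k$, that $F$ preserves dimensions of stable $\Hom$-spaces, and --- this is the crucial input --- that $F$ commutes with the Auslander--Reiten translation, $F\tau^{-1}\simeq\tau^{-1}F$ (equivalently with the syzygy $\Omega$). Granting this, $F$ turns the two displayed quantities into $\dim\Homs_A(\tau^{-1}M_j,\oplus_i M_i)$ and $\dim\Homs_A(\oplus_i\tau^{-1}M_i,M_j)$, and Lemma~\ref{Ext} applied with $M=M_j\in\mathcal{M}$ bounds both by $2$. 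This yields at most two arrows entering and at most two leaving each vertex. The degenerate cases are harmless: a semisimple $B$ has no arrows, and local Nakayama algebras and simples of $\tau$-period one have already been excluded, so $\mathcal{M}$ is a genuine maximal system and Lemma~\ref{Ext} is available.

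The one step that is not pure bookkeeping is the commutation $F\tau\simeq\tau F$. I would justify it by recalling that $\tau$ is intrinsic to the stable category: it is read off from the almost split sequences, equivalently from the translation of the stable Auslander--Reiten quiver, and any equivalence of stable categories preserves minimal almost split morphisms and hence this translation. Making this precise --- in particular handling the Auslander--Reiten sequences whose middle term acquires a projective summand, which are the projective-middle sequences $0\to\rad(P)\to\rad(P)/\soc(P)\oplus P\to P/\soc(P)\to 0$ recalled above --- is where I expect the only real care to be needed; the rest is the homological dictionary between arrows, $\Ext^1$, and stable $\Hom$.
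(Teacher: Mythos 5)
Your proposal follows essentially the same route as the paper's own proof: identify the number of arrows with $\dim\Ext^1$ between simple $B$-modules, rewrite these via the Auslander--Reiten formula as $\dim\Homs(\tau^{-1}S_j,\oplus_i S_i)$ and $\dim\Homs(\oplus_i\tau^{-1}S_i,S_j)$, transport along $F$ to the system $\mathcal{M}$, and conclude by Lemma~\ref{Ext}. The commutation $F\tau^{-1}\simeq\tau^{-1}F$ that you single out as the crucial input is precisely what the paper uses tacitly in its unjustified step $\Homs(\tau^{-1}S_i,S)\simeq\Homs(\tau^{-1}M_i,M)$, so your write-up only makes explicit (with the correct justification via preservation of almost split sequences) what the paper leaves implicit.
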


\begin{proof}
Let $\mathcal{M} =
\{M_1,\cdots, M_n\}$ be the image of the set of simple $B$-modules under $F$. Let $S, S_i$ be simple $B$-modules sent to $M,M_i \in \mathcal{M}$. By Auslander
formula $\Ext^1(S,S_i)\simeq D\Homs(\tau^{-1}S_i,S)$ and
$\Ext^1(S_i,S)\simeq D\Homs(\tau^{-1}S,S_i)$, but
$\Homs(\tau^{-1}S_i,S) \simeq \Homs(\tau^{-1}M_i,M)$ and
$\Homs(\tau^{-1}S,S_i) \simeq \Homs(\tau^{-1}M,M_i)$. The number of
arrows from the vertex corresponding to $S$ to the vertex
corresponding to $S_i$ coincides with $\dim \Ext^1(S,S_i)$, thus by the
previous lemma there are at most tow incoming and at most two
outgoing arrows at the vertex corresponding to $S$.
\end{proof}

\begin{opr}
Let $N$ be an indecomposable s-projective module with respect to a
maximal system of orthogonal stable bricks $\mathcal{M}$. An
$A$-module $R$ is said to be the s-radical of $N$ (we denote $R$ by
\ssrad(N)) if the following conditions are satisfied:

(1) $R$ does not contain any projective direct summands.

(2) There is a projective $A$-module $P$ and a right minimal almost split morphism $R \oplus P \rightarrow
N$ in $\rmod A$, here $P$ may be zero.
\end{opr}

\begin{lem}[see Lemma 6.6 \cite{Pog}]\label{s-rad}
Let $A$ be a selfinjective special biserial algebra and
let $\mathcal{M}$ be a maximal system of orthogonal stable bricks in
$\smod A$. Let $M \in \mathcal{M}$ and let $N$ be an indecomposable
s-projective $A$-module such that $\sstop(N) \simeq M$. Then $\ssrad(N)
= R_1 \oplus R_2$, where $R_1, R_2$ are indecomposable, in the notation of Lemma \ref{s-proj}, $R_1$ and $R_2$ can be computed applying operations  $^l(-)$ and $(-)^r$ to the string corresponding to $N$:

case (1): $R_1$ and $R_2$ are of the form
\begin{center}
\begin{tikzpicture}[xscale=0.5, yscale=0.5]
\node (v5) at (-2,1) {$e_{j_0\righthalfcup}$}; \node (v6) at (-1,0)
{}; \node (v7) at (0,-1) {$e_{i'_1}$}; \node (v8) at (1,0) {}; \node
(v10) at (2,1) {}; \node (v9) at (3,2) {$e_{j_1}$}; \node (v11) at
(4,1) {}; \node (v12) at (5,0) {}; \node (v13) at (6,-1)
{$e_{i'_2}$}; \node (v14) at (7,0) {}; \node (v23) at (8,1) {};
\node (v24) at (9,1) {}; \node (v15) at (10,0) {}; \node (v16) at
(11,-1) {$e_{i'_t}$}; \node (v17) at (12,0) {}; \node (v19) at
(13,1) {}; \node (v18) at (14,2) {$e_{j_t}$}; \node (v20) at (15,1)
{}; \node (v21) at (16,0) {}; \node (v22) at (17,-1)
{$e_{i'_{t+1}}$};
\draw [->] (v6) edge (v7); \draw [->] (v8) edge (v7); \draw [->]
(v9) edge (v10); \draw [->] (v9) edge (v11); \draw [->] (v12) edge
(v13); \draw [->] (v14) edge (v13); \draw [->] (v15) edge (v16);
\draw [->] (v17) edge (v16); \draw [->] (v18) edge (v19); \draw [->]
(v18) edge (v20); \draw [->] (v21) edge (v22); \draw [dashed] (v20)
edge (v21); \draw [dashed] (v19) edge (v17); \draw [dashed] (v23)
edge (v24);
\draw [dashed] (v5) edge (v6); \draw [dashed] (v10) edge (v8); \draw
[dashed] (v11) edge (v12);
\end{tikzpicture}
\end{center}

\begin{center}
\begin{tikzpicture}[xscale=0.5, yscale=0.5]
\node (v1) at (-3,2) {$e_{j_0}$}; \node (v2) at (-4,1) {}; \node
(v3) at (-5,0) {}; \node (v4) at (-6,-1) {$e_{i'_0}$}; \node (v5) at
(-2,1) {}; \node (v6) at (-1,0) {}; \node (v7) at (0,-1)
{$e_{i'_1}$}; \node (v8) at (1,0) {}; \node (v10) at (2,1) {}; \node
(v9) at (3,2) {$e_{j_1}$}; \node (v11) at (4,1) {}; \node (v12) at
(5,0) {}; \node (v13) at (6,-1) {$e_{i'_2}$}; \node (v14) at (7,0)
{}; \node (v23) at (8,1) {}; \node (v24) at (9,1) {}; \node (v15) at
(10,0) {}; \node (v16) at (11,-1) {$e_{i'_t}$}; \node (v17) at
(12,0) {}; \node (v19) at (13,1) {$e_{j_t\lefthalfcup}$};
\draw [->] (v1) edge (v2); \draw [->] (v3) edge (v4); \draw [->]
(v1) edge (v5); \draw [->] (v6) edge (v7); \draw [->] (v8) edge
(v7); \draw [->] (v9) edge (v10); \draw [->] (v9) edge (v11); \draw
[->] (v12) edge (v13); \draw [->] (v14) edge (v13); \draw [->] (v15)
edge (v16); \draw [->] (v17) edge (v16);
\draw [dashed] (v19) edge (v17); \draw [dashed] (v23) edge (v24);
\draw [dashed] (v2) edge (v3); \draw [dashed] (v5) edge (v6); \draw
[dashed] (v10) edge (v8); \draw [dashed] (v11) edge (v12);
\end{tikzpicture}
\end{center}

case (2): $R_1$ and $R_2$ are of the form
\begin{center}
\begin{tikzpicture}[xscale=0.5, yscale=0.7]
\node (v7) at (0,-1) {$e_{i'_1\righthalfcap} $}; \node (v8) at (1,0)
{}; \node (v10) at (2,1) {}; \node (v9) at (3,2) {$e_{j_1}$}; \node
(v11) at (4,1) {}; \node (v12) at (5,0) {}; \node (v13) at (6,-1)
{$e_{i'_2}$}; \node (v14) at (7,0) {}; \node (v23) at (8,1) {};
\node (v24) at (9,1) {}; \node (v15) at (10,0) {}; \node (v16) at
(11,-1) {$e_{i'_{t-1}}$}; \node (v17) at (12,0) {}; \node (v19) at
(13,1) {}; \node (v18) at (14,2) {$e_{j_{t-1}}$}; \node (v20) at
(15,1) {}; \node (v21) at (16,0) {}; \node (v22) at (17,-1)
{$e_{i'_{t} \lefthalfcap}$}; \draw [->] (v8) edge (v7); \draw [->]
(v9) edge (v10); \draw [->] (v9) edge (v11); \draw [->] (v12) edge
(v13); \draw [->] (v14) edge (v13); \draw [->] (v15) edge (v16);
\draw [->] (v17) edge (v16); \draw [->] (v18) edge (v19); \draw [->]
(v18) edge (v20); \draw [->] (v21) edge (v22); \draw [dashed] (v20)
edge (v21); \draw [dashed] (v19) edge (v17); \draw [dashed] (v23)
edge (v24); \draw [dashed] (v10) edge (v8); \draw [dashed] (v11)
edge (v12); \node (v1) at (-1,-2) {$e_{i'_1}$}; \node (v4) at
(-2,-1) {}; \node (v3) at (-3,0) {}; \node (v2) at (-4,1)
{$e_{i_1\lefthalfcup}$}; \draw [->] (v7) edge (v1); \draw [->] (v2)
edge (v3); \draw [->] (v4) edge (v1); \draw [dashed] (v3) edge (v4);
\end{tikzpicture}
\end{center}
where $e_{i_1} \rightarrow e_{i_1\lefthalfcup} \rightarrow \cdots
\rightarrow e_{i'_1} = c e_{i_1} \rightarrow \cdots \rightarrow e_{j_1}\rightarrow \cdots
\rightarrow e_{i'_1\righthalfcap} \rightarrow  e_{i'_1}$ in $A$ ($c\in \kk^*$)
\begin{center}
\begin{tikzpicture}[xscale=0.5, yscale=0.7]
\node (v7) at (0,-1) {$e_{i'_1 \righthalfcap} $}; \node (v8) at
(1,0) {}; \node (v10) at (2,1) {}; \node (v9) at (3,2) {$e_{j_1}$};
\node (v11) at (4,1) {}; \node (v12) at (5,0) {}; \node (v13) at
(6,-1) {$e_{i'_2}$}; \node (v14) at (7,0) {}; \node (v23) at (8,1)
{}; \node (v24) at (9,1) {}; \node (v15) at (10,0) {}; \node (v16)
at (11,-1) {$e_{i'_{t-1}}$}; \node (v17) at (12,0) {}; \node (v19)
at (13,1) {}; \node (v18) at (14,2) {$e_{j_{t-1}}$}; \node (v20) at
(15,1) {}; \node (v21) at (16,0) {}; \node (v22) at (17,-1)
{$e_{i'_{t}\lefthalfcap}$}; \draw [->] (v8) edge (v7); \draw [->]
(v9) edge (v10); \draw [->] (v9) edge (v11); \draw [->] (v12) edge
(v13); \draw [->] (v14) edge (v13); \draw [->] (v15) edge (v16);
\draw [->] (v17) edge (v16); \draw [->] (v18) edge (v19); \draw [->]
(v18) edge (v20); \draw [->] (v21) edge (v22); \draw [dashed] (v20)
edge (v21); \draw [dashed] (v19) edge (v17); \draw [dashed] (v23)
edge (v24); \draw [dashed] (v10) edge (v8); \draw [dashed] (v11)
edge (v12); \node (v4) at (18,-2) {$e_{i'_t}$}; \node (v2) at
(19,-1) {}; \node (v1) at (20,0) {}; \node (v3) at (21,1)
{$e_{i_t\righthalfcup}$}; \draw [dashed] (v1) edge (v2); \draw [->]
(v3) edge (v1); \draw [->] (v2) edge (v4); \draw [->] (v22) edge
(v4);
\end{tikzpicture}
\end{center}
where $e_{i_t} \rightarrow \cdots \rightarrow e_{j_{t-1}}\rightarrow \cdots \rightarrow
e_{i'_t\lefthalfcap}\rightarrow e_{i'_t} =c e_{i_t} \rightarrow
e_{i_t\righthalfcup}\rightarrow \cdots \rightarrow e_{i'_t}$ in $A$ ($c\in \kk^*$)

case (3): $R_1$ and $R_2$ are of the form
\begin{center}
\begin{tikzpicture}[xscale=0.5, yscale=0.5]
\node (v5) at (-2,1) {$e_{j_0 \righthalfcup}$}; \node (v6) at (-1,0)
{}; \node (v7) at (0,-1) {$e_{i'_1}$}; \node (v8) at (1,0) {}; \node
(v23) at (2,1) {}; \node (v24) at (3,1) {}; \node (v15) at (4,0) {};
\node (v16) at (5,-1) {$e_{i'_{t-1}}$}; \node (v17) at (6,0) {};
\node (v19) at (7,1) {}; \node (v18) at (8,2) {$e_{j_{t-1}}$}; \node
(v20) at (9,1) {}; \node (v21) at (10,0) {}; \node (v22) at (11,-1)
{$e_{i'_{t} \lefthalfcap}$};
\draw [->] (v6) edge (v7); \draw [->] (v8) edge (v7); \draw [->]
(v15) edge (v16); \draw [->] (v17) edge (v16); \draw [->] (v18) edge
(v19); \draw [->] (v18) edge (v20); \draw [->] (v21) edge (v22);
\draw [dashed] (v20) edge (v21); \draw [dashed] (v19) edge (v17);
\draw [dashed] (v23) edge (v24);
\draw [dashed] (v5) edge (v6);
\end{tikzpicture}
\end{center}
\begin{center}
\begin{tikzpicture}[xscale=0.5, yscale=0.7]
\node (v1) at (-3,2) {$e_{j_0}$}; \node (v2) at (-4,1) {}; \node
(v3) at (-5,0) {}; \node (v4) at (-6,-1) {$e_{i'_0}$}; \node (v5) at
(-2,1) {}; \node (v6) at (-1,0) {}; \node (v7) at (0,-1)
{$e_{i'_1}$}; \node (v8) at (1,0) {}; \node (v23) at (2,1) {}; \node
(v24) at (3,1) {}; \node (v15) at (4,0) {}; \node (v16) at (5,-1)
{$e_{i'_{t-1}}$}; \node (v17) at (6,0) {}; \node (v19) at (7,1) {};
\node (v18) at (8,2) {$e_{j_{t-1}}$}; \node (v20) at (9,1) {}; \node
(v21) at (10,0) {}; \node (v22) at (11,-1) {$e_{i'_{t}
\lefthalfcap}$}; \draw [->] (v1) edge (v2); \draw [->] (v3) edge
(v4); \draw [->] (v1) edge (v5); \draw [->] (v6) edge (v7); \draw
[->] (v8) edge (v7); \draw [->] (v15) edge (v16); \draw [->] (v17)
edge (v16); \draw [->] (v18) edge (v19); \draw [->] (v18) edge
(v20); \draw [->] (v21) edge (v22); \draw [dashed] (v20) edge (v21);
\draw [dashed] (v19) edge (v17); \draw [dashed] (v23) edge (v24);
\draw [dashed] (v2) edge (v3); \draw [dashed] (v5) edge (v6); \node
(v9) at (12,-2) {$e_{i'_t}$}; \node (v10) at (13,-1) {}; \node (v12)
at (14,0) {}; \node (v11) at (15,1) {$e_{i_t \righthalfcup}$}; \draw
[->] (v22) edge (v9); \draw [->] (v10) edge (v9); \draw [->] (v11)
edge (v12); \draw [dashed] (v12) edge (v10);
\end{tikzpicture}
\end{center}
where $e_{i_t} \rightarrow \cdots\rightarrow e_{j_{t-1}}\rightarrow \cdots \rightarrow
e_{i'_t\lefthalfcap}\rightarrow e_{i'_t} =c e_{i_t} \rightarrow
e_{i_t\righthalfcup}\rightarrow \cdots \rightarrow e_{i'_t}$ in $A$ ($c\in \kk^*$). Note
that $R_1$ may be zero.
\end{lem}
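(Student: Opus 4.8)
The plan is to realize $\ssrad(N)$ as the non-projective part of the source of a minimal right almost split morphism into $N$, and then to read that source off from the string of $N$ using the combinatorial description of Auslander--Reiten sequences recalled before the lemma. Since $N$ is s-projective we have $N \simeq \tau^{-1}\Omega(M)$ (equivalently $\tau(N) \simeq \Omega(M)$), so $N$ is indecomposable and non-projective, and its minimal right almost split morphism is the right-hand map $E \to N$ of the Auslander--Reiten sequence $0 \to \tau(N) \to E \to N \to 0$. Writing $E = R \oplus P$ with $P$ the sum of all (projective-injective) projective summands and $R$ free of projective summands, the definition of s-radical forces $\ssrad(N) = R$. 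Hence everything reduces to computing the middle term $E$ and discarding its projective part.

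First I would treat the generic situation $N \not\simeq P'/\soc(P')$. Then $E \simeq M_{c^r} \oplus M_{{}^l c}$, where $c$ is the string of $N$ and $c^r$, ${}^l c$ are produced by the operations $(-)^r$ and ${}^l(-)$ (adjoining a co-hook when possible, deleting a hook otherwise), and moreover $E$ carries no projective summand; thus $\{R_1,R_2\} = \{M_{c^r}, M_{{}^l c}\}$. In the exceptional situation $N \simeq P'/\soc(P')$ I would instead use the projective Auslander--Reiten sequence $0 \to \rad(P') \to \rad(P')/\soc(P') \oplus P' \to P'/\soc(P') \to 0$, so that $E \simeq \rad(P')/\soc(P') \oplus P'$; after deleting the projective summand $P'$ one is left with $\rad(P')/\soc(P')$, which splits as a direct sum of two directed strings (one of them trivial when $P'$ is uniserial). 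Because string modules are indecomposable, each surviving summand is automatically indecomposable, so the remaining work is purely to match strings.

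The core of the argument is the per-end bookkeeping, performed separately for the three shapes of $N$ from Lemma \ref{s-proj}. For each of the two ends I would decide whether $(-)^r$ (respectively ${}^l(-)$) adds a co-hook or deletes a hook, and this is dictated exactly by the continuation data recorded alongside the diagrams in Lemma \ref{s-proj}. When an end of $N$ terminates in a maximal directed string ending at a genuine deep with no continuing path, only a hook deletion is available; this removes that directed substring together with its peak, producing the truncations $e_{j_0\righthalfcup} \to \cdots$ and $\cdots \to e_{j_t\lefthalfcup}$ of case (1) and the left truncation of case (3). When instead the socle relation $e_{i'_k} \leftarrow \cdots \leftarrow e_{i_k} = c_k(\cdots)$ supplies a continuing path past the terminal deep, a co-hook can be adjoined; this is what creates the enlarged ends $e_{i'_1\righthalfcap} \to e_{i'_1} \leftarrow \cdots \leftarrow e_{i_1\lefthalfcup}$ and $e_{i'_t\lefthalfcap} \to e_{i'_t} \leftarrow \cdots \leftarrow e_{i_t\righthalfcup}$ of case (2) and the right end of case (3). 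Carrying out the correct operation at each end reproduces the displayed diagrams for $R_1$ and $R_2$, and one checks in passing that the results are not projective, so no further summand is stripped.

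I expect the main obstacle to be precisely this determination of co-hook addition versus hook deletion at each end: it rests on the fine local structure of $A$ near the terminal deeps (through Lemma \ref{one in, one out} and the admissibility of the ideal) and on reading the continuation relations of Lemma \ref{s-proj} correctly, and it must be combined with careful treatment of the degenerate configurations --- trivial terminal substrings, the vanishing summand $R_1 = 0$ in case (3) (occurring when the left hook deletion consumes the whole string, e.g. for small $t$ or in the uniserial $P'/\soc(P')$ case), and the uniserial-versus-biserial dichotomy when $N \simeq P'/\soc(P')$. Once these boundary cases are dispatched, matching the resulting strings to the stated diagrams is routine.
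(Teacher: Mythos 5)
Your proposal is correct and follows exactly the route the paper intends: the paper states this lemma without a separate proof precisely because, as you argue, $\ssrad(N)$ is by definition the non-projective part of the middle term of the Auslander--Reiten sequence ending at $N$, which the preceding description of such sequences computes as $M_{c^r}\oplus M_{{}^lc}$ (or as $\rad(P')/\soc(P')$ when $N\simeq P'/\soc(P')$), after which the hook/co-hook bookkeeping at each end yields the displayed diagrams. Your handling of the degenerate configurations (projective summand stripping, the uniserial case, $R_1=0$) matches what the paper's statement implicitly requires.
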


\begin{cor}\label{simple top}
Let $\smod B \rightarrow \smod A$ be an equivalence of categories, where $B$ is selfinjective and $A$ is selfinjective special biserial. Let $\mathcal{M} =
\{M_1,\cdots, M_n\}$ be the image of the set of simple $B$-modules and let
$\{N_1,\cdots, N_n\}$ be the image of the corresponding modules of
the form $P/\soc P$, where $P$ is indecomposable projective. Then $\ssrad(N_i)$ is the image of the module of
the form $\rad P/ \soc P$. Moreover, indecomposable summands of
$\ssrad(N_i)$ have simple $\sstop$, that is, if $\ssrad(N_i) = R_1
\oplus R_2$, where $R_1, R_2$ are indecomposable, then $\dim\Homs(R_j,\oplus_{M_i \in
\mathcal{M}}M_i)=1$ for non-zero $R_j$.
\end{cor}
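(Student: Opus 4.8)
The plan is to prove the two assertions separately: the identification $\ssrad(N_i)=F(\rad P/\soc P)$ will come from the preservation of almost split sequences under a stable equivalence, while the statement about $\sstop$ will be reduced to an elementary fact about tops of modules over $B$.

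For the first assertion, recall that the almost split sequence in $\rmod B$ ending at $P/\soc P$ is
$$0 \to \rad P \to \rad P/\soc P \oplus P \to P/\soc P \to 0,$$
so $\rad P/\soc P$ is exactly the non-projective part of its middle term. The modules $P/\soc P$ and $\rad P \simeq \Omega(\topp P)$ are non-projective (as $B$ is not semisimple), and a stable equivalence carries almost split sequences to almost split sequences up to projective summands. Hence the almost split sequence in $\rmod A$ ending at $N_i=F(P/\soc P)$ has middle term $F(\rad P/\soc P)$ together with a projective summand, and by the definition of $\ssrad$ this identifies $\ssrad(N_i)$ with the image $F(\rad P/\soc P)$ (represented by its non-projective part).

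For the ``moreover'' part I would first record an elementary fact: if $U$ is an indecomposable non-projective $B$-module, then every non-zero homomorphism $U\to S_j$ to a simple module is non-zero in $\smod B$. Indeed, such a map can factor only through the projective cover $P_j$ of $S_j$, and a map $U\to P_j$ whose composite with $P_j\to S_j$ is non-zero must be onto, forcing $P_j$ to be a summand of $U$ --- impossible. Consequently $\dim\Homs_B(U,\oplus_j S_j)=\dim\Hom_B(U,\oplus_j S_j)=\dim\topp U$ for every indecomposable non-projective $U$. Now write $\ssrad(N_i)=R_1\oplus R_2=F(\rad P/\soc P)$ as in Lemma \ref{s-rad} and the first part, and let $U_k=F^{-1}(R_k)$ be the corresponding non-projective summands of $\rad P/\soc P$. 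Since $F$ is an equivalence,
$$\dim\Homs_A(R_k,\oplus_{M_i\in\mathcal{M}}M_i)=\dim\Homs_B(U_k,\oplus_j S_j)=\dim\topp U_k,$$
so the claim is precisely that each non-zero $U_k$ has simple top. By Corollary \ref{number of arrows} the vertex $v=\topp P$ has at most two outgoing arrows, hence $\dim\topp(\rad P/\soc P)\le\dim\rad P/\rad^2 P\le 2$ and $\sum_k\dim\topp U_k\le 2$. When both $R_1$ and $R_2$ are non-zero this forces $\dim\topp U_1=\dim\topp U_2=1$, and we are done.

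The main obstacle is the remaining case in which $\ssrad(N_i)$ is indecomposable (the case $R_1=0$ of Lemma \ref{s-rad}(3)), where a priori the single summand could carry a two-dimensional top, i.e. $v$ could have two outgoing arrows while $\rad P/\soc P$ stays indecomposable. Here I would fall back on the explicit string description of $R_2$ furnished by Lemma \ref{s-rad}, together with the list of non-vanishing maps in Remark \ref{nonzero}: one checks that among the peaks of $R_2$ exactly one admits a non-zero stable map into $\mathcal{M}$, so that $\dim\Homs_A(R_2,\oplus_{M_i\in\mathcal{M}}M_i)=1$ and, transporting back, $v$ must have a single outgoing arrow. This case analysis on the shape of the string, rather than the clean homological counting above, is where the special biserial combinatorics does the real work.
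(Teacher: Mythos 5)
Your first two steps are sound and essentially coincide with the paper's argument: the identification of $\ssrad(N_i)$ with the image of $\rad P/\soc P$ via transport of almost split sequences, the observation that a non-zero map from an indecomposable non-projective module to a simple module cannot factor through a projective (so $\dim\Homs_B(U,\oplus_j S_j)=\dim\topp U$), and the counting argument via Corollary \ref{number of arrows} when both $R_1$ and $R_2$ are non-zero. (One small caveat on the first step: the preservation result you invoke is \cite[Proposition 2.3]{AR6}, which requires that the map from the left-hand term into the non-projective part of the middle term be stably non-zero; the paper verifies this hypothesis on the special biserial side $A$ and transports AR sequences from $A$ to $B$, whereas you transport from $B$ to $A$, where this non-vanishing is not a priori known.)

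The genuine gap is in the case you yourself flag as the hard one, $\ssrad(N)=R$ indecomposable. Your plan is to compute $\dim\Homs_A(R,\oplus_{M_i\in\mathcal{M}}M_i)$ directly from the string shape of $R$ using Remark \ref{nonzero}, but Remark \ref{nonzero} only describes when maps $\tau^{-1}M\rightarrow M_i$ with $M\in\mathcal{M}$ are stably non-zero; $R=\ssrad(N)$ is not of the form $\tau^{-1}M$ for any $M\in\mathcal{M}$, so the cited tool says nothing about maps out of $R$, and ``one checks that exactly one peak of $R_2$ admits a non-zero stable map into $\mathcal{M}$'' is exactly the assertion that needs proof, not a consequence of the quoted lemmas. (The paper's Lemma \ref{s-top}, which does describe $\sstop(R_j)$, comes after this corollary and is itself deduced by the switching trick below, so it is not available here.) The paper avoids analysing maps out of $R$ altogether: from $\ssrad(N)$ indecomposable it first deduces, via the string description of AR sequences over $A$, that $N$ is a maximal directed string, hence $M=\sstop(N)$ is a maximal directed string or a simple module; it then uses the chain $\dim\Homs_A(R,\oplus_i M_i)=\dim\topp_B(\rad P/\soc P)=\dim\Ext^1_B(S,\oplus_i S_i)=\dim\Homs_A(\tau^{-1}(\oplus_i M_i),M)$ (Auslander's formula on the $B$-side), and finally computes the last quantity to be $1$ by applying the analysis of Lemma \ref{Ext} and Remark \ref{nonzero} to the module $DM$ over $A^{op}$, where $D:\smod A\rightarrow \smod A^{op}$ is the duality (with Lemma \ref{one in, one out} covering the case where $DM$ is simple). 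This pivot --- replacing the intractable space $\Homs_A(R,\oplus_i M_i)$ by $\Homs_A(\tau^{-1}(\oplus_i M_i),M)$, which is of the type the machinery was built for --- is the missing idea in your proposal; without it, your hard case reduces to an unsupported claim.
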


\begin{proof}
Note that for an indecomposable projective module $P$, $\dim \topp(
\rad P/ \soc P)$ corresponds to the number of arrows going out of the
vertex corresponding to $P$. By Corollary \ref{number of arrows} there are at most two arrows going out of the vertex
corresponding to $P$; thus, if $\rad P/ \soc P$ has two non-zero
non-projective summands $\tilde{R}_1$ and $\tilde{R}_2$, then both
$R_1$ and $R_2$ are non-zero, hence
$\dim\Homs(\tilde{R}_j,\oplus_{S_i \in \mathcal{S}}S_i)=1$ and
$\dim\Homs(R_j,\oplus_{M_i \in \mathcal{M}}M_i)=1$.

Assume now that for some $N$ the module $\ssrad(N) = R$ is indecomposable. By
the description of the Auslander-Reiten triangles in $\smod A$, the diagram of $N$
is a maximal directed string (which may or may not coincide with $P/\soc P$
for a uniserial projective module $P$). Then, $M=\sstop(N)$ is a maximal directed string
or a simple module (in case $P/\soc P$). The $A^{op}$-module $DM$ is also a maximal directed
string or a simple module. By arguments analogous to the proof of Lemma
\ref{Ext} and Remark \ref{nonzero}, $\dim\Homs(\tau^{-1}DM, \oplus_{M_i \in \mathcal{M}}DM_i)
= 1$. Note that, if $DM$ is simple corresponding to a vertex with one
incoming and one outgoing arrow the result follows from Lemma \ref{one in, one out}. As
$\dim\Homs(\tau^{-1}DM, \oplus_{M_i \in \mathcal{M}}DM_i) = 1 =
\dim\Homs(\tau^{-1}\oplus_{M_i \in \mathcal{M}}M_i, M)$, there
is one arrow going out of the vertex corresponding to $P$ and
$\dim\Homs(R,\oplus_{M_i \in \mathcal{M}}M_i)=1$.
\end{proof}

In the notation of Corollary \ref{simple top}, for an indecomposable projective $B$-module $P$ with $\topp (P)=S$, the dimension of $\topp (\rad(P))$ corresponds to the dimension of $\Ext^1(S,\oplus S_i)$, where $\oplus S_i$ is the sum of representatives of iso-classes of simple $B$ modules. If $\Ext^1(S,S_i)\neq 0$, then $S_i$ is a summand of $\topp (\rad(P))$. Since $\Ext^1(S,S_i) \simeq \Homs(\tau^{-1}S_i,S)$, using the equivalence of stable categories, $\sstop(R)$ corresponds to $\Homs(\tau^{-1}M_i,M)$. The
following lemma follows easily from Remark \ref{nonzero}. (The roles
of $M$ and $M_i$ are switched.)

\begin{lem}[see Lemma 6.9 \cite{Pog}]\label{s-top}
Let $A$ be a selfinjective special biserial algebra and
let $\mathcal{M}$ be a maximal system of orthogonal stable bricks in
$\smod A$, assume additionally, that $\mathcal{M}$ is an image of the set of simple $B$-modules
under some stable equivalence, where $B$ is selfijective. Let $M \in \mathcal{M}$ be as in Lemma \ref{s-proj}. Moreover, let $N$ be an indecomposable
s-projective $A$-module such that $\sstop(N) \simeq M$, in the notation of Lemma \ref{s-rad}, $\ssrad(N)
= R_1 \oplus R_2$, then $\sstop(R_1)$ and $\sstop(R_2)$ are of the
following form:

case (1): $\sstop(R_1)$ is
\begin{center}
\begin{tikzpicture}[xscale=0.5, yscale=0.5]
\node (v11) at (-8,3) {$e_{j_0\righthalfcup}$}; \node (v1) at (-7,2)
{}; \node (v2) at (-6,1) {}; \node (v12) at (-5,0) {$z_{m_0}$};
\node (v4) at (-4,1) {}; \node (v3) at (-3,2) {}; \node (v13) at
(-2,3) {$z_{l_1}$}; \node (v14) at (-1,2) {}; \node (v5) at (0,1)
{}; \node (v6) at (2,1) {}; \node (v16) at (3,2) {}; \node (v15) at
(4,3) {$z_{l_s}$}; \node (v7) at (5,2) {}; \node (v8) at (6,1) {};
\node (v17) at (7,0) {$z_{m_s}$}; \node (v10) at (8,1) {}; \node
(v9) at (9,2) {}; \node (v18) at (10,3) {$z_{l_{s+1}}$}; \draw
[dashed] (v1) edge (v2); \draw [dashed] (v3) edge (v4); \draw
[dashed] (v5) edge (v6); \draw [dashed] (v7) edge (v8); \draw
[dashed] (v9) edge (v10); \draw [->] (v11) edge (v1); \draw [->]
(v2) edge (v12); \draw [->] (v4) edge (v12); \draw [->] (v13) edge
(v3); \draw [->] (v13) edge (v14); \draw [->] (v15) edge (v16);
\draw [->] (v15) edge (v7); \draw [->] (v8) edge (v17); \draw [->]
(v10) edge (v17); \draw [->] (v18) edge (v9);
\end{tikzpicture}
\end{center}
where either the diagrams of $\sstop(R_1)$ and $R_1$ coincide or the subdiagram of $\sstop(R_1)$ starting from
$e_{j_0\righthalfcup}$ and coinciding with the subdiagram of $R_1$
starting from $e_{j_0\righthalfcup}$ ends on a deep of $\sstop(R_1)$
which is not a deep of $R_1$ or it ends on a peak of $R_1$ which is
not a peak of $\sstop(R_1)$ (note that this guarantees the existence
of a non-zero morphism from $R_1$ to $\sstop(R_1)$ which sends
$e_{j_0\righthalfcup}$ to $e_{j_0\righthalfcup}$ and is non-zero in
the stable category. Note also that this intersection can consist of
one vertex.);

$\sstop(R_2)$ is
\begin{center}
\begin{tikzpicture}[xscale=0.5, yscale=0.5]
\node (v11) at (-8,3) {$e_{j_t\lefthalfcup}$}; \node (v1) at (-7,2)
{}; \node (v2) at (-6,1) {}; \node (v12) at (-5,0) {$z_{m_0}$};
\node (v4) at (-4,1) {}; \node (v3) at (-3,2) {}; \node (v13) at
(-2,3) {$z_{l_1}$}; \node (v14) at (-1,2) {}; \node (v5) at (0,1)
{}; \node (v6) at (2,1) {}; \node (v16) at (3,2) {}; \node (v15) at
(4,3) {$z_{l_s}$}; \node (v7) at (5,2) {}; \node (v8) at (6,1) {};
\node (v17) at (7,0) {$z_{m_s}$}; \node (v10) at (8,1) {}; \node
(v9) at (9,2) {}; \node (v18) at (10,3) {$z_{l_{s+1}}$}; \draw
[dashed] (v1) edge (v2); \draw [dashed] (v3) edge (v4); \draw
[dashed] (v5) edge (v6); \draw [dashed] (v7) edge (v8); \draw
[dashed] (v9) edge (v10); \draw [->] (v11) edge (v1); \draw [->]
(v2) edge (v12); \draw [->] (v4) edge (v12); \draw [->] (v13) edge
(v3); \draw [->] (v13) edge (v14); \draw [->] (v15) edge (v16);
\draw [->] (v15) edge (v7); \draw [->] (v8) edge (v17); \draw [->]
(v10) edge (v17); \draw [->] (v18) edge (v9);
\end{tikzpicture}
\end{center}
where either the diagrams of $\sstop(R_2)$ and $R_2$ coincide or the subdiagram of $\sstop(R_2)$ starting from
$e_{j_t\lefthalfcup}$ and coinciding with the subdiagram of $R_2$
starting from $e_{j_t\lefthalfcup}$ ends on a deep of $\sstop(R_2)$
which is not a deep of $R_2$ or it ends on a peak of $R_2$ which is
not a peak of $\sstop(R_2)$;

case (2): $\sstop(R_1)$ is
\begin{center}
\begin{tikzpicture}[xscale=0.5, yscale=0.5]
\node (v11) at (-8,3) {$e_{i_1\lefthalfcup}$}; \node (v1) at (-7,2)
{}; \node (v2) at (-6,1) {}; \node (v12) at (-5,0) {$z_{m_0}$};
\node (v4) at (-4,1) {}; \node (v3) at (-3,2) {}; \node (v13) at
(-2,3) {$z_{l_1}$}; \node (v14) at (-1,2) {}; \node (v5) at (0,1)
{}; \node (v6) at (2,1) {}; \node (v16) at (3,2) {}; \node (v15) at
(4,3) {$z_{l_s}$}; \node (v7) at (5,2) {}; \node (v8) at (6,1) {};
\node (v17) at (7,0) {$z_{m_s}$}; \node (v10) at (8,1) {}; \node
(v9) at (9,2) {}; \node (v18) at (10,3) {$z_{l_{s+1}}$}; \draw
[dashed] (v1) edge (v2); \draw [dashed] (v3) edge (v4); \draw
[dashed] (v5) edge (v6); \draw [dashed] (v7) edge (v8); \draw
[dashed] (v9) edge (v10); \draw [->] (v11) edge (v1); \draw [->]
(v2) edge (v12); \draw [->] (v4) edge (v12); \draw [->] (v13) edge
(v3); \draw [->] (v13) edge (v14); \draw [->] (v15) edge (v16);
\draw [->] (v15) edge (v7); \draw [->] (v8) edge (v17); \draw [->]
(v10) edge (v17); \draw [->] (v18) edge (v9);
\end{tikzpicture}
\end{center}
where either the diagrams of $\sstop(R_1)$ and $R_1$ coincide or the subdiagram of $\sstop(R_1)$ starting from
$e_{i_1\lefthalfcup}$ and coinciding with the subdiagram of $R_1$
starting from $e_{i_1\lefthalfcup}$ ends on a deep of $\sstop(R_1)$,
which is not a deep of $R_1$ or it ends on a peak of $R_1$ which is
not a peak of $\sstop(R_1)$;

$\sstop(R_2)$ is
\begin{center}
\begin{tikzpicture}[xscale=0.5, yscale=0.5]
\node (v11) at (-8,3) {$e_{i_t\righthalfcup}$}; \node (v1) at (-7,2)
{}; \node (v2) at (-6,1) {}; \node (v12) at (-5,0) {$z_{m_0}$};
\node (v4) at (-4,1) {}; \node (v3) at (-3,2) {}; \node (v13) at
(-2,3) {$z_{l_1}$}; \node (v14) at (-1,2) {}; \node (v5) at (0,1)
{}; \node (v6) at (2,1) {}; \node (v16) at (3,2) {}; \node (v15) at
(4,3) {$z_{l_s}$}; \node (v7) at (5,2) {}; \node (v8) at (6,1) {};
\node (v17) at (7,0) {$z_{m_s}$}; \node (v10) at (8,1) {}; \node
(v9) at (9,2) {}; \node (v18) at (10,3) {$z_{l_{s+1}}$}; \draw
[dashed] (v1) edge (v2); \draw [dashed] (v3) edge (v4); \draw
[dashed] (v5) edge (v6); \draw [dashed] (v7) edge (v8); \draw
[dashed] (v9) edge (v10); \draw [->] (v11) edge (v1); \draw [->]
(v2) edge (v12); \draw [->] (v4) edge (v12); \draw [->] (v13) edge
(v3); \draw [->] (v13) edge (v14); \draw [->] (v15) edge (v16);
\draw [->] (v15) edge (v7); \draw [->] (v8) edge (v17); \draw [->]
(v10) edge (v17); \draw [->] (v18) edge (v9);
\end{tikzpicture}
\end{center}
where either the diagrams of $\sstop(R_2)$ and $R_2$ coincide or the subdiagram of $\sstop(R_2)$ starting from
$e_{i_t\righthalfcup}$ and coinciding with the subdiagram of $R_2$
starting from $e_{i_t\righthalfcup}$ ends on a deep of $\sstop(R_2)$
which is not a deep of $R_2$ or it ends on a peak of $R_2$ which is
not a peak of $\sstop(R_2)$;

case (3) is analogous to case (1)-$R_1$ for $R_1$ and case (2)-$R_2$ for
$R_2$.

\end{lem}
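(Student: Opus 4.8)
The plan is to reduce the whole statement to Remark~\ref{nonzero}, read with the roles of $M$ and $M_i$ interchanged, exactly as indicated just before the statement. First I would fix the meaning of $\sstop(R_j)$. By Corollary~\ref{simple top} each nonzero indecomposable summand $R_j$ of $\ssrad(N)$ satisfies $\dim\Homs(R_j,\oplus_{M_i\in\mathcal{M}}M_i)=1$, so $\sstop(R_j)$ is the unique $M'\in\mathcal{M}$ receiving a nonzero stable map from $R_j$. Using that $\mathcal{M}$ is the image of the simple $B$-modules, together with the Auslander--Reiten formula and the equivalence of stable categories (the computation carried out in the paragraph preceding the lemma), this $M'$ is characterized as the unique element of $\mathcal{M}$ with $\Homs(\tau^{-1}M',M)\neq 0$ attached to the end of $M=\sstop(N)$ on which the corresponding operation ${}^l(-)$ or $(-)^r$ of Lemma~\ref{s-rad} was performed. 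Since $\dim\Homs(\tau^{-1}(\oplus_{M_i\in\mathcal{M}}M_i),M)\le 2$ by Lemma~\ref{Ext}, there are at most two such $M'$, one for each end of $M$, and these are assigned to $R_1$ and $R_2$ according to the side.

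Next I would invoke Remark~\ref{nonzero} with $M$ and $M_i$ swapped. Our fixed $M$, being of form (1), (2) or (3) of Lemma~\ref{s-proj}, now plays the role of the target $M_i$ of the remark, while the unknown $M'=\sstop(R_j)$ plays the role of the source $M$. Reading the bullets of the remark end by end then determines, for each end of $M$, which source string $M'$ admits a nonzero map $\tau^{-1}M'\to M$; this $M'$ is the candidate for $\sstop(R_j)$, and its shape is pinned down further by the constraint $R_j\simeq\tau^{-1}\Omega(M')$ coming from Lemma~\ref{s-rad}. The boundary clause in the statement — that the common directed subdiagram of $\sstop(R_j)$ and $R_j$ terminates on a deep of $\sstop(R_j)$ which is not a deep of $R_j$, or on a peak of $R_j$ which is not a peak of $\sstop(R_j)$ — is the standard criterion, identical in nature to the ones appearing in Lemma~\ref{Ext} and Remark~\ref{nonzero}, for the canonical intersection morphism $R_j\to\sstop(R_j)$ to be nonzero in $\smod A$; it is precisely what certifies that the $M'$ just produced is genuinely $\sstop(R_j)$.

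It then remains to match the two ends of $M$ in each case. For $M$ of type (1) both ends are of the same kind, and the two bullets of Remark~\ref{nonzero} for a target of type (1) produce the two diagrams drawn for $\sstop(R_1)$ and $\sstop(R_2)$, anchored at $e_{j_0\righthalfcup}$ and $e_{j_t\lefthalfcup}$ respectively. Type (2) is handled by the bullets for a target of type (2), giving the diagrams anchored at $e_{i_1\lefthalfcup}$ and $e_{i_t\righthalfcup}$. Finally, $M$ of type (3) has ends of two different kinds, so it combines one end of type (1) with one end of type (2); reading the relevant bullets yields exactly the mixed conclusion, namely case (1)-$R_1$ for $R_1$ and case (2)-$R_2$ for $R_2$.

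I expect the only real difficulty to be bookkeeping in the degenerate boundary situations already isolated in the proof of Lemma~\ref{Ext} and in Remark~\ref{nonzero}: trivial directed substrings at an end, uniserial projectives (case ($\rn 4$), where one of $R_1,R_2$ may be zero so that $\ssrad(N)$ is indecomposable), the simple modules sitting at a vertex with one incoming and one outgoing arrow, and case ($\rn 3$'). Each of these was resolved while establishing the remark — in particular Lemma~\ref{one in, one out} disposes of the one-in/one-out vertices — so they transfer directly, and the remaining work is purely to track which end of $M$ feeds which summand $R_j$ and to verify that the peak/deep condition of the statement is the mirror image of the nonvanishing condition of the remark under the role swap.
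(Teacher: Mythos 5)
Your proposal is correct and follows essentially the paper's own route: the paper proves this lemma exactly by the paragraph preceding it (identifying $\sstop(R_j)$ with the nonvanishing of $\Homs(\tau^{-1}M_i,M)$ via the Auslander--Reiten formula and the stable equivalence) together with the remark that the statement ``follows easily from Remark~\ref{nonzero}'' with the roles of $M$ and $M_i$ switched, which is precisely your reduction, including the end-by-end matching and the treatment of the degenerate cases. One caveat: your parenthetical constraint $R_j\simeq\tau^{-1}\Omega(M')$ is false in general --- that formula describes the s-projective module $Q$ with $\sstop(Q)=M'$, not the s-radical summand $R_j$ --- but it is not load-bearing, since the shape of $R_j$ is already supplied by Lemma~\ref{s-rad} and the deep/peak condition is what certifies that the intersection morphism $R_j\to\sstop(R_j)$ is nonzero in $\smod A$.
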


We are going to use the following criterion to prove that a
selfinjective algebra stably equivalent to a special biserial
algebra is stably biserial. Here we cite only the part of the result that we need. Note that this proposition was reproved
in \cite{AIP}:

\begin{prop}[Proposition 2.7 \cite{Pog}, Proposition 7.8 \cite{AIP}]\label{crit}
If a selfinjective algebra $B$ satisfies the following
conditions, then $B$ is Morita equivalent to an algebra, that satisfies conditions (a) and (c) from Definition \ref{stb}.

(a) For each indecomposable projective module $P$, we have
$\rad(P)/\soc(P) = X' \oplus X''$, (where $X' \neq 0$) such that
$\topp(X')$, $\topp(X'')$, $\soc(X')$, $\soc(X'')$ are simple
modules (or zero, in case $X''$ is zero).

(b) Let $X = X'$ or $X''$, and let $Q$ be the projective cover of
$X$. Then $X$ is non-projective and we denote by $p$ the epimorphism
$Q/\soc(Q) \rightarrow X$. Suppose that $\rad(Q)/\soc(Q) = Y_1
\oplus Y_2$, where $Y_1$ and $Y_2$ are indecomposable modules. Then,
for irreducible morphisms $w_1 : Y_1 \rightarrow Q/\soc(Q)$, $w_2 :
Y_2 \rightarrow Q/\soc(Q)$, $pw_1$ or $pw_2$ factors through a
projective module.

\end{prop}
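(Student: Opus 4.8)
The plan is to produce a basic algebra Morita equivalent to $B$, write it as $\kk Q/I$ with $I$ admissible, and verify conditions (a) and (c) of Definition \ref{stb} directly. I first record the module-theoretic reading of these conditions for a selfinjective $B$: the number of arrows leaving a vertex $v$ equals $\dim\topp(\rad P_v)$, where $P_v$ is the indecomposable projective with top $S_v$, while the number of arrows entering $v$ is the same quantity computed for $B^{op}$. Throughout I will use the $\kk$-duality $D:\rmod B\to\rmod B^{op}$. Since $DP_v$ is again indecomposable projective and $D$ interchanges radical and socle, $D(\rad P_v/\soc P_v)\cong \rad(DP_v)/\soc(DP_v)$; as having simple top and simple socle is self-dual, hypothesis (a) holds for $B$ if and only if it holds for $B^{op}$.

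I would first dispose of condition (a) of Definition \ref{stb}. Hypothesis (a) presents $\rad P_v/\soc P_v$ as a sum of at most two indecomposable summands, each with simple top; since $\topp(\rad P_v/\soc P_v)=\topp(\rad P_v)$ (the socle lies in $\rad^2 P_v$ away from the excluded Nakayama case, forced here by $X'\neq0$), at most two arrows leave each vertex. Applying the same bound to $B^{op}$, which satisfies hypothesis (a) by the self-duality just noted, shows that at most two arrows enter each vertex of $B$.

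The heart of the argument is the claim that \emph{every indecomposable summand $X$ of every $\rad P/\soc P$ is uniserial}, which I would extract from hypothesis (b). Writing $S_a=\topp X$ (simple by (a)), taking the projective cover $Q=P_a$, and writing $\rad Q/\soc Q=Y_1\oplus Y_2$ with the canonical epimorphism $p:Q/\soc Q\to X$, I observe that if $X$ branched immediately below $S_a$ then both composites $pw_1$ and $pw_2$ would be nonzero in $\smod B$, contradicting (b); hence $X$ has a unique continuation $Y_i$, to which (a) and (b) again apply verbatim, and descending along unique continuations (terminating by finite-dimensionality) forces $X$ uniserial. Uniseriality is self-dual, because $D$ carries the summands of $\rad P_v/\soc P_v$ bijectively onto those of $\rad(DP_v)/\soc(DP_v)$ and preserves uniseriality, so every branch of $B^{op}$ is uniserial as well. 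Condition (c) then follows: fixing an arrow $\alpha$, the uniserial branch of $B^{op}$ with top $S_{s(\alpha)}$ has a unique essential continuation $\beta^{op}$, i.e.\ a unique $\beta^{op}$ with $\alpha^{op}\beta^{op}\notin \alpha^{op}\rad(B^{op})\beta^{op}+\soc(B^{op})$; since the anti-isomorphism $x\mapsto x^{op}$ carries $\beta\alpha$ to $\alpha^{op}\beta^{op}$ and preserves $\rad$ and $\soc$, this is exactly the statement that at most one $\beta$ satisfies $\beta\alpha\notin\beta\rad(B)\alpha+\soc(B)$.

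The step I expect to be the main obstacle is the equivalence used in the uniseriality argument between \emph{$pw_i$ factoring through a projective} and \emph{the $i$-th continuation of $X$ being inessential}. Because $B$ is only assumed selfinjective and is \emph{not} a priori a string algebra, this cannot be read off from string combinatorics and must be argued in $\smod B$ directly from selfinjectivity together with the simplicity of the tops and socles supplied by hypothesis (a); controlling the factorizations through $Q$ and its socle is the delicate point, and it is essentially the content that is imported from Proposition 2.7 of \cite{Pog} and Proposition 7.8 of \cite{AIP}. A secondary technical matter is the passage to a presentation in which conditions (a) and (c) hold syntactically: after reducing to a basic algebra one chooses generators of $I$ compatibly with the socle, so that the two uniserial branches glued along $\soc P$ are accounted for precisely by the term $+\soc(B)$ in condition (c); this is where the clause ``Morita equivalent to an algebra'' is used.
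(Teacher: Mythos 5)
A preliminary remark: the paper does not prove this proposition at all --- it is imported from Pogorza\l{}y (Proposition 2.7) and reproved in \cite{AIP} (Proposition 7.8); the text says explicitly ``Here we cite only the part of the result that we need.'' So there is no internal proof to compare your attempt against, and it has to stand on its own. It does not, for two reasons. The first is that the core step of your uniseriality argument is missing and is deferred circularly. If $X$ branches immediately below its top, then indeed both $pw_1$ and $pw_2$ are nonzero as module homomorphisms (since $p$ carries $\rad Q/\soc Q=Y_1\oplus Y_2$ onto $\rad X$ and each $pw_i(Y_i)$ has simple or zero top), but hypothesis (b) is a statement about vanishing in $\smod B$, and a nonzero map may well factor through a projective. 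Concretely, since $Q$ is also the projective cover of $X$, ``$pw_i$ factors through a projective'' is equivalent to ``$pw_i$ lifts along $Q\twoheadrightarrow X$'', and excluding such a lift in the presence of branching is exactly the technical heart of the cited result; it uses the selfinjectivity of $B$ and the simplicity of the tops and socles in an essential way. You flag this as ``the main obstacle'' and then write that it ``is essentially the content that is imported from Proposition 2.7 of \cite{Pog} and Proposition 7.8 of \cite{AIP}'' --- but those propositions \emph{are} the statement you are proving, so invoking them is circular: your attempt contains no argument for its own central claim.

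The second gap is the final ``i.e.''\ equating uniseriality with condition (c) of Definition \ref{stb}. Even granting that every branch of $B$ and of $B^{op}$ is uniserial, what uniseriality of the branch $(\alpha B+\soc(B))/\soc(B)$ gives at the second radical layer is: for all but at most one arrow $\beta$, one has $\alpha\beta\in\alpha\rad^2(B)+\soc(B)$ (after a suitable choice of arrow representatives). Conditions (b)/(c) of Definition \ref{stb} require the stronger containment $\alpha\beta\in\alpha\rad(B)\beta+\soc(B)$, and $\alpha\rad(B)\beta+\soc(B)$ is in general strictly smaller than $\alpha\rad^2(B)+\soc(B)$: writing $\alpha\beta=\alpha r+s$ with $r\in\rad^2(B)e_{e(\beta)}$, the element $r$ decomposes as $r_1\beta+r_2\delta$ where $\delta\neq\beta$ is the other arrow ending at $e(\beta)$, and the term $\alpha r_2\delta$ is not visibly in $\alpha\rad(B)\beta+\soc(B)$. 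So ``unique essential continuation of the uniserial branch'' and the condition of the Definition are different statements, and bridging them requires an additional rewriting argument that you do not supply. (By contrast, your treatment of condition (a) of Definition \ref{stb} --- via $\soc P\subseteq\rad^2 P$ away from the Loewy-length-two case excluded by $X'\neq 0$, plus the self-duality of hypothesis (a) --- is correct, and your reduction of condition (c) for $B$ to condition (b) for $B^{op}$ is also fine; the failures are confined to the two points above, which unfortunately are where all the substance of the proposition lies.)
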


To use the criterion above we need the following lemma:

\begin{lem}[see Proposition 7.1 \cite{Pog}]\label{relations}
Let $A$ be selfinjective special biserial, let $\mathcal{M}$ be a maximal system of
orthogonal stable bricks which is an image of the set of simple
$B$-modules under some stable equivalence, where $B$ is selfinjective. Let $N$ be s-projective
and $M \in \mathcal{M}$ be $\sstop(N)$. For $\ssrad(N) = R_1 \oplus
R_2$, where $R_1, R_2$ are indecomposable, let $\sstop(R_i)=Y \in \mathcal{M}$ and let $Q$ be an
indecomposable s-projective such that $\sstop(Q)=Y$, let $L_1 \oplus
L_2$ be the s-radical of $Q$, where $L_1, L_2$ are indecomposable. There exist $\underline{f}: Q
\rightarrow R_i$ and $\underline{h}: R_i \rightarrow
Y$ with $\underline{hf} \neq 0$ such that for irreducible morphisms $g_1: L_1 \rightarrow Q$,
$g_2:L_2 \rightarrow Q$, we have
$\underline{fg_1}=0$ or $\underline{fg_2}=0$.
\end{lem}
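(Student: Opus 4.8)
The plan is to treat the statement as two separate assertions: the \emph{existence} of $\underline f$ and $\underline h$ with $\underline{hf}\neq 0$, and the \emph{vanishing} of one of the composites $\underline{fg_1},\underline{fg_2}$. The existence is formal and uses only the defining property of $Q$. Since $\sstop(Q)\simeq Y$ we have $\Homs(Q,Y)\neq 0$; by condition (2) in the definition of an s-projective module $\Homs(Q,\oplus_{M_i\in\mathcal M}M_i)\simeq\kk$, so $\Homs(Q,Y)$ is one-dimensional, spanned by the canonical s-top map $\underline p\colon Q\to Y$. Let $\underline h\colon R_i\to Y$ be the canonical non-zero map onto $\sstop(R_i)\simeq Y$. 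Applying the lifting property (condition (3) in the definition of s-projective) to $Q$, with $\underline h$ in the role of the non-zero map into $Y$ and $\underline p$ the map to be lifted, produces $\underline f\colon Q\to R_i$ with $\underline{hf}=\underline p\neq 0$. This settles existence and fixes the pair $(\underline f,\underline h)$ to be analysed; note that $\underline f$ need not be unique, so the vanishing argument will be carried out on the level of the graph-map structure of any such lift.

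For the vanishing I would pass to the combinatorics of string modules. By Lemma \ref{s-proj} (applied to $Y$) the module $Q\simeq\tau^{-1}\Omega(Y)$ has an explicit zig-zag diagram with two extreme ends, and by Lemma \ref{s-rad} its s-radical $L_1\oplus L_2$ is obtained from that diagram by the operations $^l(-)$ and $(-)^r$; thus the irreducible maps $g_1,g_2$ modify $Q$ by a hook or co-hook at the left and at the right end respectively. On the other side, $R_i$ is one indecomposable summand of the s-radical of $N$, so by Lemma \ref{s-top} its s-top $Y$ overlaps the diagram of $R_i$ only along a subdiagram emanating from a single distinguished vertex (such as $e_{j_0\righthalfcup}$), the overlap terminating at a deep/peak mismatch which is exactly what makes $\underline h$ non-zero in $\smod A$. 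Any non-zero $\underline f\colon Q\to R_i$ is a graph map, factoring as a string epimorphism onto a common substring followed by a string monomorphism into $R_i$; the requirement $\underline{hf}=\underline p\neq 0$ forces the image of $\underline f$ to contain the top part of $R_i$ that $\underline h$ sends into $\soc Y$, which pins down the overlap of the diagrams of $Q$ and $R_i$ up to the degenerate situations listed below.

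With the overlap fixed, the heart of the argument is to read off $\underline{fg_1}$ and $\underline{fg_2}$ from the diagrams. Here $Q$ is the s-projective cover of $Y$ and hence carries \emph{both} radical branches at the vertex $Y$, whereas $R_i$, being a summand of $\ssrad(N)$, develops only one of them. The lift $\underline f$ identifies the top of $Q$ with the top of $R_i$ and extends this identification along the overlap dictated by Lemma \ref{s-top}: along one branch the diagram of $Q$ continues to agree with a submodule of $R_i$, so that composite is non-zero; along the \emph{other} branch the continuation present in $Q$ has no counterpart in $R_i$, so the hook added there by the corresponding $g_j$ is pushed beyond the diagram of $R_i$, running into a maximal directed string ending at a deep, i.e.\ into the end of a relation. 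That composite $L_j\to Q\to R_i$ therefore factors through an indecomposable projective and is zero in $\smod A$. Concretely this means running through cases (1)--(3) of Lemmas \ref{s-proj}, \ref{s-rad}, \ref{s-top}, together with the degenerate configurations: a trivial hook, a uniserial projective (where $\underline f$ or one $g_j$ already behaves specially), and the simple modules attached to a vertex with exactly one incoming and one outgoing arrow, which are dealt with via Lemma \ref{one in, one out}.

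I expect the main obstacle to be precisely the passage from \emph{set-theoretic} to \emph{stable} vanishing: showing that the composite along the ``wrong'' branch does not merely have small image but actually factors through a projective module, hence dies in $\smod A$. This is where the relations of the special biserial algebra enter, since the relevant composite meets a socle element that is the bottom of an indecomposable projective, and one must invoke the chosen form of the generating relations (the zero relations and the commutativity relations $\alpha_1\cdots\alpha_m=c\,\beta_1\cdots\beta_n$) to certify the factorization. A secondary difficulty is purely bookkeeping: matching the vertex labels of $Q$, $L_1$, $L_2$, $R_i$ and $Y$ coming from three different lemmas, and checking that the distinguished overlap vertex forced by $\underline{hf}\neq 0$ is consistent across all cases, so that the branch that dies is unambiguously determined.
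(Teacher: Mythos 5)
Your first half is correct, and it is a genuinely different (and slicker) route to the pair $(\underline{f},\underline{h})$ than the paper's: since $\sstop(Q)=Y$ gives $\Homs(Q,Y)\simeq\kk$, spanned by the canonical map $\underline{p}$, and $\underline{h}\colon R_i\to Y$ is non-zero, condition (3) in the definition of an s-projective module yields a lift $\underline{f}$ with $\underline{hf}=\underline{p}\neq 0$. The paper never invokes this lifting property; it builds $f$ explicitly and checks $\underline{hf}\neq 0$ diagram by diagram, so on this point you save real work.

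The second half --- that $\underline{fg_1}=0$ or $\underline{fg_2}=0$ --- is, however, the entire content of the lemma (it is what feeds condition (b) of Proposition \ref{crit}), and there your proposal has a genuine gap which your own setup makes worse. The lift $\underline{f}$ produced by s-projectivity is determined only up to the kernel $K$ of $\underline{h}\circ-\colon\Homs(Q,R_i)\to\Homs(Q,Y)$, and the property ``$\underline{fg_1}=0$ or $\underline{fg_2}=0$'' is not invariant under replacing $f$ by $f+d$ with $d\in K$: if $\underline{dg_1}\neq 0$ the property can be destroyed. (For ``every lift works'' one would need $\underline{dg_1}=0$ for all $d\in K$, or the same for $g_2$ --- a far stronger claim that nothing in your sketch addresses.) Moreover a non-zero stable class need not be represented by a single graph map; homomorphisms between string modules are in general linear combinations of graph maps. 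So the plan of arguing ``on the level of the graph-map structure of any such lift'' cannot be executed; one must exhibit one concrete good $f$, and that is exactly the work you defer as ``the main obstacle.'' This is what the paper does: it runs a case analysis on which of the forms (1)--(3) of Lemma \ref{s-proj} the module $Y=\sstop(R_i)$ has (this determines the diagram of $Q$); for form (1) it takes the map $f$ whose image consists only of the vertex $e_{j_0\righthalfcup}$, and then $fg_1=0$ identically because $L_1$ is obtained from $Q$ by deleting precisely the hook starting with $e_{j_0\righthalfcup}$; for form (2) it takes $f$ induced by $z_{m_0}\rightarrow e_{i'_1}$ and shows that $fg_1$ factors through the projective module with top $e_{j_0\righthalfcup}$. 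Note also that two structural claims underpinning your heuristic are inaccurate: $Q$ does not ``carry both radical branches at the vertex $Y$'' (it is a zig-zag with a peak over each deep of $Y$, and $L_1,L_2$ arise from hook/co-hook operations at the two ends of that zig-zag, Lemma \ref{s-rad}), and the good $f$ does not ``identify the top of $Q$ with the top of $R_i$'' --- in the paper's case (1) its image is a single corner vertex.
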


\begin{proof}
By Lemma \ref{s-top} in all the cases $R_i$ and $\sstop(R_i)$ start from the same
vertex and their intersection ends on a deep of $\sstop(R_i)$ which
is not a deep of $R_i$, or it ends on a peak of $R_i$ which is not a
peak of $\sstop(R_i)$ or $R_i$ and $\sstop(R_i)$ coincide. That guarantees the existence of a morphism
from $R_i$ to $\sstop(R_i)$, which sends this intersection to itself
and this morphism is non-zero in $\smod A$, let us denote this morphism by $h$.

Without loss of generality we can consider the case (1)-$R_1$. In each
case $\sstop(R_1)$ is itself a module of the form (1)-(3) from Lemma
\ref{s-proj}.

If $\sstop(R_1)$ has the form (1), then there is a non-zero morphism $f$
from $Q$ to $R_1$, whose image consists only of $e_{j_0
\righthalfcup}$. There is a summand $L_1$ of
$\ssrad(Q)$ which is formed by deleting the hook starting with
$e_{j_0 \righthalfcup}$, clearly $fg_1=0$ and $\underline{hf} \neq 0$. If
$\sstop(R_1)$ has the form (2), then there is a non-zero morphism $f$ from
$Q$ to $R_1$, induced by $z_{m_0}\rightarrow e_{i'_1}$. There is a summand $L_1$ of $\ssrad(Q)$ which is formed by
adding a co-hook starting from $e_{j'_0 \righthalfcup}$, the
composition  $fg_1$ factors through the projective
module with the top corresponding to $e_{j_0 \righthalfcup}$, clearly $\underline{hf} \neq 0$. The
case when $\sstop(R_1)$ has the form (3) is similar.
\end{proof}

\begin{thm}[see Theorem 7.3 \cite{Pog}]\label{Thm}
Let $A$ be a selfinjective special biserial $\kk$-algebra not isomorphic to the Nakayama algebra with $\rad^2=0$. If $B$ is a
basic algebra stably equivalent to $A$, then $B$ is
stably biserial.
\end{thm}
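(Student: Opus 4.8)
The plan is to verify that $B$ satisfies the hypotheses (a) and (b) of the criterion in Proposition \ref{crit} and then to read stable biseriality off Definition \ref{stb}. First I would record that, as is standard, a basic algebra stably equivalent to the selfinjective algebra $A$ is again selfinjective, so that all of the preceding machinery applies to $B$. Fix a stable equivalence $F \colon \smod B \rightarrow \smod A$ and let $\mathcal{M} = \{M_1, \ldots, M_n\}$ be the image of the set of simple $B$-modules; by the Remark following the definition of a system of orthogonal stable bricks this is a maximal such system. For an indecomposable projective $B$-module $P$, the module $P/\soc P$ is sent to an indecomposable s-projective module $N$, and by Corollary \ref{simple top} the module $\rad P/\soc P$ is sent to $\ssrad(N)$. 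I note at the outset that Corollary \ref{number of arrows} already supplies condition (a) of Definition \ref{stb}: the quiver of $B$ has at most two incoming and at most two outgoing arrows at each vertex.

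To check condition (a) of Proposition \ref{crit}, I would combine Lemma \ref{s-rad} with Corollary \ref{simple top}. Lemma \ref{s-rad} gives $\ssrad(N) = R_1 \oplus R_2$ with $R_1, R_2$ indecomposable (one possibly zero), so pulling back along $F$ yields $\rad P/\soc P = X' \oplus X''$. Corollary \ref{simple top} says that the nonzero summands $R_j$ have simple $\sstop$, and since $F$ is an equivalence this translates into $X'$, $X''$ having simple top. Applying the identical argument to $B^{op}$, which is stably equivalent to the selfinjective special biserial algebra $A^{op}$, and dualizing shows that $X'$, $X''$ also have simple socle. This is exactly condition (a) of the criterion.

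Condition (b) of Proposition \ref{crit} is the substantive point, and it is precisely the content of Lemma \ref{relations} transported through $F$. For $X = X'$ or $X''$ with projective cover $Q_B$ and canonical epimorphism $p \colon Q_B/\soc Q_B \rightarrow X$, the equivalence sends $X$ to a summand $R_i$ of $\ssrad(N)$, sends $Q_B/\soc Q_B$ to the indecomposable s-projective module $Q$ with $\sstop(Q) = \sstop(R_i)$, sends $\rad Q_B/\soc Q_B = Y_1 \oplus Y_2$ to $\ssrad(Q) = L_1 \oplus L_2$, and sends the irreducible maps $w_1, w_2$ to irreducible maps $g_1 \colon L_1 \rightarrow Q$, $g_2 \colon L_2 \rightarrow Q$. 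The map $p$ becomes a nonzero $\underline{f} \colon Q \rightarrow R_i$ whose composite with some $\underline{h} \colon R_i \rightarrow \sstop(R_i)$ is nonzero, and Lemma \ref{relations} produces exactly such an $\underline{f}$ with $\underline{f g_1} = 0$ or $\underline{f g_2} = 0$. Since a morphism of $B$-modules factors through a projective precisely when its class in $\smod B$ vanishes, and $F$ preserves vanishing, this says that $p w_1$ or $p w_2$ factors through a projective, which is condition (b).

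With both hypotheses verified, Proposition \ref{crit} gives that $B$ is Morita equivalent, hence (being basic) isomorphic, to an algebra satisfying conditions (a) and (c) of Definition \ref{stb}. Running the identical argument for $B^{op}$ yields condition (c) for $B^{op}$, which is condition (b) for $B$; together with Corollary \ref{number of arrows} this establishes all of (a), (b), (c), so $B$ is stably biserial. I expect the main obstacle to be the faithful transport of the module-theoretic data of the criterion, namely the number of indecomposable summands, the simplicity of tops and socles, and factorization through projectives, across the stable equivalence, together with matching the canonical epimorphism $p$ to the specific morphism $\underline{f}$ supplied by Lemma \ref{relations}. The genuinely delicate combinatorics, the string computations underpinning Lemmas \ref{s-rad}, \ref{s-top} and \ref{relations}, is where the special biserial structure is actually used; the present theorem mainly assembles those inputs.
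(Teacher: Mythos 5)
Your outline reproduces the paper's strategy---reduce to $B$ selfinjective, verify conditions (a) and (b) of Proposition \ref{crit} via Corollary \ref{simple top} and Lemma \ref{relations}, then dualize through $B^{op}$ to get the remaining condition of Definition \ref{stb}---and your treatment of condition (a) and of the final assembly matches the paper. But there is a genuine gap at exactly the step you flag as ``the main obstacle'' and then leave unresolved: condition (b). Lemma \ref{relations} is purely an existence statement: it produces \emph{some} stable map $\underline{f}\colon Q \rightarrow R_i$ with $\underline{hf}\neq 0$ and $\underline{fg_1}=0$ or $\underline{fg_2}=0$. Condition (b) of Proposition \ref{crit}, by contrast, concerns \emph{the} canonical epimorphism $p\colon Q_B/\soc(Q_B)\rightarrow X$. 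Transporting $p$ through $F$ gives some class in $\Homs(Q,R_i)$, but nothing identifies this class, even up to scalar, with the $\underline{f}$ supplied by the lemma; a priori $\Homs(Q,R_i)$ need not be one-dimensional, and the vanishing $\underline{fg_j}=0$ is a property of a particular class, not of every nonzero one. The paper closes precisely this gap with a dedicated argument: writing $\pi_X\colon Q_B\rightarrow X$ and $\pi\colon Q_B\rightarrow Q_B/\soc(Q_B)$ for the projective covers, since $X$ has simple top one may normalize so that $p''=p-p'$ has image in $\rad(X)$; lifting $p''$ to $\tilde{p}\colon Q_B\rightarrow Q_B$, noting $\Imm(\tilde{p})\subseteq\rad(Q_B)$ so that $\tilde{p}$ factors through $Q_B/\soc(Q_B)$, one concludes that $p''$ factors through a projective, hence $\underline{p}=\underline{p'}$ in $\smod B$. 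Only this uniqueness-up-to-projectives of maps $Q_B/\soc(Q_B)\rightarrow X$ allows the existence statement of Lemma \ref{relations} to be applied to the canonical $p$; without it, your sentence ``Lemma \ref{relations} produces exactly such an $\underline{f}$'' does not establish condition (b).

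A smaller but real issue is your opening reduction. The claim that ``as is standard'' a basic algebra stably equivalent to a selfinjective algebra is again selfinjective is false in that generality: the path algebra of $\bullet\rightarrow\bullet$ is stably equivalent to $\kk[x]/(x^2)$. This failure is exactly why the theorem excludes the Nakayama algebra with $\rad^2=0$, and the paper derives selfinjectivity of $B$ from that exclusion: over such an $A$ every Auslander--Reiten sequence $0\rightarrow M \rightarrow N\oplus P \rightarrow L\rightarrow 0$ with $P$ projective and $N$ non-projective has its left-hand map stably nonzero, so by \cite[Proposition 2.3]{AR6} the quasi-inverse of the equivalence carries Auslander--Reiten sequences to Auslander--Reiten sequences; hence $\tau$ and $\tau^{-1}$ are defined on all non-projective $B$-modules and $B$ is selfinjective. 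Your proposal never invokes the hypothesis that $A$ is not the Nakayama algebra with $\rad^2=0$, which is the telltale sign that this step has been glossed over rather than proved.
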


\begin{proof}
Let $\Phi: \smod B \rightarrow \smod A$ be an
equivalence of categories. Since $A$ is a selfinjective special biserial $\kk$-algebra not isomorphic to the Nakayama algebra with $\rad^2=0$ we can assume that $B$ is selfinjective. Indeed, since over $A$ for any Auslander-Reiten sequence $0\rightarrow M \xrightarrow{f}N \oplus P\rightarrow L\rightarrow 0$, where $P$ is projective and $N$ is not projective, we have $\underline{f}\neq 0$, then by \cite[Proposition 2.3]{AR6} $0\rightarrow \Phi^{-1}(M) \rightarrow \Phi^{-1}(N) \oplus Q\rightarrow \Phi^{-1}(L)\rightarrow 0$ is the Auslander-Reiten sequence for some projective $Q$. Hence, $\tau$ and $\tau^{-1}$ are defined for all not projective modules, so $B$ is selfinjective.

Let $B$ be a selfinjective algebra which is not a local Nakayama
algebra. Then by
Proposition \ref{tau-period} none of the simple $B$-modules
$\{S_i\}_{i=1,...n}$ and none of the modules of the form $P/\soc P$
for an indecomposable projective $B$-module $P$ are of $\tau$-period $1$. Thus
$\{\Phi(S_i)\}_{i=1,...n}=\mathcal{M}$ is a maximal system of
orthogonal stable bricks over $A$. As $\{\Phi(P_i/\soc
P_i)\}_{i=1,...n}$ is the set of s-projective modules with respect
to $\mathcal{M}$, $\Phi$ sends $\rad(P_i/\soc P_i)$ to
$\ssrad(\Phi(P_i/\soc P_i))$. Corollary \ref{simple top} implies that
$\rad(P_i/\soc P_i)$ is a sum of at most two modules with simple
top.

The duality $D_B: \rmod B \rightarrow \rmod B^{op}$ sends simple
$B$-modules to simple $B^{op}$-modules, modules of the form
$\rad(P_i/\soc P_i)$ to modules of the form $\rad(P_i/\soc P_i)$,
top to socle and socle to top. The equivalence $\Phi$ induces an equivalence $\smod
B^{op} \rightarrow \smod A^{op}$. Since $A^{op}$ is also
selfinjective special biserial, $\rad(P_i/\soc P_i)$ is a sum of at
most two modules with simple top. Hence, the $B$-module $\rad(P_i)/\soc
(P_i)$ is a sum of at most two modules with simple socle. Thus, the
condition (a) of Proposition \ref{crit} holds. These conditions
correspond to the fact that there are at most two incoming and
outgoing arrows in the quiver of $B$.

In the notations of Proposition \ref{crit}, by Lemma \ref{relations}
there exists $p: Q/\soc Q \rightarrow X$ such that condition (b)
holds. Let us prove that condition (b) holds for any $p': Q/\soc Q
\rightarrow X$. Let us denote by $\pi_X: Q \rightarrow X$ the
projective cover of $X$ and by $\pi: Q \rightarrow Q/\soc Q$ the
projective cover of $Q/\soc Q$. By assumption $X$ has a simple top,
thus without loss of generality we can assume that the image of
$p''=p-p'$ belongs to $\rad (X)$. The morphism $p''$ can be lifted to a morphism
$\tilde{p}: Q \rightarrow Q$ between the projective covers ($\pi_X
\tilde{p}=p''\pi$). The image of $\tilde{p}$ belongs to $\rad (Q)$;
hence $\tilde{p}$ factors through $Q/\soc(Q)$ and
$\tilde{p}=h\pi$ for some $h$. Thus, $\pi_X h\pi=p''\pi$ and since
$\pi$ is an epimorphism $\pi_X h=p''$. We get that $p''$ factors
through a projective, and hence is zero in the stable category,
$\underline{p}=\underline{p'}$ and condition (b) of Proposition
\ref{crit} holds. This condition correspond to condition (c) in
Definition \ref{stb}.

It is clear  that the
conditions (b) and (c) of Definition \ref{stb} are dual to each
other. By the previous paragraph condition (b) of Proposition
\ref{crit} holds for $B^{op}$, and thus condition (c) in Definition
\ref{stb} holds for $B^{op}$; thus condition (b) in Definition
\ref{stb} holds for $B$ and $B$ is stably biserial.
\end{proof}

\section{Auslander-Reiten conjecture}

In this section we are going to prove the Auslander-Reiten
conjecture for special biserial algebras.

Let $B$ be a stably biserial algebra. It is clear that $B/\soc(B)$
is a string algebra, and hence the classification of indecomposable
non-projective modules over $B$ coincides with the usual
classification using string and band modules. Then by \cite[Proposition
4.5]{AR5} all Auslander-Reiten sequences over $B$ and
$B/\soc(B)$ not ending with a $B$-module of the form $P/\soc(P)$
coincide. Hence, if there is a system of orthogonal stable bricks
$\mathcal{M}$ over $B$, then all the modules in $\mathcal{M}$ are
string modules.

\begin{lem}[compare to Lemma 4.1 \cite{Pog}]
Let $A=\kk Q/I$ be a stably biserial algebra and let $\mathcal{M}=\{M_1, \dots, M_k\}$ be a
system of orthogonal stable bricks. Then every simple $A$-module can
appear in the multiset of endpoints of diagrams corresponding to $M_i \in \mathcal{M}$ at most twice.
\end{lem}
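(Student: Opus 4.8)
The plan is to argue by contradiction: assume some vertex $x \in Q_0$ occurs at least three times in the multiset of endpoints of the diagrams of the $M_i$, and produce either a nonzero stable morphism between two distinct members of $\mathcal{M}$ (contradicting orthogonality) or a non-scalar stable endomorphism of some $M_i$ (contradicting $\Ends(M_i) \simeq \kk$). Since all the $M_i$ are string modules, I would first classify each endpoint at $x$ as a peak or a deep in the sense fixed above: a peak at $x$ makes $S_x$ a quotient of the corresponding $M_i$ (the vertex $x$ lies in $\topp M_i$, at the top of a uniserial directed submodule), whereas a deep at $x$ makes $S_x$ a submodule of $M_i$ (it lies in $\soc M_i$, at the bottom of a uniserial directed quotient). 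By condition (a) of Definition \ref{stb} each endpoint at $x$ begins with one of the at most two arrows leaving $x$ (if a peak) or ends with one of the at most two arrows entering $x$ (if a deep).

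The two basic sources of morphisms I would exploit are: (i) a peak at $x$ in $M_i$ together with a deep at $x$ in $M_j$ yields the canonical composite $M_i \twoheadrightarrow S_x \hookrightarrow M_j$; and (ii) two endpoints at $x$ that are both peaks (resp. both deeps) along the \emph{same} arrow but whose maximal directed arms have different lengths yield a graph map between the two modules, obtained by factoring through the shorter arm, which is at once a quotient of the longer-armed module and a submodule of the shorter-armed one. In each case the resulting map is nonzero in $\rmod A$, and I would determine exactly when it survives in $\smod A$ by the same cone-and-factorization computation used in Lemma \ref{Ext} and tabulated in Remark \ref{nonzero}: the map is stably zero precisely when the relevant arm is \emph{saturated}, that is, when it runs all the way to the end of the corresponding uniserial arm of the projective–injective module with socle (resp.\ top) $S_x$. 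This is the point at which the stably biserial socle relations of Definition \ref{stb}(b),(c) (equivalently the presentation from \cite{AIP}) are used, since they guarantee that each such projective has at most two uniserial arms and that a saturated arm is essentially determined by the arrow it uses.

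With these tools in place the conclusion becomes a pigeonhole argument. Assuming three endpoints at $x$, I would split into cases according to how many are peaks and how many are deeps, and within each case according to which of the at most two incoming/outgoing arrows is used. Because at most two arms are available in each of the two relevant projectives, three endpoints force one of two bad configurations: a peak and a deep whose arms are both unsaturated, or two like-type endpoints along one arrow with distinct arm lengths, neither saturated. Either configuration produces, via (i) or (ii), a stably nonzero map between two members of $\mathcal{M}$; these two members must be distinct, as a non-scalar stable self-map is already excluded by the brick hypothesis, and so orthogonality is violated, giving the desired contradiction.

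I expect the main obstacle to be the bookkeeping in the saturation analysis rather than any single conceptual step: one must track, for every endpoint, whether its arm reaches the end of the ambient projective arm, and handle the degenerate cases separately — a uniserial projective, an endpoint that is itself a simple module at a vertex with one arrow in and one arrow out (as in Lemma \ref{one in, one out}), or an arm that branches at an interior vertex carrying two outgoing arrows — checking in each subcase that the morphism written down is genuinely not stably zero. This is precisely the case analysis that made Lemma \ref{Ext} and Remark \ref{nonzero} lengthy, and it is reused here in essentially the same form.
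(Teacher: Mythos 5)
Your overall frame -- fix a vertex $x$, classify each endpoint by the incident arrow and by whether it is a peak or a deep, and play graph maps and composites $M_i \twoheadrightarrow S_x \hookrightarrow M_j$ against orthogonality -- is the same as the paper's. The gap is in your central claim that such a composite is stably zero \emph{precisely when the relevant arm is saturated} (runs to the end of an arm of the projective--injective attached to $S_x$), and in the pigeonhole dichotomy you build on it. Stable vanishing of the peak-to-deep composite is not a local condition on one arm: it is equivalent to lifting the map through the projective cover of $M_j$, and over a stably biserial algebra (where nonzero socle products $\beta\gamma \in \soc(A)$ create correction terms in the lift) this lifting condition propagates along the \emph{entire} string of $M_j$. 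The paper's proof makes exactly this analysis: by induction on the number of maximal directed substrings, the stable vanishing of $f$ forces the string of $M_j$ to have the periodic form $\beta_i^{-1}p^{-1}\beta_{3-i}\beta_i^{-1}p^{-1}\beta_{3-i}\cdots\beta_i^{-1}p^{-1}$, hence $\tau M_j \simeq M_j$. Such strings (powers of bands) genuinely exist, and for them no arm is ``saturated'' in your sense, yet the map is stably zero; so your claimed dichotomy is false, and your pigeonhole produces no contradiction in exactly this case.

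What is missing is the use of condition (1) in the definition of a system of orthogonal stable bricks: members of $\mathcal{M}$ are not of $\tau$-period $1$. In the crucial configuration (a peak along $\alpha_i$ in $M$ and a deep along $\beta_i$ in $N$, both in $\mathcal{M}$), the paper does \emph{not} derive a stably nonzero map violating orthogonality -- orthogonality is assumed, the map \emph{is} stably zero, and the contradiction is that this forces $\tau N = N$, which condition (1) forbids. Your proposal never invokes this condition, and without it the argument cannot close: orthogonality and the brick hypothesis alone are consistent with three endpoints at $x$ when the modules involved are band-like periodic strings. (The paper also needs a second type of computation in its Case 2, where for even-length alternating strings the non-factorization through the projective cover is checked by an explicit cascade of coefficients, again not by a saturation criterion.) So the proof as proposed would fail at the saturation step; repairing it requires replacing that criterion by the projective-cover induction and bringing in the $\tau$-period-$1$ exclusion.
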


\begin{proof} Let us fix some $v\in Q_0$. We will consider the simple module corresponding to $v$ and diagrams of $M_i \in \mathcal{M}$ ending at $v$, that is $M_i=c_1\cdots c_l$, $s(M_i)=v$ or $e(M_i)=v$. Suppose that some arrow $\alpha$ incident to $v$ occurs twice at the endpoint $v$ of some diagrams $M_{i_1}=c_1\cdots c_l, M_{i_2}=d_1\cdots d_t$ for some $1 \leq i_1,i_2 \leq k$ in the same manner. Taking the opposite strings $M_{i_j}^{-1}$ if necessary, we can assume that either $s(M_{i_j})=v$, $\alpha=c_1=d_1$ or  $s(M_{i_j})=v$, $\alpha=c_1^{-1}=d_1^{-1}$.  In both cases, there is a non-zero morphism $f:M_{i_1}\to M_{i_2}$ or $f:M_{i_2}\to M_{i_1}$, corresponding to the common part of the diagrams $M_{i_1},  M_{i_2}$. The morphism $f$ is non-zero in $\smod B$, this is a contradiction to the definition of a system of orthogonal bricks.

Now we are to show that at most two different arrows, incident to $v$ can occur at the endpoint $v$ of the diagrams of $M_i \in \mathcal{M}$. If there is only one incoming or outgoing arrow at $v$ (and, consequently, only one outgoing or incoming arrow at $v$, see Lemma \ref{one in, one out}), there is nothing to prove. So suppose that there are $\alpha_1,\alpha_2, \beta_1,\beta_2$ with $s(\alpha_1)=s(\alpha_2)=e(\beta_1)=e(\beta_2)=v$ and consider two cases (if there are loops at the vertex $v$, some arrows may coincide): $\{\beta_i\alpha_j\}_{i,j=1,2}\not\subseteq \soc(A)$ and $\{\beta_i\alpha_j\}_{i,j=1,2} \subseteq \soc(A)$.

$$
\xymatrix @R=.5pc { \bullet \ar[rd]^{\beta_1}&&\bullet\\
& \bullet^v \ar[ru]^{\alpha_1} \ar[rd]^{\alpha_2}\\
\bullet \ar[ru]^{\beta_2}&&\bullet}
$$

{\it Case 1}.   Without  loss of generality we can assume $\beta_1\alpha_2\notin \soc(A)$. In this case, by stably biserial condition,
we have $\beta_1\alpha_1\in \soc(A),\beta_2\alpha_2\in \soc(A)$. Also in this case we have $\beta_2\alpha_1\neq 0$. Indeed, if $\beta_2\alpha_1= 0$, then $0\neq\beta_2 \alpha_2\in \soc(A)$ or $\beta_2\in \soc(A)$, which is impossible, hence if we consider a maximal path $q$ with $q\beta_1\alpha_2\neq 0$ ($q$ is of positive length, since $\beta_1\alpha_2\notin \soc(A)$), we have $\beta_2 \alpha_2=lq\beta_1 \alpha_2$ for some $l\in \kk^*$. As $q\beta_1\alpha_1\in q\cdot \soc(A)=0$, we have $\beta_2-lq\beta_1\in \soc(A)$, a contradiction, and thus $\beta_2\alpha_1\neq 0$.

Let us  prove that at least one of $\beta_1^{-1},\alpha_1$ does not occur at the endpoint of some $M_i \in \mathcal{M}$, and at least one of $\beta_2^{-1}, \alpha_2$ does not occur at the endpoint of some $M_i \in \mathcal{M}$ -- that is all we need. Take $j\in\{1,2\}$ and assume that both $\beta_j^{-1},\alpha_j$ occur at the endpoint of some $M, N \in \mathcal{M}$.

Let $M$ be a module with the diagram starting from $\alpha_i$ ($c_1=\alpha_i$), $x\in M$ -- an element corresponding to $v$, that is $xe_v=x,x\alpha_i\neq 0, x\alpha_{3-i}=0$, note that $x$ is non-zero in the top of $M$. Let $N$ be a module with diagram starting with $\beta_{i}^{-1}$ ($d_1=\beta_{i}^{-1}$), $y\in N$ is an  element corresponding to $v$. Note that $y$ belongs to the socle of $N$. Let $f:M\to N$ be the morphism with $f(x)=y$, which is zero in $\smod(A)$ by the definition of a system of orthogonal stable bricks. We claim that in this case $\tau N=N$ -- this also contradicts the definition of orthogonal stable bricks.
\begin{center}
\begin{tikzpicture}[xscale=0.5, yscale=0.7]

\node (v1) at (-5,4) {x};
\node (v2) at (-4,3) {};
\node (v3) at (-3,2) {};
\node (v4) at (-2,1) {};
\node (v5) at (-1,2) {};
\node (v6) at (0,2) {};
\node at (-7,2) {$M=$};
\node at (3,2) {$N=$};
\node (v8) at (5,1) {y};
\node (v7) at (6,2) {};
\node (v9) at (8,4) {r};
\node (v11) at (9,3) {};
\node (v12) at (10,3) {};
\draw [->] (v1) edge  (v2);
\draw [dashed] (v2) edge (v3);
\draw [->] (v3) edge (v4);
\draw [->] (v5) edge (v4);
\draw [dashed] (v5) edge (v6);
\draw [->] (v7) edge (v8);
\draw [->] (v9) edge (v11);
\draw [dashed] (v11) edge (v12);
\draw [->] (v9) edge (v7);
\node at (-4.1,3.7) {$\alpha_i$};
\node at (5.2,1.8) {$\beta_i$};
\node at (6.6,3.2) {p};
\end{tikzpicture}
\end{center}
We prove the latter claim by induction on the number of maximal directed substrings of $N$. Let $p\beta_i$, where $p$ is a path, correspond to the first maximal directed substring of $N$. Clearly $p\beta_i\notin \soc(A)$, as $N$ does not contain projective summands, and therefore $p\beta_i\alpha_s\neq 0$ for some $s$. We can assume that $s\neq i$. Indeed, if $s=i$, then $\beta_i\alpha_i\in \soc(A)$ implies $p=e_{s(\beta_i)}$ and in this case $p\beta_i\alpha_{3-i}\neq 0$ as well.

Let $t=s(p)$. The projective cover of $N$ is of the form $(g_1,g_2):P=P_t\oplus P'\to N$ where $g_1(e_t)=r$ is the  element of the basis corresponding to the first peak of $N$ (so we have $rp\beta_i=y$) and $y\notin \Imm(g_2)$. If $f=0\in \smod A$,
we have $f=gh=g_1h_1+g_2h_2$ for some $h=\big{(}\begin{smallmatrix}
         h_1 \\
         h_2
        \end{smallmatrix}\big{)} :M\to P_t\oplus P' $. As $g_1(p\beta_i)=y$ we can set $h(x)=(p\beta_i+z_1,z_2)$,
where $(z_1,z_2)\in \Ker(g)$. By construction of the projective cover, $z_1$ is a linear combination of paths not equal to $p\beta_i$ or subpaths of $p\beta_i$. Now $(0,0)=h(x\alpha_{3-i})=(p\beta_i\alpha_{3-i}+z_1\alpha_{3-i},z_2\alpha_{3-i})$, and therefore $0\neq p\beta_i\alpha_{3-i}=kp_1\alpha_{3-i}$ for some path $p_1\neq p\beta_i$ ($k \in \kk^*$). The case $p_1=p_1'\beta_i$ is impossible (in this case either both paths $p\beta_i\alpha_{3-i},p_1\alpha_{3-i}$ have lengths at least $3$ and contain subpaths of the form $\delta\gamma, \eta\gamma$ -- a contradiction, or $\beta_i\alpha_{3-i}$ is equal to a longer path ending with $\beta_i\alpha_{3-i}$, which is also impossible), therefore, as $\beta_{3-i}\alpha_{3-i}\in \soc(A)$, we have $p_1=\beta_{3-i}$. Note, that we get $p\beta_i\alpha_{3-i} \in \soc(A)$. Note that $p\neq \beta_{3-i} p_2$ for any path $p_2$ (else $p_1=\beta_{3-i}$ is a subpath of $p\beta_i$).

Now we can prove the base of our induction. The previous paragraph shows that $s(p)=s(\beta_{3-i})$. If $N$ is a directed string, corresponding to a maximal path $p\beta_i$ then $\tau^{-1}(N)$ is formed by adding a hook and deleting a co-hook, as $e(\beta_i)=e(\beta_{3-i})$, this hook is a maximal directed string, corresponding to $p\beta_i$. We see that $\tau^{-1}(N)=N$, as desired.

Note that we can compute $\tau^{-1}(N)$ in the usual way, since $N$ is not isomorphic to $\rad P$ for some projective module $P$.

Now suppose that the diagram of $N$ contains more than one maximal directed substrings. As $0=f(x\alpha_i)=g(p\beta_i\alpha_i+z_1\alpha_i,z_2\alpha_i)=g(z_1\alpha_i,z_2\alpha_i)$ we have $g_1(z_1\alpha_i)=0$ (since
$\Imm(g_1)\alpha_i\cap \Imm(g_2)\alpha_i=0$, as $\Imm(g_1)\cap \Imm(g_2)\in \soc (N)$), and, as $p\beta_i\alpha_{3-i}\in \soc(P_t)$, we have $g_1(p\beta_i\alpha_{3-i})=0$.  This implies that $g_1(\beta_{3-i}\alpha_i)=0$, $g_1(\beta_{3-i}\alpha_{3-i})=0$, since $\beta_{3-i}\alpha_{3-i} \in \soc (A)$, and hence the second maximal directed substring of the diagram of $N$ is an arrow $\beta_{3-i}$ ($g_1(\beta_{3-i})=g_1(z_1)\in \soc(N)$). Consider a module
$N'\leq N$, corresponding to the subdiagram, containing all but first two directed substrings of $N$ (deleting a hook of $N$). Then we have $\Imm(g_2)\subseteq N'$ and
$g_2h(x)=g_2(z_2)=-g_1(z_1)=lr\beta_{3-i}$ for some $l\in \kk^*$ (since  $0\neq g_1(\beta_{3-i})=g_1(z_1)$). This means that the module $N'$ and the morphism $f'=g_2h$ is of the same form as
$N$ and $f$ (in particular, $N'$ begins with $\beta_i^{-1}$ as well). By induction, the string corresponding to $N$ is of the form $\beta_i^{-1}p^{-1}\beta_{3-i}\beta_i^{-1}p^{-1}\beta_{3-i}\cdots\beta_i^{-1}p^{-1}$, and hence $N$ has $\tau$-period $1$.

{\it Case 2}.  $\{\beta_i\alpha_j\} \subseteq \soc(A)$. For each $i$, $\beta_i\notin \soc(A)$, so suppose that $\beta_i\alpha_{3-i}\neq 0$ (note that we can choose  different $j_1,j_2$ for $\beta_1,\beta_2$ with $\beta_1\alpha_{j_1}\neq 0$, $\beta_2\alpha_{j_2}\neq 0$, since in the other case we have $\beta_1\alpha_j=\beta_2\alpha_j=0$ for some $j$ and $\alpha_j\in \soc(A)$, which is impossible). Let us prove, as above (and with above notation)  that $\alpha_{j}$ and $\beta_i^{-1}$ cannot occur as first arrows for some $M$, $N$ by checking that the corresponding morphism $f$ is non-zero in $\smod A$.  As above, $f(x\alpha_{3-i})=0$ implies that there is a
path $p\neq\beta_i$ and $l\in \kk^*$ such that $\beta_i\alpha_{3-i}=lp\alpha_{3-i}$. As $\beta_i\alpha_{3-i},\beta_{3-i}\alpha_{3-i}\in \soc(A)$ we obtain that $p=\beta_{3-i}$ (otherwise a socle path would be a subpath of a longer path). This implies that $s(\beta_1)=s(\beta_2)$.

Now we have that all  directed strings containing $\beta_i$ has length $1$ and are maximal directed strings, and therefore $N$ is of the form
$\beta_i^{-1}\beta_{3-i}\beta_i^{-1}\beta_{3-i}\dots$. If the length of this word is odd, then $\tau(N)=N$ (deleting a co-hook and adding a hook does not change $N$), contradiction.
In the case of even length (i.e. if $dim(N)=2n+1$ is odd) let $y_1,\dots,y_n\in N$ be the elements of the diagram of $N$ corresponding to peaks. Then projective cover of $N$ is of the form $g:(e_{s(\beta_i)}A)^n\to N$, $g(z_k)=y_k$ for $k=1,\dots,n$, where $z_k$ is the generator of the corresponding copy of $e_{s(\beta_i)}A$ and $\Ker(g)=\langle \{z_k\beta_{3-i}-z_{k+1}\beta_i \} \rangle$. Now suppose that $f=gh$ for some $h$. Then
$h(x)=z_1\beta_i+\sum_{k=1}^{n-1}l_k(z_k\beta_{3-i}-z_{k+1}\beta_i)$. Multiplying this by $\alpha_{3-i}$, we obtain
$$
0=h(x\alpha_{3-i})=\sum_{k=1}^{n-1}z_k(l_k\beta_{3-i}\alpha_{3-i}-l_{k-1}\beta_{i}\alpha_{3-i})-l_{n-1}z_n\beta_{i}\alpha_{3-i},
$$
where $l_0=-1$. As all coefficients in the sum are to be zero, we obtain consequently that $l_i\neq 0$ for all $i=0,\dots n-1$,
therefore the last summand is non-zero, contradiction.
\end{proof}

Recall that a simple non-projective, non-injective module $S$ is called a node if the Auslander-Reiten sequence starting at $S$ has the form $$0\rightarrow S\rightarrow P \rightarrow \tau^{-1}S\rightarrow 0,$$ where $P$ is projective. By the results of \cite{MV2}, any algebra with nodes is stably equivalent to an algebra without nodes. Let $A$ be an algebra with nodes $S_1,\cdots,S_k$, $S=\oplus_{i=1}^kS_i$. Let $a$ be the trace of $S$ in $A$, i.e. $\Sigma_{h\in \Hom(S,A)} \Imm(h)$. Note that $a$ is a two-sided ideal of $A$. Let $b$ be a right annihilator of $a$, note that $A/b$ is semisimple and $a$ is an $A/a\text{-}A/b$ bimodule. Then the matrix algebra $\hat{T}_A=\begin{pmatrix}
A/a&a \\
0& A/b
\end{pmatrix}$ has no nodes and it is stably equivalent to $A$. The construction of $\hat{T}_A$ replaces every node in the quiver of $A$ by two simple modules: a sink and a source. It is clear, that the number of non-projective simple modules is preserved under this stable equivalence. 

\begin{thm}[compare to Theorem 0.1 \cite{Pog}]
Let $A,B$ be two finite dimensional algebras such that $\smod A\cong \smod B$ and $A$ is special biserial. Then the number of isomorphism classes of non-projective simple modules over $A$ and $B$ coincides.
\end{thm}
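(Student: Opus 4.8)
The plan is to reduce everything to the selfinjective case and then read off the equality of the two simple counts from a two-sided application of the preceding Lemma, with Theorem \ref{Thm} used to force the partner algebra into the stably biserial world.

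First I would invoke Mart\'inez-Villa's reduction of the Auslander-Reiten conjecture to selfinjective algebras, together with the node-separation construction $\hat T$ recalled above. The latter is a genuine stable equivalence that preserves the number of non-projective simple modules and that, applied to a special biserial algebra, again yields a special biserial algebra (node separation respects conditions (a)--(c) of Definition \ref{ssb}, since at a node the relevant products of arrows already vanish); the former lets me restrict attention to selfinjective algebras. Tracking the special biserial structure through the reduction, I may therefore assume that $A$ is selfinjective special biserial. The algebras excluded from Theorem \ref{Thm} -- the Nakayama algebra with $\rad^2=0$, and more generally the local Nakayama algebras excluded throughout this section -- I would treat directly, since for them the stable category is completely explicit and the number of simple modules can be read off at once. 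Assume henceforth that $A$ is selfinjective special biserial and not of this exceptional type.

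Next, fix a stable equivalence $\Phi\colon\smod B\to\smod A$. As at the start of the proof of Theorem \ref{Thm}, the fact that every Auslander-Reiten sequence over $A$ stays nonzero in $\smod A$ forces $B$ to be selfinjective, and Theorem \ref{Thm} then gives that $B$ is stably biserial; since a selfinjective special biserial algebra is itself stably biserial, the preceding Lemma is available both in $\smod A$ and in $\smod B$. Write $n_A,n_B$ for the numbers of simple modules of $A,B$, which in the connected non-semisimple selfinjective setting coincide with the numbers of non-projective simples. As observed in the remark above, an equivalence of stable categories of selfinjective algebras carries the simple modules to a maximal system of orthogonal stable bricks; thus the images under $\Phi$ of the simple $B$-modules form such a system in $\smod A$, of cardinality $n_B$.

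The counting is then immediate. Each of the $n_B$ bricks contributes two string-ends (a simple brick contributing its single vertex as a coincident pair of ends), and the preceding Lemma asserts that every vertex of $A$ occurs at most twice among these endpoints; hence $2n_B\le 2n_A$, i.e.\ $n_B\le n_A$. Running the same argument for $\Phi^{-1}$ applied to the simple $A$-modules -- a maximal system of orthogonal stable bricks in $\smod B$ of cardinality $n_A$, to which the Lemma applies because $B$ is stably biserial -- gives $n_A\le n_B$. Hence $n_A=n_B$, and carrying this equality back through the reduction shows that the original $A$ and $B$ have the same number of non-projective simple modules.

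I expect the reduction, rather than the counting, to be the main obstacle: one must verify that the passage to the selfinjective case keeps the algebra special biserial and leaves the non-projective simple count unchanged on both sides, and one must handle the excluded (local) Nakayama algebras separately. The only delicate point inside the counting is the convention that a simple brick occupies both endpoint-slots at its vertex; this is consistent with orthogonality, since any further brick with an end at that vertex would admit a nonzero stable morphism to or from the simple, contradicting the definition of a system of orthogonal stable bricks.
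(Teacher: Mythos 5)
Your proposal is correct and follows essentially the same route as the paper: Mart\'inez-Villa's reduction (node separation $\hat T$ plus the passage to a selfinjective special biserial algebra) to reduce to the selfinjective case, separate treatment of the Nakayama algebras with $\rad^2=0$, Theorem \ref{Thm} to make $B$ stably biserial, and the endpoint-counting lemma applied in both directions to obtain the two inequalities. The only cosmetic difference is your convention of counting a simple brick as a coincident pair of ends, where the paper instead states that the vertex of a simple brick cannot occur as an endpoint of any other diagram in the system --- the same fact, used in the same way.
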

\begin{proof} Without loss of generality we can assume that $A, B$ have no semisimple summands. First, let us prove the statement for $A, B$ - selfinjective. If one of the algebras (and hence the other as well) has isolated vertices in the Auslander-Reiten quiver of the stable category, then they correspond to $P/\soc P$ or to $\rad P$ for some projective module $P$ of length $2$. Hence $A$ and $B$ have as summands Nakayama algebras with $\rad^2=0$, the number of simple modules over these algebras is the number of isolated vertices in the Auslander-Reiten quiver of the stable category, hence it is the same for $A, B$. From now on we can assume, that $A, B$ do not have a Nakayama algebra with $\rad^2=0$ as a summand. By Theorem \ref{Thm}, $B$ is stably biserial. Let $\mathcal{M}=\{M_1,\dots, M_k\}$ be the images of simple $A$-modules
under equivalence $F:\smod A\to \smod B$. Then $\mathcal{M}$ is a maximal system of orthogonal stable bricks. If some $M_i$ is a simple module, then it can not occur as an endpoint of any other diagram in $\mathcal{M}$.
The diagram of each non-simple $M_i$ has two endpoints, labelled by simple $B$-modules $S_i^1$ and $S_i^2$.  Suppose that the number of simple $B$-modules is less than $k$, then $S_{i_1}^{j_1}=S_{i_2}^{j_2}=S_{i_3}^{j_3}$ for some $i_l,j_l$. This contradicts the previous lemma. The same argument for the quasi-inverse $\tilde{F}:\smod B\to \smod A$ shows that the number of simple $B$-modules is less or equal to the number of simple $A$-modules and we are done.

Let us now consider arbitrary $A, B$, where $A$ is special biserial. If $A$ or $B$ has nodes, we can replace it by the matrix algebra $\hat{T}_A$ or $\hat{T}_B$, respectively. If $A$ is special biserial, then so is $\hat{T}_A$, so we can assume that $A, B$ have no nodes. To algebras $A, B$ one can associate selfinjective algebras $\Delta_{A},$ $\Delta_{B}$ in the following way: let $P_{A}$ be the set of isoclasses of projective-injective $A$-modules that remain projective-injective under the action of any power of the Nakayama functor $\nu^k$. Define $\Delta_{A}:=End(\oplus_{P\in P_{A}}P)$. If $A$ is special biserial, then $\Delta_{A}$ is selfinjective special biserial. By \cite{MV} (since $A, B$ have no nodes) the algebras $\Delta_{A},$ $\Delta_{B}$ are stably equivalent, and hence by the previous paragraph they have the same number of simple modules. By \cite{MV} $A, B$ have the same number of isomorphism classes of non-projective simple modules.
\end{proof}

\section{Symmetric stably biserial algebras}

Recall the standard description of a symmetric special biserial
algebra \cite{Sch}. We will assume that all quivers are  connected. Consider the following data:

\begin{enumerate}
\item A quiver $Q$ such that every vertex has two incoming and two outgoing arrows
or one incoming and one outgoing arrow.
\item A permutation $\pi$ on $Q_1$ with $e(\alpha)=s(\pi(\alpha))$ for all $\alpha\in Q_1$
\item A function $m:C(\pi)\to \mathbb{N}$, where $C(\pi)$ is the set of cycles of $\pi$.
\end{enumerate}

\noindent Now consider the ideal $I\subseteq \kk Q$ generated by the
following elements:
\begin{enumerate}

\item  $\alpha\beta$ for all $\alpha,\beta \in Q_1$, $\beta\neq\pi(\alpha)$
\item $\bigg(\alpha\pi(\alpha)\pi^2(\alpha)\dots\pi^{|
    \langle \pi \rangle \alpha|-1}(\alpha)\bigg)^{m(\langle \pi \rangle \alpha)}-\bigg(\beta\pi(\beta)\pi^2(\beta)\dots\pi^{{|\langle \pi \rangle \beta|-1}}(\beta)\bigg)^{m(\langle \pi \rangle \beta)}$ for all $\alpha,\beta\in Q_1$ with $s(\alpha)=s(\beta)$
\item $\bigg(\alpha\pi(\alpha)\pi^2(\alpha)\dots\pi^{|
    \langle \pi \rangle \alpha|-1}(\alpha)\bigg)^{m(\langle \pi \rangle
    \alpha)}\alpha$ and $\pi^{-1}(\alpha)\bigg(\alpha\pi(\alpha)\pi^2(\alpha)\dots\pi^{|
    \langle \pi \rangle \alpha|-1}(\alpha)\bigg)^{m(\langle \pi \rangle
    \alpha)}$ for all $\alpha \in Q_1$ such that $s(\alpha)$ has only one
    incoming and one outgoing arrow.

\end{enumerate}

Then $\kk Q/I$ is a symmetric special biserial algebra (SSB-algebra),
and each SSB-algebra can be described uniquely in this way, up to
obvious isomorphisms. Note that one of the relations from (3) is redundant.

The main aim of this section is to show that any symmetric stably
biserial algebra is in a sense a deformation of some
SSB-algebra. To obtain this, we are going to define the
permutation $\pi$ and the multiplicities of $\pi$-cycles for the
algebras from this class.

From now on let $A=\kk Q/I$ be an arbitrary stably biserial algebra,
with $I$ admissible. Let $\scl(A)=\soc(A)\setminus\{0\}$.

Case I. For $\alpha \in Q_1$ we put $\pi(\alpha)=\beta$ if
$\alpha\beta\notin \soc(A)$, $\beta\in Q_1$. The definition of a stably biserial algebra implies that we have at
most one such arrow.

If $\alpha\rad(A)\subseteq \soc (A)$ we are to define $\pi(\alpha)$
a bit more carefully.

Note that $\alpha\rad(A)= 0$ only for the case
$A=\kk[\alpha]/\alpha^2$ of the algebra with one vertex and one loop
$\alpha$, for that case $\pi(\alpha)=\alpha$, we are not going to
consider this case from here on. We can assume $\alpha\rad(A)\neq 0$
for any $\alpha\in Q_1$. Then (if $\alpha\rad(A)\subseteq \soc (A)$)
we have the following cases:

Case II. There  exist $\beta_1,\beta_2\in Q_1$ ($\beta_1\neq
\beta_2$) with $\alpha\beta_i\in \scl (A)$ ($i=1,2$). 

If $|Q_0|=1$ and $Q_1$ consists of two loops $\alpha,\beta$, then
$\alpha^2, \alpha\beta \in sc(A)$ implies $\beta\alpha \in sc(A)$.
If $\beta^2=0$ set $\pi(\alpha)=\beta, \pi(\beta)=\alpha$, if
$\beta^2 \in sc(A)$ we can chose $\pi(\alpha)=\alpha,
\pi(\beta)=\beta$. If $\beta^2 \notin \soc(A)$, set
$\pi(\alpha)=\alpha, \pi(\beta)=\beta$. From now on $|Q_0|>1$.

The arrow $\alpha$
isn't a loop -- otherwise $\beta_1,\beta_2$ are loops in the same
vertex and we have $|Q_0|=1$. Due to the symmetry, we have
$e(\beta_i)=s(\alpha), i=1,2$.

If $|Q_0|>2$ there exists a unique $\gamma\in Q_1$ with
$s(\gamma)=s(\alpha),e(\gamma)\neq e(\alpha)$ and there exists a
unique $\delta\in Q_1$ with $e(\delta)=e(\alpha),s(\delta)\neq
s(\alpha)$. Then we  have $\delta\beta_i\notin \soc(A)$ and
$\beta_i\gamma\notin \soc(A)$ for some $i$ and $\delta\beta_{3-i}=0$
and $\beta_{3-i}\gamma=0$ (as $\delta\beta_{3-i}$ and
$\beta_{3-i}\gamma$ belong to $\soc (A)$ by stably biserial condition and
are not cycles). Then $\pi(\delta)=\beta_i,
\pi(\beta_i)=\gamma$ as defined in Case I, and we can put
$\pi(\alpha)=\beta_{3-i},\pi(\beta_{3-i})=\alpha$.

Now consider the case $|Q_0|=2$. Due to the symmetry
$\beta_1\alpha,\beta_2\alpha\neq 0$ and clearly
$\beta_1\alpha,\beta_2\alpha\in sc(A)$,
$\beta_1\alpha=c\beta_2\alpha$,
$c\in \kk^*$. By symmetry
$\alpha\beta_1=c\alpha\beta_2$ as well. As $\beta_1-c\beta_2\notin
sc(A)$ (as a combination of non-closed paths), there exists
$\alpha_2\in Q_1$ with $\beta_1\alpha_2-c\beta_2\alpha_2\neq 0$. Then by stably biserial condition $\beta_i\alpha_2
\in \soc(A)$ for some $i$, and hence $\alpha_2\beta_i \in \soc(A)$ for the same $i$.
If $\beta_i\alpha_2 =0$, then $\alpha_2\beta_i =0$ and we can set
$\pi(\beta_i)=\alpha$, $\pi(\alpha)=\beta_i$,
$\pi(\beta_{3-i})=\alpha_2$, $\pi(\alpha_2)=\beta_{3-i}$ and
$\beta_{3-i}\alpha_2 \neq 0, \alpha_2\beta_{3-i} \neq 0$. If
$\beta_i\alpha_2 \neq 0$ but $\beta_{3-i}\alpha_2 \notin
\soc(A) $, then $\alpha_2\beta_{3-i} \notin \soc(A) $ and we
can set $\pi(\beta_i)=\alpha$, $\pi(\alpha)=\beta_i$,
$\pi(\beta_{3-i})=\alpha_2$, $\pi(\alpha)=\beta_{3-i}$ and
$\beta_{3-i}\alpha_2 \neq 0, \alpha_2\beta_{3-i} \neq 0$. If $\beta_{3-i}\alpha_2 \in sc(A)$,
$\beta_{i}\alpha_2\in sc(A)$, then we can chose $\pi$ arbitrary, e.g.
$\pi(\beta_i)=\alpha$, $\pi(\alpha)=\beta_i$,
$\pi(\beta_{3-i})=\alpha_2$, $\pi(\alpha_2)=\beta_{3-i}$. The
remaining case is when $\beta_{3-i}\alpha_2=0$, then $\alpha_2\beta_{3-i}=0$ and we set $\pi(\beta_{3-i})=\alpha$, $\pi(\alpha)=\beta_{3-i}$, $\pi(\beta_i)=\alpha_2$, $\pi(\alpha_2)=\beta_i$.

Case III: 
Let
$\alpha\in Q_1$ be such that $\alpha\beta\neq 0$ for a unique arrow
$\beta$ and $\alpha\beta \in\soc (A)$. Consider $\gamma\beta$ for $\gamma \neq \alpha$, if $\gamma\beta=0,$ we can set $\pi(\alpha)=\beta$. If $\gamma\beta \neq 0$, then there exist a path $p$ and $c \in \kk^*$ such that $p
\gamma\beta-c\alpha\beta=0$, so there is $\beta_2$ such that $(p\gamma-c\alpha)\beta_2\neq 0$. Since $\alpha\beta_2=0$ by assumption $p\gamma\beta_2\neq 0$, so $p$ is a path of length $0$ and we can set $\pi(\alpha)=\beta$, $\pi(\gamma)=\beta_2$.

Now $\pi$ is defined on all $Q_1$ and clearly it is injective
($\pi(x)\neq\pi(y)$ for $x\neq y$ by stably biserial condition if both $x,y$
belong to case I, otherwise $\pi(x)\neq\pi(y)$ by construction).
Then, indeed, $\pi$ is a permutation and it has the following
properties:

  \begin{enumerate}

  \item $\alpha\pi(\alpha)\neq 0$.         \quad\quad\quad (1)

  \item If $\beta\neq \pi(\alpha)$, then $\alpha\beta \in \soc(A)$. \quad\quad\quad (2)

  \end{enumerate}

For any $\alpha\in Q_1$ let $\langle \pi\rangle
\alpha=(\alpha=\alpha_1,\alpha_2,\dots,\alpha_{n_{\alpha}}) $. We define $\alpha_i$ for all natural
$i$ by the condition $\alpha_{i+n_{\alpha}}=\alpha_i$ and find maximal
integer $k_\alpha$ with
$\alpha_1\alpha_2\dots\alpha_{k_\alpha}\neq 0$. Note that $k_\alpha>1$ by
$(1)$, and therefore
 $\alpha_1\alpha_2\dots\alpha_{k_\alpha}\beta=0$ for $\beta\neq\alpha_{k_\alpha+1}$ as well (by $(2)$), i.e. $p_{\alpha}=\alpha_1\alpha_2\dots\alpha_{k_\alpha}\in \scl(A)$.  Actually $p_{\alpha}\in e_{s(\alpha)}Ae_{s(\alpha)}$ by symmetry. Let us define $sc(\alpha)=\alpha_1\dots\alpha_{k_\alpha}$.

\begin{lem}
 \begin{enumerate}
 \item For each $\alpha\in Q_1$ we have $k_{\alpha}=n_{\alpha}m_{\alpha}$ for some integer $m_{\alpha}$.
 \item If $\alpha, \beta\in Q_1$ lie on the common cycle of $\pi$, then $k_{\alpha}=k_{\beta}$ (and $m_{\alpha}=m_{\beta}$).
  \item If $\alpha, \beta\in Q_1$ with $s(\alpha)=s(\beta)$, then $sc(\alpha)=c_{\alpha,\beta}\cdot sc(\beta)$ for some $c_{\alpha,\beta}\in \kk^*$.
  \end{enumerate}
\end{lem}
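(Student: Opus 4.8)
The plan is to prove the three assertions in the order (3), (2), (1), since the later parts reuse the earlier ones. For (3) I would argue directly from self-injectivity. As $A$ is symmetric, the socle of the indecomposable projective $e_iA$ with $i=s(\alpha)=s(\beta)$ is simple and concentrated at vertex $i$, so $\soc(e_iA)=\soc(e_iAe_i)$ is one-dimensional. As noted just before the lemma, both $sc(\alpha)$ and $sc(\beta)$ lie in $e_iAe_i$, and by construction they are nonzero elements of $\scl(A)$; hence both span the same one-dimensional space and $sc(\alpha)=c_{\alpha,\beta}\,sc(\beta)$ for a unique $c_{\alpha,\beta}\in\kk^*$.

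For (2) I would exploit that for a symmetric algebra the left and right socles coincide, so $\soc(A)$ is annihilated by $\rad(A)$ on both sides; in particular $\rad(A)\cdot\soc(A)=0$. Writing $\langle\pi\rangle\alpha=(\alpha_1,\dots,\alpha_{n_\alpha})$, the element $sc(\alpha_{j+1})=\alpha_{j+1}\cdots\alpha_{j+k_{\alpha_{j+1}}}$ lies in $\soc(A)$ while $\alpha_j\in\rad(A)$, so $\alpha_j\cdot sc(\alpha_{j+1})=0$. But this product is exactly the path $\alpha_j\alpha_{j+1}\cdots\alpha_{j+k_{\alpha_{j+1}}}$ of length $1+k_{\alpha_{j+1}}$ obtained by following the $\pi$-orbit from $\alpha_j$, so its vanishing forces $k_{\alpha_j}\le k_{\alpha_{j+1}}$ by maximality of $k_{\alpha_j}$. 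Running $j$ around the cycle gives $k_{\alpha_1}\le k_{\alpha_2}\le\cdots\le k_{\alpha_{n_\alpha}}\le k_{\alpha_1}$, whence all $k_{\alpha_j}$ agree (and, once (1) is known, so do the $m_{\alpha_j}$).

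For (1) I would first split the cycles into two families, using (2). If $\alpha\pi(\alpha)\notin\soc(A)$ (the Case I situation) then property (2) shows the only nonzero way to extend $\alpha_1\alpha_2$ is by $\alpha_3=\pi(\alpha_2)$, so $\alpha_1\alpha_2\alpha_3\ne0$ and $k_\alpha\ge3$; by (2) the same holds for every arrow of the cycle. Here I would use the symmetrizing form $\lambda$, which satisfies $\lambda(xy)=\lambda(yx)$ and is nonzero on each socle generator, so $\lambda(sc(\alpha))\ne0$. Cyclicity gives $\lambda(\alpha_1\cdots\alpha_k)=\lambda(\alpha_2\cdots\alpha_k\,\alpha_1)$ with $k=k_\alpha$. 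If $\alpha_1\ne\pi(\alpha_k)=\alpha_{k+1}$, then property (2) yields $\alpha_k\alpha_1\in\soc(A)$, so $\alpha_2\cdots\alpha_k\alpha_1=(\alpha_2\cdots\alpha_{k-1})(\alpha_k\alpha_1)\in\rad(A)\cdot\soc(A)=0$ (using $k\ge3$), contradicting $\lambda(sc(\alpha))\ne0$. Hence $\alpha_{k+1}=\alpha_1$, i.e. $k_\alpha+1\equiv1\pmod{n_\alpha}$ and $n_\alpha\mid k_\alpha$ with $m_\alpha=k_\alpha/n_\alpha$. The remaining cycles are those with $\alpha\pi(\alpha)\in\soc(A)$; there $sc(\alpha)=\alpha\pi(\alpha)$ is already in the socle and $k_\alpha=2$, and by (2) every arrow of such a cycle is of this (Case II or III) type. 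For these arrows $\pi$ was defined in Cases II–III as a product of transpositions and fixed points, so $n_\alpha\in\{1,2\}$ and $n_\alpha\mid2=k_\alpha$.

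The main obstacle is precisely this degenerate family. The cyclicity argument needs $k_\alpha\ge3$ to push the offending arrow into $\rad(A)\cdot\soc(A)$ and gives no information when $k_\alpha=2$, so one must fall back on the explicit definition of $\pi$. The subtle point is that properties (1)–(2) alone do \emph{not} determine $\pi$: a ``square'' formed by two pairs of double arrows admits both a single $4$-cycle and a pair of transpositions satisfying (1)–(2), and only the latter satisfies $n_\alpha\mid k_\alpha$. Thus the conclusion genuinely depends on the specific pairing fixed in the construction of $\pi$, and the bookkeeping that a $k_\alpha=2$ cycle never mixes with a $k_\alpha\ge3$ cycle is exactly what part (2) provides, letting me treat the two families separately.
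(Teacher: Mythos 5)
Your proof is correct, and for parts (1) and (3) it is essentially the paper's own argument: (3) is the one-dimensionality of $\soc(e_{s(\alpha)}A)$, and the non-degenerate case of (1) is exactly the paper's rotation trick (the symmetrizing form forces $\alpha_2\cdots\alpha_k\alpha_1\neq 0$, which is incompatible with $\alpha_k\alpha_1\in\soc(A)$ once $k\geq 3$, so $\alpha_{k+1}=\alpha_1$). The genuine difference is part (2): the paper deduces it \emph{from} part (1) together with the fact that a socle path cannot be a proper subpath of a nonzero path, whereas you prove it directly, via $\alpha_j\cdot sc(\alpha_{j+1})\in\rad(A)\cdot\soc(A)=0$ and the chain of inequalities around the cycle. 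This reordering is a real improvement in the logical architecture: in the degenerate case $k_\alpha=2$ of (1) you can legitimately invoke (2) to conclude that the whole $\pi$-cycle consists of Case II/III arrows, something the paper cannot formally do since its (2) rests on (1); the paper instead simply asserts ``$n_\alpha=2=k_\alpha$'' (which is even slightly off, since Case II fixed points give $n_\alpha=1$; your $n_\alpha\in\{1,2\}$ is the correct statement). One caveat, shared with the paper: Case III only defines $\pi(\alpha)=\beta$, not $\pi(\beta)=\alpha$, so ``$\pi$ is a product of transpositions and fixed points'' needs one more sentence -- by symmetry $\beta\alpha\neq 0$, by your (2) the arrow $\beta$ is again degenerate, and then either Case III uniqueness or the Case II transposition structure plus injectivity of $\pi$ forces $\pi(\beta)=\alpha$. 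With that sentence added your proof is complete, and your closing observation is apt: the ``square'' of double arrows does occur for selfinjective non-symmetric special biserial algebras, so the divisibility in (1) genuinely depends both on symmetry and on the particular pairing built into the construction of $\pi$.
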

 We say that $m_{\alpha}$ is the {\it multiplicity of the cycle} $\langle \pi\rangle
\alpha$.

\begin{proof} Put $k=k_{\alpha}$

1. Since  $\alpha_1\alpha_2\alpha_3\dots\alpha_k \in sc(A)$, we have
$\alpha_2\alpha_3\dots\alpha_k\alpha_1\neq 0$. If $k>2$ then
$\alpha_k\alpha_1\notin \soc(A)$. Therefore, by $(2)$,
$\alpha_{k+1}=\alpha_1$ as required. If $k=2$, i.e.
$\alpha_1\alpha_2\in \scl(A)$, then $\alpha$ belongs to Case II or
to Case III and we have $n_{\alpha}=2=k_{\alpha}$.

2. This follows from 1 and from the fact that a socle path cannot be
a subpath of another socle path.

3. It follows from the fact that $\soc (e_{s(\alpha)}A)$ is
one-dimensional.
\end{proof}

Let us call a non-zero path $\beta_1\dots\beta_k$ {\it admissible} if
$\pi(\beta_i)=\beta_{i+1}$ for all $i$. In particular, for any $v\in
Q_0$ we have an admissible path $sc(\alpha)\in\scl(e_vA)$ with
$s(\alpha)=e_v$. So it follows from $(2)$ that any non-zero
non-admissible path is of length $2$ and is equal (in $A$) to an
admissible socle path: $\beta\gamma=k\cdot sc(\alpha)$ for some
$\alpha\in Q_1, k\in \kk^*$. Such an equality we call a {\it socle relation}.
Note that replacing in any socle relation right-hand side by $0$ we
obtain a standard description of SSB-algebra (up to coefficients in
the relations of the form $sc(\alpha)=k\cdot sc(\beta)$, $k\in \kk^*$ but these
coefficients can be eliminated for symmetric algebras).

\begin{lem} In the notations of the previous lemma, we can assume that
$c_{\alpha,\beta}=1$  for all $\alpha, \beta\in Q_1$ with
$s(\alpha)=s(\beta)$ (i.e. $sc(\alpha)=sc(\beta)$).
\end{lem}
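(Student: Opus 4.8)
The plan is to normalize the presentation by rescaling the arrows, using the symmetrizing form of $A$ to make the rescaling completely independent on each $\pi$-cycle. Since $A$ is symmetric, fix a symmetrizing linear form $\varphi\colon A\to\kk$, so that $\varphi(ab)=\varphi(ba)$ and the pairing $(a,b)\mapsto\varphi(ab)$ is nondegenerate. The first observation I would record is that $\varphi(sc(\alpha))\neq 0$ for every $\alpha\in Q_1$: by the previous lemma $sc(\alpha)$ spans the one-dimensional socle $\soc(e_{s(\alpha)}A)$ and lies in $e_{s(\alpha)}Ae_{s(\alpha)}$, so $sc(\alpha)\cdot A\subseteq\kk\, sc(\alpha)$, and nondegeneracy of $\varphi$ then forces $\varphi(sc(\alpha))\neq0$. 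This already reduces the problem to a single scalar per vertex: applying $\varphi$ to the relation $sc(\alpha)=c_{\alpha,\beta}sc(\beta)$ gives $\varphi(sc(\alpha))=c_{\alpha,\beta}\varphi(sc(\beta))$, so it suffices to arrange $\varphi(sc(\alpha))=\varphi(sc(\beta))$ whenever $s(\alpha)=s(\beta)$.

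The key step -- the one that removes any global consistency obstruction -- is that $\varphi\circ sc$ is constant along each cycle of $\pi$. Writing $\langle\pi\rangle\alpha=(\alpha=\alpha_1,\dots,\alpha_{n_\alpha})$ and $sc(\alpha_i)=\alpha_i\alpha_{i+1}\cdots\alpha_{i+n_\alpha m_\alpha-1}$ with indices read cyclically, the trace property of $\varphi$ yields
\[
\varphi(sc(\alpha_i))=\varphi\bigl(\alpha_i\cdot\alpha_{i+1}\cdots\alpha_{i+n_\alpha m_\alpha-1}\bigr)=\varphi\bigl(\alpha_{i+1}\cdots\alpha_{i+n_\alpha m_\alpha-1}\cdot\alpha_i\bigr)=\varphi(sc(\alpha_{i+1})),
\]
the last equality because $\alpha_{i+n_\alpha m_\alpha}=\alpha_i$, so the cyclically shifted product is exactly $sc(\alpha_{i+1})$. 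Hence there is a well-defined nonzero scalar $\varphi_C:=\varphi(sc(\alpha))$ attached to each cycle $C=\langle\pi\rangle\alpha$, not depending on which arrow of $C$ is chosen.

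With this the normalization is immediate. Because the cycles of $\pi$ partition $Q_1$, I can rescale one cycle at a time: for each cycle $C$ of multiplicity $m_C$ pick a single arrow $\gamma_0\in C$ and replace it by $\lambda\gamma_0$, where $\lambda\in\kk^*$ is an $m_C$-th root of $\varphi_C^{-1}$ (which exists since $\kk$ is algebraically closed), leaving all other arrows fixed. Rescaling generators by nonzero scalars changes neither which products vanish, lie in the socle, or are admissible, so $\pi$, its cycles and their multiplicities are unaffected; and since each arrow of $C$ occurs exactly $m_C$ times in $sc(\alpha)$ for $\alpha\in C$, this multiplies $sc(\alpha)$ by $\lambda^{m_C}=\varphi_C^{-1}$. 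After carrying this out for every cycle we obtain $\varphi(sc(\alpha))=1$ for all $\alpha$, and then $\varphi(sc(\alpha))=c_{\alpha,\beta}\varphi(sc(\beta))$ forces $c_{\alpha,\beta}=1$, i.e.\ $sc(\alpha)=sc(\beta)$, whenever $s(\alpha)=s(\beta)$.

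The obstacle one might expect is a cocycle-type compatibility condition at vertices where two cycles meet, or at a vertex that a single cycle visits twice: a naive approach would propagate the scalings along a spanning tree of the cycle-incidence graph and then have to check that loops and independent cycles are consistent. The cycle-constancy of $\varphi\circ sc$ is precisely what makes this worry vanish, since $\varphi(sc(\alpha))$ depends only on the cycle of $\alpha$ and the per-cycle rescalings are therefore independent. In writing this up I would devote the care to the cyclicity computation above and to the verification that $\varphi$ is nonzero on each one-dimensional socle; the remainder is routine bookkeeping.
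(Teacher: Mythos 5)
Your proof is correct and is essentially the same as the paper's: both exploit the trace property of the symmetrizing form to show $\varphi\circ sc$ is constant on each $\pi$-cycle, then rescale one arrow per cycle by an $m_C$-th root (using that $\kk$ is algebraically closed) to normalize all socle values to $1$. Your write-up is somewhat more careful in spelling out why $\varphi(sc(\alpha))\neq 0$ and why the cyclic shift of $sc(\alpha_i)$ equals $sc(\alpha_{i+1})$, points the paper leaves implicit, but the argument is the same.
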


\begin{proof} Let $\varphi_{A}(x)=\langle x,1 \rangle$ be induced by the symmetric form $\langle -,- \rangle$ on $A$, put
$c_{\alpha}=\varphi_{A}(sc(\alpha))$. As the form is symmetric,
for $\alpha,\beta$ belonging to the same $\pi$-orbit
$c_{\alpha}=c_{\beta}$, it follows that $c_{\alpha,\beta}=1$ for
such $\alpha,\beta$. Now let $\{\alpha_1,\dots,\alpha_k\}$ be a set
of representatives of $\pi$-orbits. Put
$\alpha_i'=\frac{\alpha_i}{c_i}$, where
$c_i^{m_{\alpha_i}}=c_{\alpha_i}$. Then, replacing 
$\alpha_i$ by  $\alpha_i'$, $1\leq i \leq k$,  for any new socle path $sc(\alpha)'$ we obtain
$\varphi_{A}(sc(\alpha)')=\varphi_{A}(sc(\alpha))/c_i^{m_{\alpha_i}}=1$, where $i$ is defined by $\alpha_i \in \langle \pi \rangle \alpha$.
Therefore, we obtain that if $p_1=kp_2$ for socle paths and $k\neq
0$, then $k=1$ as required. Clearly we have not changed any
relations except for, possibly, changing non-zero coefficients in
socle relations.
\end{proof}

\begin{lem}\label{socrelations} Let $A=\kk Q/I$ be a stably biserial algebra with permutation 
$\pi$, multiplicities $m$ and ideal $I$ generated by the
following relations: 

\begin{enumerate} 
\item $sc(\alpha)-sc(\beta)$ for each $(\alpha,\beta)$ with $s(\alpha)=s(\beta)$. 

\item $sc(\alpha)\alpha$, $\pi^{-1}(\alpha)sc(\alpha)$ for each 
vertex $s(\alpha)$ with one incoming and one outgoing arrow. 
\item $\beta\gamma-l_{\beta,\gamma}sc(\beta)$ for all $\beta\gamma \in Q_1$, $\gamma\neq\pi(\beta)$ ($l_{\beta,\gamma}\in \kk$).

\end{enumerate} 

Consider the ideal $I_1$ obtained from $I$ by replacing generators of the form 
$\beta\gamma-l_{\beta,\gamma}sc(\beta)$ 
by $\beta\gamma$ for $\rm{char}k\neq 2$. If $\rm{char}k= 2$ we make this replacement only in the cases with 
$\beta\neq\gamma$. Then 
$\kk Q/I_1\simeq A$ 

\end{lem}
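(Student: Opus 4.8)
The plan is to exhibit a $\kk$-algebra isomorphism $\Psi\colon \kk Q/I_1 \to A$ that fixes the vertices and acts as a change of variables on the arrows, $\Psi(\alpha)=\alpha'$ for suitable $\alpha'\in e_{s(\alpha)}Ae_{e(\alpha)}$ congruent to $\alpha$ modulo higher terms. Since both $\kk Q/I_1$ and $A=\kk Q/I$ have the set of admissible paths, taken modulo the top--socle identifications coming from the relations of type~(1), as a $\kk$-basis --- in $A$ the socle relations of type~(3) merely rewrite the non-admissible length-$2$ paths as scalar multiples of admissible socle paths by property~$(2)$ of $\pi$, and add no basis vectors --- the two algebras have the same dimension. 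Hence it suffices to produce elements $\alpha'$ satisfying all the defining relations of $I_1$ and generating $A$; surjectivity together with the dimension count then forces $\Psi$ to be an isomorphism.

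First I would analyse a single socle relation $\beta\gamma=l_{\beta,\gamma}\,sc(\beta)$ with $\gamma\neq\pi(\beta)$ and $\beta\gamma\neq 0$. Since $\beta\gamma$ and $sc(\beta)$ are both nonzero socle elements of $e_{s(\beta)}Ae_{s(\beta)}$, the path $\beta\gamma$ is a cycle at $s(\beta)$, so $e(\gamma)=s(\beta)$. Writing $sc(\beta)=\beta w$ with $w=\pi(\beta)\pi^{2}(\beta)\cdots\pi^{k_\beta-1}(\beta)$, the path $w$ runs from $e(\beta)=s(\gamma)$ to $s(\beta)=e(\gamma)$, so $w$ is parallel to $\gamma$. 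Setting $\gamma':=\gamma-l_{\beta,\gamma}\,w$ gives $\beta\gamma'=\beta\gamma-l_{\beta,\gamma}\,\beta w=0$ in $A$, which is exactly the corresponding zero relation of $I_1$. When $\gamma\neq\beta$ this succeeds in any characteristic: if $k_\beta=2$ then $w=\pi(\beta)$ is a single arrow parallel to $\gamma$, so the substitution is a linear change of basis on the (at most two-dimensional) space of parallel arrows, while if $k_\beta>2$ the correction $w$ lies in $\rad^{2}(A)$, so $\gamma'\equiv\gamma\pmod{\rad^{2}}$ and the generating property is immediate.

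The loop case $\gamma=\beta$ is where the characteristic hypothesis enters, and I expect it to be the crux. Here $\beta$ is a loop and $\beta^{2}=l\,sc(\beta)=l\,\beta w$. Using $s(\beta)=s(\pi(\beta))=e(\beta)$ together with the normalisation $sc(\beta)=sc(\pi(\beta))$ from the preceding lemma, one gets $\beta w=w\beta=sc(\beta)$, whence for $\beta':=\beta-c\,w$,
\[
(\beta')^{2}=\beta^{2}-c(\beta w+w\beta)+c^{2}w^{2}=(l-2c)\,sc(\beta)+c^{2}w^{2},
\]
where $w^{2}$ lies in a strictly deeper layer of the socle filtration and in fact vanishes when $k_\beta>2$. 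Thus $(\beta')^{2}=0$ can be solved by $c=l/2$ precisely when $\mathrm{char}\,\kk\neq 2$; in characteristic $2$ the coefficient $l-2c=l$ cannot be removed, which is exactly why the socle relations with $\beta=\gamma$ are retained in $I_1$ in that case.

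The main obstacle is to perform all these substitutions simultaneously and to check that the remaining relations of $I$ are transported to the corresponding relations of $I_1$. I would group the arrows into parallel classes and record that, by the stably biserial condition and property~$(2)$, each arrow occurs in at most one socle relation as a right factor and at most one as a left factor; the substitutions on each parallel class then assemble into a single invertible linear map (unitriangular in path length on a class containing a higher-order correction), so $\Psi$ is a well-defined algebra map that is bijective on $\rad/\rad^{2}$ and hence surjective. It then remains to verify that the type-(1) relations $sc(\alpha')-sc(\beta')$ and the type-(2) uniserial zero relations still hold after substitution: every error term produced by a replacement $\gamma'=\gamma-l w$ is a product involving the socle element $w$ or a non-admissible continuation of it, hence lies in $\soc(A)$ and is either absorbed into the socle path or annihilated by a further radical factor. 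Tracking these socle-valued corrections --- and confirming that they neither destroy the normalisation $sc(\alpha')=sc(\beta')$ nor the invertibility of the change of arrows --- is the only genuinely delicate bookkeeping; once it is in place, $\Psi$ is the desired isomorphism $\kk Q/I_1\simeq A$.
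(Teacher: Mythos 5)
Your basic substitution is the same as the paper's --- replace the right factor $\gamma$ of a socle relation $\beta\gamma=l\,sc(\beta)$ by $\gamma-l\,w$ where $sc(\beta)=\beta w$, with the $\beta=\gamma$ loop case corrected by $\beta-(l/2)w$, which is where ${\rm char}\,\kk\neq 2$ enters --- but the step you set aside as ``delicate bookkeeping'' is not bookkeeping, and done simultaneously as you propose it actually fails. The problem is the reciprocal-pair case $\pi(\beta)\neq\gamma$ \emph{and} $\pi(\gamma)\neq\beta$, i.e.\ both $\beta\gamma$ and $\gamma\beta$ are non-admissible cycles. Since $A$ is symmetric, $\varphi_A(\gamma\beta)=\varphi_A(\beta\gamma)=l_{\beta,\gamma}\varphi_A(sc(\beta))$, so $l_{\beta,\gamma}\neq 0$ forces $l_{\gamma,\beta}\neq 0$; this situation is unavoidable, and your scheme then corrects \emph{both} arrows: $\Psi(\gamma)=\gamma-l\,p$ with $sc(\beta)=\beta p$, and $\Psi(\beta)=\beta-l'q$ with $sc(\gamma)=\gamma q$, where $l=l_{\beta,\gamma}$, $l'=l_{\gamma,\beta}$. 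Then
\[
\Psi(\beta)\Psi(\gamma)=(\beta\gamma-l\,\beta p)-l'\,q\gamma+ll'\,qp=-l'\,q\gamma+ll'\,qp,
\]
and $q\gamma=\pi(\gamma)\cdots\pi^{-1}(\gamma)\gamma=sc(\pi(\gamma))$ is a cyclic rotation of $sc(\gamma)$, hence a \emph{non-zero} socle element (again by symmetry), while $qp=0$ as soon as $q$ has length at least two (its last arrow composed with the first arrow of $p$ already lies in $\soc(A)$). So $\Psi(\beta)\Psi(\gamma)=-l'\,sc(\pi(\gamma))\neq 0$: the images of the arrows do not satisfy the generator $\beta\gamma$ of $I_1$, and $\Psi$ is not even well defined. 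The cross terms you expected to be ``absorbed into the socle path'' are exactly non-vanishing socle elements; your intuition is correct only in the case $\pi(\gamma)=\beta$, where $\gamma\beta\notin\soc(A)$ and only one correction is in play.

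This is precisely what the paper's two devices, both absent from your proposal, are for: (i) the symmetric-form identity $\gamma\beta=l\,p\beta$, which shows that the \emph{single} substitution $\gamma\mapsto\gamma-lp$, leaving $\beta$ untouched, kills the two reciprocal relations $\beta\gamma$ and $\gamma\beta$ at once; and (ii) induction on the number of non-zero coefficients $l_{\beta,\gamma}$, so that only one substitution is performed at a time and the surviving relations can be re-inspected after each step --- in particular in the degenerate configurations $|Q_0|=1,2$, where the correcting path $p$ is a single arrow and a careless substitution can create new non-zero coefficients. Two further points you assert without proof also need attention: the dimension equality $\dim\kk Q/I_1=\dim A$ (a genuine claim about $A$, not a formality, though it can be avoided entirely by arguing inside $A$ as the paper does); and the two-loops-at-one-vertex case $k_\beta=2$ of your loop computation, where $w$ is an arrow and $w^2$ is not automatically zero --- there one must invoke the specific choice of $\pi$ made in the paper's construction, which arranges $w^2=0$.
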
 

\begin{proof} We are going to prove this lemma by induction on the number of 
non-zero $l_{\beta,\gamma}$. Suppose that $l_{\beta_0,\gamma_0}\neq 0$. Put $sc(\beta_0)=\beta_0 p$. Then we 
have $\beta_0(\gamma_0-l_{\beta_{0},\gamma_{0}}p)=0$. Let us 
consider two cases:

1. Suppose that $\beta_0\neq\gamma_0$. Let us show that the substitution $\gamma_0\to\gamma_1$, $\gamma_1=\gamma_0-l_{\beta_{0},\gamma_{0}}p$ 
decreases the number of non-zero $l_{\beta,\gamma}$ (preserving all other relations). 

Looking at the values of
$\varphi_{A}$ we get

$$ 
\varphi_{A}(\gamma_0\beta_0)=\varphi_{A}(\beta_0\gamma_0)=\varphi_{A}(l_{\beta_0,\gamma_0}\beta_0p)=\varphi_{A}(l_{\beta_0,\gamma_0}p\beta_0)\neq 
0. 
$$ 

Let us consider two cases. 

Case I. $\pi(\gamma_0)\neq 
\beta_0$. Then $\gamma_0\beta_0\in sc(A)$, this implies that $\gamma_0\beta_0=l_{\beta_0,\gamma_0}p\beta_0$. 
So in this case we have $\beta_0\gamma_1=0$ and also $\gamma_1\beta_0=0$. 

If $\pi^{-1}(\gamma_0)p=p\pi(\gamma_0)=0$, then 
the substitution $\gamma_0\rightarrow \gamma_1$ clearly does not change any other 
relations and we are done.

If $\pi^{-1}(\gamma_0)p\neq 0$ or $p\pi(\gamma_0)\neq 0$ then $p$ is 
an arrow with $s(p)=s(\gamma_0),e(p)=e(\gamma_0)$ and 
$\pi^{-1}(\gamma_0)$ is an arrow with 
$s(\pi^{-1}(\gamma_0))=s(\beta_0),e(\pi^{-1}(\gamma_0))=e(\beta_0)$ 
(as $\pi^{-1}(\gamma_0)p\in \soc(A)$) and we have $|Q_0|=2$ or 
$|Q_0|=1$. If $|Q_0|=2$, then clearly, $\pi^{-1}(\gamma_0)p\neq 0$ 
implies $p\pi(\gamma_0)\neq 0$ and visa versa. Then the substitution of 
$\gamma_0$ for $\gamma_1$ does not create any new non-zero 
$l_{\beta,\gamma}$. If $|Q_0|=1$ and $Q$ has two loops 
$\alpha,\beta$, with $\pi(\alpha)=\alpha, \pi(\beta)=\beta$, and say 
$\alpha$ plays the role of $\gamma_0$, then 
$\alpha'=\alpha-l_{\alpha,\beta}p$ satisfies the desired relations. 
A coefficient can appear in the relation $sc(\alpha)=c \cdot sc(\beta)$, but 
we can make it equal to 1 as before. Thus, in this case we have changed exactly two relations, obtaining $l_{\beta_0,\gamma_1}=l_{\gamma_1, \beta_0}=0$. 

Case II. $\pi(\gamma_0)=\beta_0$. Then we have $\gamma_0\beta_0\notin sc(A)$ (else we have $\pi(\beta_0)=\gamma_0$ as well). Then $\gamma_1\beta_0=\gamma_0\beta_0-l_{\beta_0,\gamma_0}p\beta_0$, with 
$l_{\beta_0,\gamma_0}p\beta_0\in soc(A)$, and therefore any other path, containing $\gamma_1\beta_0$ is equal to the corresponding path after the substitution $\gamma_1\to\gamma_0$. Also we have $\pi^{-1}(\gamma_0)\gamma_1=\pi^{-1}(\gamma_0)\gamma_0-l_{\beta_0,\gamma_0}\pi^{-1}(\gamma_0)p=\pi^{-1}(\gamma_0)\gamma_0$, as 
$\pi^{-1}(\gamma_0)p$ is of length at least $3$ and $p\neq\gamma_0 p'$ for any path $p'$. By the same reasons $\gamma_1\delta=\gamma_0\delta$ 
where $\delta\neq\beta$, $s(\delta)=s(\beta)$. Thus, in this case we have changed exactly one relation, obtaining $l_{\beta_0,\gamma_1}=0$.

2. Suppose $\rm{char}\kk\neq 
2$ and $\beta_0=\gamma_0$, $|Q_0|\neq 1$. In this case 
$s(\beta_0)=e(\beta_0)$, $p$ is a path of length more than $1$ (else 
we have two loops at one vertex), $\beta_0p=p\beta_0\in sc(A)$. Put 
$\beta_0'=\beta_0-l_{\beta_0,\gamma_0}p/2$. Then 
$(\beta_0')^2=(\beta_0-p/2)^2=\beta_0^2-l_{\beta_0,\gamma_0}\beta_0p-l_{\beta_0,\gamma_0}p\beta_0+0=0$. 
As $\alpha p=p \alpha=0$ for all arrows $\alpha\neq\beta_0$ ($p$ 
is not an arrow), all other relations are preserved. 

If $|Q_0|=1$ and $p$ is a path of length more than $1$, the proof 
goes similar. If $p$ is a path of length $1$, by construction of $\pi$ we have
$p^2=0$ and lemma also holds. 
\end{proof}

By Lemma \ref{socrelations} and induction on the number of
non-zero $l_{\beta,\gamma}$ we get the following theorem:

\begin{thm}
1. Any symmetric stably biserial algebra over an algebraically
closed field $\kk$ with $\rm{char} \kk\neq 2$ is isomorphic to a special
biserial algebra.

2. Consider a standard description of a symmetric special biserial
algebra $A=\kk Q/I$ and any set of loops $\{\alpha_1,\dots,\alpha_k\} $
in $Q_1$, where $\pi(\alpha_i)\neq\alpha_i$ for all $i$ (so that
$\alpha_i^2=0$ in $A$), consider a set $\{c_{\alpha_1},\dots,c_{\alpha_k}\} $, $c_{\alpha_i}\in \kk^*$. Replacing in the standard set of relations
$\alpha_i^2$ by $\alpha_i^2-c_{\alpha_i}sc(\alpha_i)$ we obtain a new algebra
$A'$ and all stably biserial algebras can be obtained in this way.

\end{thm}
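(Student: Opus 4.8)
The plan is to read both statements off Lemma \ref{socrelations}, which already performs the essential work of clearing the off-diagonal socle relations. By the analysis preceding that lemma, every symmetric stably biserial algebra $A$ has a presentation $\kk Q/I$ in which $I$ is generated by the three families listed there: the socle-path identifications $sc(\alpha)-sc(\beta)$ for $s(\alpha)=s(\beta)$, the boundary relations $sc(\alpha)\alpha$ and $\pi^{-1}(\alpha)sc(\alpha)$ at vertices with a single incoming and outgoing arrow, and the socle relations $\beta\gamma-l_{\beta,\gamma}sc(\beta)$ for $\gamma\neq\pi(\beta)$. The point is that the first two families are verbatim the relations (2) and (3) of the standard description of a symmetric special biserial algebra, so everything reduces to controlling the scalars $l_{\beta,\gamma}$.

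For part 1, I would simply invoke Lemma \ref{socrelations} with $\mathrm{char}\,\kk\neq 2$, obtaining an isomorphism $A\simeq\kk Q/I_1$ in which every generator $\beta\gamma-l_{\beta,\gamma}sc(\beta)$ has been replaced by $\beta\gamma$. The remaining observation is purely formal: the relations $\beta\gamma$ with $\gamma\neq\pi(\beta)$ are precisely family (1) of the standard SSB description (the zero relations), while families (1) and (2) of Lemma \ref{socrelations} supply families (2) and (3) of that description. Thus $\kk Q/I_1$ is literally the symmetric special biserial algebra attached to the data $(Q,\pi,m)$, and $A$ is isomorphic to it, with no further computation.

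For part 2, I would argue both directions. In the forward direction I apply Lemma \ref{socrelations} in arbitrary characteristic to eliminate every socle relation with $\beta\neq\gamma$; the only generators that can survive are of the form $\alpha^2-l_{\alpha,\alpha}sc(\alpha)$ with $\alpha$ a loop satisfying $\pi(\alpha)\neq\alpha$. Setting the surviving $l_{\alpha,\alpha}$ to $0$ produces a standard SSB algebra, and recording the loops with $l_{\alpha,\alpha}\neq 0$ together with $c_{\alpha}:=l_{\alpha,\alpha}$ exhibits $A$ in exactly the deformed form claimed. For the converse, given an SSB algebra $A$ and a deformation $A'$ obtained by replacing $\alpha_i^2$ with $\alpha_i^2-c_{\alpha_i}sc(\alpha_i)$, I would verify stable biseriality directly against Definition \ref{stb}: since $sc(\alpha_i)\in\soc(A')$, the deformation alters the relations only by socle elements, so $A'$ and $A$ agree modulo socle; as conditions (b) and (c) of Definition \ref{stb} are phrased modulo $\alpha\rad(A)\beta+\soc(A)$ they are inherited from the (stably biserial) algebra $A$, while condition (a) is unchanged.

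The genuine difficulty, namely the isomorphisms clearing the $l_{\beta,\gamma}$, is entirely absorbed into Lemma \ref{socrelations}, so the theorem is short; the steps needing care are secondary. First, one must check that the two residual families of Lemma \ref{socrelations} match the standard SSB relations exactly, including the bookkeeping of which vertices carry one versus two incoming/outgoing arrows. The step I expect to be the main obstacle is the converse half of part 2: one must confirm that $A'$ remains symmetric and of the same dimension as $A$, so that it is an honest stably biserial algebra rather than a degeneration, which I would settle by showing that the admissible paths still form a basis once $\alpha_i^2$ is expressed through $sc(\alpha_i)$. Finally, the reason the two parts diverge is the characteristic-$2$ obstruction to removing a loop relation $\alpha^2=c\,sc(\alpha)$: it is exactly the failure of the substitution $\alpha\mapsto\alpha-\tfrac{c}{2}p$ (with $sc(\alpha)=\alpha p$) used in the second case of the proof of Lemma \ref{socrelations}, and this is what makes the deformations in part 2 nontrivial.
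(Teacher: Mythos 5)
Your proposal is correct and takes essentially the same route as the paper: the paper deduces both parts directly from Lemma \ref{socrelations} together with the induction on the number of non-zero $l_{\beta,\gamma}$, exactly as you do, after noting (as you do) that the preceding analysis puts any symmetric stably biserial algebra in the presentation required by that lemma. Your explicit verification that the deformed algebra $A'$ is itself symmetric stably biserial (same dimension, basis of admissible paths, conditions of Definition \ref{stb} checked modulo socle) is a sound filling-in of a step the paper leaves implicit.
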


\end{document}